\numberwithin{equation}{section}
\newtheorem{Theorem}{Theorem}[section]
\newtheorem{Lemma}[Theorem]{Lemma}
\newtheorem{Proposition}[Theorem]{Proposition}
 { \theoremstyle{definition}
\newtheorem{Definition}[Theorem]{Definition}
\newtheorem{Remark}[Theorem]{Remark} }
\begin{document}


\newcommand{\arXivNumber}{1601.07194}

\renewcommand{\thefootnote}{$\star$}

\renewcommand{\PaperNumber}{090}

\FirstPageHeading

\ShortArticleName{Multivariate Orthogonal Polynomials and Modif\/ied Moment Functionals}

\ArticleName{Multivariate Orthogonal Polynomials \\ and Modif\/ied Moment Functionals\footnote{This paper is a~contribution to the Special Issue on Orthogonal Polynomials, Special Functions and Applications. The full collection is available at \href{http://www.emis.de/journals/SIGMA/OPSFA2015.html}{http://www.emis.de/journals/SIGMA/OPSFA2015.html}}}

\Author{Antonia M.~DELGADO, Lidia FERN\'ANDEZ, Teresa E.~P\'EREZ and Miguel A.~PI\~NAR}
\AuthorNameForHeading{A.M.~Delgado, L.~Fern\'andez, T.E.~P\'erez and M.A.~Pi\~nar}
\Address{IEMath -- Math Institute and Department of Applied Mathematics, University of Granada,\\ 18071, Granada, Spain}
\Email{\href{mailto:amdelgado@ugr.es}{amdelgado@ugr.es}, \href{mailto:lidiafr@ugr.es}{lidiafr@ugr.es}, \href{mailto:tperez@ugr.es}{tperez@ugr.es}, \href{mailto:mpinar@ugr.es}{mpinar@ugr.es}}
\URLaddress{\url{http://www.ugr.es/local/goya/}}

\ArticleDates{Received January 28, 2016, in f\/inal form September 05, 2016; Published online September 10, 2016}

\Abstract{Multivariate orthogonal polynomials can be introduced by using a moment functional def\/ined on the linear space of polynomials in several variables with real coef\/f\/icients. We study the so-called Uvarov and Christof\/fel modif\/ications obtained by adding to the moment functional a f\/inite set of mass points, or by multiplying it times a polynomial of total degree $2$, respectively. Orthogonal polynomials associated with modif\/ied moment func\-tio\-nals will be studied, as well as the impact of the modif\/ication in useful properties of the orthogonal polynomials. Finally, some illustrative examples will be given.}

\Keywords{multivariate orthogonal polynomials; moment functionals; Christof\/fel modi\-f\/i\-ca\-tion; Uvarov modif\/ication; ball polynomials}

\Classification{33C50; 42C10}

\renewcommand{\thefootnote}{\arabic{footnote}}
\setcounter{footnote}{0}

\section{Introduction}

Using a moment functional approach as in \cite{Ma85, Ma88, Ma91}, one interesting problem in the theory of orthogonal polynomials in one and several variables is the study of the modif\/ications of a quasi-def\/inite moment functional $u$ def\/ined on $\Pi$, the linear space of polynomials with real coef\/f\/icients. In fact, there are many works devoted to this topic since modif\/ications of moment functionals are underlying some well known facts, such as \textit{quasi-orthogonality}, relations between \textit{adjacent families}, quadrature and cubature formulas, higher-order (partial) dif\/ferential equations, etc.

In the univariate case, given a quasi-def\/inite moment functional $u$ def\/ined on $\Pi$, the (basic) {\it Uvarov modification} is def\/ined by means of the addition of a Dirac delta on $a\in\mathbb{R}$,
\begin{gather*}
v=u + \lambda \delta_a, \qquad \textrm{such that} \quad \langle v, p(x)\rangle =
\langle u, p(x)\rangle + \lambda p(a), \qquad \lambda\neq 0.
\end{gather*}

Apparently, this modif\/ication was introduced by V.B.~Uvarov in 1969~\cite{Uv69}, who studied the case where a f\/inite number of mass points is added to a measure, and proved connection formulas for orthogonal polynomials with respect to the modif\/ied measure in terms of those with respect to the original one.

In some special cases of classical Laguerre and Jacobi measures, if the perturbations are given at the end points of the support of the measure, then the new polynomials are eigenfunctions of higher-order dif\/ferential operators with polynomial coef\/f\/icients and they are called \textit{Krall polynomials} (see, for instance, \cite{Z99} and the references therein).

In the multivariate case, the addition of Dirac masses to a multivariate measure was studied in \cite{DFPPX10} and \cite{FPPX10}. Moreover, Uvarov modif\/ication for disk polynomials was analysed in \cite{DFPP12}.

Besides Uvarov modif\/ications by means of Dirac masses at f\/inite discrete set of points, in the context of several variables it is possible to modify the moment functional by means of moment functionals def\/ined on lower dimensional manifolds such as curves, surfaces, etc. Recently, a~family of orthogonal polynomials with respect to such a Uvarov modif\/ication of the classical ball measure by means of a mass uniformly distributed over the sphere was introduced in~\cite{MP16}. The authors proved that, at least in the Legendre case, these multivariate orthogonal polynomials satisfy a fourth-order partial dif\/ferential equation, which constitutes a natural extension of Krall orthogonal polynomials~\cite{Kr80} to the multivariate case. In~\cite{AM3}, a~modif\/ication of a~moment functional by adding another moment functional def\/ined on a~curve is presented, and a Christof\/fel formula built up in terms of a Fredholm integral equation is discussed. As far as we know, a general theory about Uvarov modif\/ications by means of moment functionals def\/ined on lower dimensional manifolds remains as an open problem.

Following \cite{Z97}, the univariate (basic) {\it Christoffel modification} is given by the multiplication of a moment functional $u$ times a~polynomial of degree~$1$, usually $x-a$, $a\in\mathbb{R}$,
\begin{gather*}
v=(x-a) u, \quad \textrm{acting as} \quad \langle v, p(x)\rangle = \langle u, (x-a) p(x)\rangle.
\end{gather*}
This type of transformations were f\/irst considered by E.B.~Christof\/fel in 1858 \cite{Ch1858} within the framework of Gaussian quadrature theory. Nowadays, Christof\/fel formulas are classical results in the theory of orthogonal polynomials, and they are presented in many general references (e.g.,~\cite{Ch78, Sz78}).

Christof\/fel modif\/ication is characterized by the {\it linear relations} that both families, modif\/ied and non-modif\/ied orthogonal polynomials, satisfy. They are called \textit{connection formulas}. It is well known that some families of classical orthogonal polynomials can be expressed as linear combinations of polynomials of the same family for dif\/ferent values of their parameters, the so-called \emph{relations between adjacent families} (e.g., see formulas in Chapter~22 in~\cite{AS72} for Jacobi polynomials, or~(5.1.13) in~\cite{Sz78} for Laguerre polynomials). The study of such type of linear combinations is also related with the concept of \emph{quasi-orthogonality} introduced by M.~Riesz in 1921 (see \cite[p.~64]{Ch78}) as the basis of his analysis of the moment problem, and it is related with quadrature formulas based on the zeros of orthogonal polynomials.

The extension of this kind of results to the multivariate case is not always possible. Gaussian cubature formulas of degree $2n-1$ were characterized by Mysovskikh~\cite{M81} in terms of the number of common zeros of the multivariate orthogonal polynomials. However, these formulas only exist in
very special cases and the case of degree $2n-2$ becomes interesting. Here, linear combinations of multivariate orthogonal polynomials play an important role, as it can be seen for instance in \cite{BSX95,SX94,X92,X94}.

To our best knowledge, one of the f\/irst studies about Christof\/fel modif\/ications in several variables, multiplying a moment functional times a polynomial of degree $1$, was done in \cite{APPR14}, where the modif\/ication naturally appears in the study of {\it linear relations} between two families of multivariate polynomials. Necessary and suf\/f\/icient conditions about the existence of orthogonality properties for one of the families was given in terms of the three term relations, by using Favard's theorem in several variables~\cite{DX14}.

In \cite{BM04} the authors show that modif\/ications of univariate moment functionals are related with the Darboux factorization of the associated Jacobi matrix. In this direction, in \cite{AM16, AM2, AM3} long discussions about several aspects of the theory of multivariate orthogonal polynomials can be found. In particular, Darboux transformations for orthogonal polynomials in several variables were presented in~\cite{AM16}, and in~\cite{AM2} we can f\/ind an extension of the univariate Christof\/fel determinantal formula to the multivariate context. Also, as in the univariate case, they proved a~connection with the Darboux factorization of the Jacobi block matrix associated with the three term recurrence relations for multivariate orthogonal polynomials. Similar considerations for multivariate Geronimus and more general linear spectral transformations of moment functionals can be found, among other topics, in~\cite{AM3}.

In this paper, we study Uvarov and Christof\/fel modif\/ications of quasi-def\/inite moment functionals. The study of orthogonal polynomials associated with moment functionals f\/its into a quite general frame that includes families of orthogonal polynomials associated either with positive-def\/inite or non positive-def\/inite moment functionals such as those generated using Bessel polynomials, among others. We give necessary and suf\/f\/icient conditions in order to obtain the quasi-def\/initeness of the modif\/ied moment functional in both cases, Uvarov and Christof\/fel modif\/ications (see Theorems~\ref{main-theorem} and~\ref{T42}). We also investigate properties of the polynomials associated with the modif\/ied functional, relations between original and modif\/ied orthogonal polynomials as well as the impact of the modif\/ication in some useful properties of the orthogonal polynomials.

When dealing with the Christof\/fel modif\/ication, and in the case where both moment functionals, the original and the modif\/ied, are quasi-def\/inite, some of the results are similar to those obtained in~\cite{AM16, AM2, AM3} using a dif\/ferent technique. In particular, the necessary condition in our Theorem~\ref{T42} was proven there for an arbitrary degree polynomial, but the suf\/f\/icient condition was not discussed there.

The main results of this work can be divided in three parts, corresponding with Sections~\ref{section3},~\ref{section4} and~\ref{section5}. Section~\ref{section2} is devoted to establish the basic concepts and tools we will need along the paper. For that section, we recall the standard notations and basic results in the theory of multivariate orthogonal polynomials following mainly \cite{DX14}.

Uvarov modif\/ication of a quasi-def\/inite moment functional is studied in Section~\ref{section3}. In this case, we modify a quasi-def\/inite moment functional by adding several Dirac deltas at a f\/inite discrete set of f\/ixed points. First, we will give a necessary and suf\/f\/icient condition for the quasi-def\/initeness of the moment functional associated with this Uvarov modif\/ication, and, in the af\/f\/irmative case, we will deduce the connection between both families of orthogonal polynomials. A similar study was done in~\cite{DFPPX10} in the case when the original moment functional is def\/ined from a measure, and the modif\/ication is def\/ined by means of a positive semi-def\/inite matrix. In that case, both moment functionals are positive-def\/inite, and both orthogonal polynomial systems exist. As a consequence, since it is possible to work with orthonormal polynomials, in~\cite{DFPPX10} the f\/irst family of orthogonal polynomials is considered orthonormal, and formulas are simpler than in the general case considered here.

\looseness=1 In Section~\ref{section4} we study the Christof\/fel modif\/ication by means of a second degree polynomial. This is not a trivial extension of the case when the degree of the polynomial is~$1$ studied in~\cite{APPR14}, since in several variables not every polynomial of degree $2$ factorizes as a~product of polynomials of degree~$1$. Again, we relate both families of orthogonal polynomials and also we deduce the orthogonality by using Favard's theorem in several variables. In fact, from a~recursive expression for the modif\/ied polynomials, we give necessary and suf\/f\/icient conditions for the existence of a~three term relation, and we show that there exists a polynomial of second degree constructed in terms of the f\/irst connection coef\/f\/icients. Since three term relations can be reformulated in terms of Jacobi block matrices, similar connection results can be found in~\cite{AM2,AM3} for arbitrary degree polynomials by using a block matrix formalism. Moreover, we study this kind of Christof\/fel modif\/ication in the particular case when the original moment functional is centrally symmetric, the natural extension of the concept of symmetry for univariate moment functionals.

Finally, Section~\ref{section5} is devoted to apply our results to two families of multivariate orthogonal polynomials: the classical orthogonal polynomials on the ball in $\mathbb{R}^d$, and the Bessel--Laguerre polynomials, a family of bivariate polynomials orthogonal with respect to a non positive-def\/inite moment functional, that can be found in~\cite{KLL01} as solution of one of the classical Krall and Shef\/fer's second-order partial dif\/ferential equation~\cite{KS67}.

First, we modify the classical moment functional on the ball by adding a Dirac mass at $\mathbf{0}\in\mathbb{R}^d$. This example was introduced in~\cite{FPPX10}, and here, we complete that study giving, as a new result, the asymptotics of the modif\/ication. Next, we use the Christof\/fel
modif\/ication by means of a second degree polynomial to deduce a {\it relation between adjacent families} of classical ball orthogonal polynomials in several variables. As far as we know, there is not a relation of this kind for ball polynomials in the literature. This relation can be seen as an extension of~(22.7.23) in~\cite[p.~782]{AS72} for Gegenbauer polynomials in one variable. Last example corresponds to a Uvarov modif\/ication for the non positive-def\/inite classical Bessel--Laguerre bivariate polynomials def\/ined in~\cite{KLL01}.

\section{Basic tools}\label{section2}

In this section we collect the def\/initions and basic tools about orthogonal polynomials in several variables that will be used later. We follow mainly~\cite{DX14}.

Along this paper, we will denote by $\mathcal{M}_{h \times k}(\mathbb{R})$ the linear space of matrices of size $h\times k$ with real entries, and the notation will simplify when $h=k$ as $\mathcal{M}_{h}(\mathbb{R})$. As usual, we will say that $M\in\mathcal{M}_h(\mathbb{R})$ is non-singular
(or invertible) if $\det M\neq 0$, and symmetric if $M^t = M$, where $M^t$ denotes its transpose. Moreover, $I_h$ will denote the identity matrix of size $h$, and we will omit the subscript when the size is clear from the context.

Let us consider the $d$-dimensional space $\mathbb{R}^d$, with $d \geqslant 1$. Given $\nu = (n_1, n_2, \ldots, n_d) \in \mathbb{N}_0^d$ a~multi-index, a monomial in $d$ variables is an expression of the form
\begin{gather*}
\mathbf{x}^\nu = x_1^{n_1} x_2^{n_2} \cdots x_d^{n_d},
\end{gather*}
where $\mathbf{x} = (x_1, \dots, x_d) \in \mathbb{R}^d$. The total degree of the monomial is denoted by $n = |\nu| = n_1 + n_2 + \cdots + n_d$. Then, a~polynomial of total degree $n$ in $d$ variables with real coef\/f\/icients is a f\/inite linear combination of monomials of degree at most $n$,
\begin{gather*}
p(\mathbf{x}) = \sum_{|\nu|\leqslant n} a_\nu \mathbf{x}^\nu, \qquad a_\nu \in \mathbb{R}.
\end{gather*}
Let us denote by $\Pi^d_n$ the linear space of polynomials in $d$ variables of total degree less than or equal to~$n$, and by $\Pi^d$ the linear space of all polynomials in $d$ variables.

For $d\geqslant2$ and $n\geqslant0$, if we denote
\begin{gather*}
r^d_n = \# \{ \mathbf{x}^\nu \colon |\nu| = n \} = \binom{n+d-1}{n},
\end{gather*}
then, unlike the univariate case, $r_n^d > 1$ for $d\geqslant2$. Moreover,
\begin{gather}\label{rbold}
\mathbf{r}_n^d = \dim \Pi^d_n = \sum_{m=0}^n r^d_m = \binom{n+d}{n}.
\end{gather}
When we deal with more than one variable, the f\/irst problem we have to face is that there is not a natural way to order the monomials. As in \cite{DX14}, we use the {\it graded lexicographical order}, that is, we order the monomial by their total degree, and then by reverse lexicographical order. For instance, if $d=2$, the order of the monomials is
\begin{gather*}
\big\{ 1; x_1, x_2; x_1^2, x_1 x_2, x_2^2; x_1^3, x_1^2 x_2, x_1 x_2^2, x_2^3; \dots \big\}.
\end{gather*}

A useful tool in the theory of orthogonal polynomials in several variables is the representation of a~basis of polynomials as a {\it polynomial system}~(PS).

\begin{Definition}
A \emph{polynomial system $($PS$)$} is a sequence of column vectors of increasing size~$r_n^d$, $\{\mathbb{P}_n\}_{n \geqslant 0}$, whose entries are independent polynomials of total degree~$n$
\begin{gather*}
\mathbb{P}_n = \mathbb{P}_n(\mathbf{x}) = \big( P_\nu^n(\mathbf{x}) \big)_{|\nu|=n} = \big(
P_{\nu_1}^n(\mathbf{x}), P_{\nu_2}^n(\mathbf{x}), \dots , P_{\nu_{r^d_n}}^n(\mathbf{x})\big)^t,
\end{gather*}
where $\nu_1, \nu_2, \dots, \nu_{r^d_n} \in \big\{ \nu \in \mathbb{N}_0^d \colon |\nu|=n \big\}$ are dif\/ferent multi-indexes arranged in the reverse lexicographical order.
\end{Definition}

Observe that, for $n\geqslant 0$, the entries of $\{\mathbb{P}_0, \mathbb{P}_1, \dots, \mathbb{P}_n \}$ form a~basis of~$\Pi_n^d$, and, by extension, we will say that the vector polynomials $\{\mathbb{P}_m\}_{m=0}^n$ is a~basis of~$\Pi_n^d$.

Using this representation, we can def\/ine the \emph{canonical polynomial system}, as the sequence of vector polynomials whose entries are the basic monomials arranged in the reverse lexicographical order,
\begin{gather*}
\{ \mathbb{X}_n \}_{n\geqslant0} = \big\{ \big(\mathbf{x}^{\nu_1}, \mathbf{x}^{\nu_2}, \dots, \mathbf{x}^{\nu_{r_n^d}}\big)^t \colon |\nu_i| =n \big\}_{n\geqslant0}.
\end{gather*}
Thus, each polynomial vector $\mathbb{P}_n$, for $n\geqslant 0$, can be expressed as a unique linear combination of the canonical polynomial system with matrix coef\/f\/icients
\begin{gather*}
\mathbb{P}_n = G_{n,n} \mathbb{X}_n + G_{n,n-1} \mathbb{X}_{n-1} + \cdots + G_{n,0} \mathbb{X}_0,
\end{gather*}
where $G_{n,k} \in \mathcal{M}_{r_n^d \times r_k^d} (\mathbb{R})$ for $k=0, 1, \ldots, n$. Since both vectors $\mathbb{P}_n$ and $\mathbb{X}_n$ contain a system of independent polynomials, the square matrix $G_{n,n}$ is non-singular and it is called the {\it leading coefficient} of the vector polynomial $\mathbb{P}_n$. Usually, it will be denoted by
\begin{gather*}
G_n(\mathbb{P}_n) = G_{n,n} \in \mathcal{M}_{r_n^d}(\mathbb{R}).
\end{gather*}
In the case when the leading coef\/f\/icient is the identity matrix for all vector polynomial in a PS
\begin{gather*}
G_n(\mathbb{P}_n) = I_{r_n^d}, \qquad n\geqslant 0,
\end{gather*}
then we will say that $\{\mathbb{P}_n\}_{n \geqslant 0}$ is a {\it monic PS}. With this notation, we will say that two vector polynomials $\mathbb{P}_n$ and $\mathbb{Q}_n$, for $n\geqslant 0$, have the same leading coef\/f\/icient if $G_n(\mathbb{P}_n) = G_n(\mathbb{Q}_n)$, or, equivalently, if any entry of the vector $\mathbb{P}_n - \mathbb{Q}_n$ is a polynomial of degree at most $n-1$.

The \emph{shift operator} in several variables, that is, the multiplication of a polynomial times a~va\-riable $x_i$ for $i=1, 2, \ldots, d$, can be expressed in terms of the canonical PS as
\begin{gather*}
x_i \mathbb{X}_n = L_{n,i} \mathbb{X}_{n+1}, \qquad n\geqslant 0,
\end{gather*}
where $L_{n,i}$, $i=1, 2, \ldots, d$ are $r_n^d\times r^d_{n+1}$ full rank matrices such that (see~\cite{DX14})
\begin{gather*}
L_{n,i} L^t_{n,i} = I_{r^d_n}.
\end{gather*}
In particular, $L_{n,i}$ is a matrix containing the columns of the identity matrix of size~$r_n^d$ but including $r^d_{n+1}-r_n^d$ columns of zeros eventually separating the columns of the identity.

Moreover, for $i, j = 1, 2, \ldots, d$, the matrix products $L_{n,i} L_{n+1,j}\in \mathcal{M}_{r_n^d\times r^d_{n+2}}(\mathbb{R})$, are also full rank matrices and of the same type of $L_{n,i}$. In addition, since $x_i x_j \mathbb{X}_n = x_j x_i \mathbb{X}_n$, for $n\geqslant0$, we get
\begin{gather}\label{L-i-j}
L_{n,i} L_{n+1,j} = L_{n,j} L_{n+1,i}, \qquad i, j = 1, 2, \ldots, d, \qquad n\geqslant0.
\end{gather}
For instance, if $d=2$,
\begin{gather*}
L_{n,1} = \left(
\begin{array}{@{}ccc|c@{}}
1 & & \bigcirc & 0 \\
& \ddots & & \vdots \\
\bigcirc & & 1 & 0 \\
\end{array}
\right), \qquad
L_{n,2} = \left(
\begin{array}{@{}c|ccc@{}}
0 & 1 & & \bigcirc \\
\vdots & & \ddots & \\
0 & \bigcirc & & 1 \\
\end{array}
\right),
\end{gather*}
and
\begin{gather*}
L_{n,1} L_{n+1,1} = \left(
\begin{array}{@{}ccc|cc@{}}
1 & & \bigcirc & 0 & 0 \\
 & \ddots & & \vdots & \vdots \\
\bigcirc & & 1 & 0 & 0 \\
\end{array}
\right),\qquad
L_{n,2} L_{n+1,2} = \left(
\begin{array}{@{}cc|ccc@{}}
0 & 0 & 1 & & \bigcirc \\
\vdots & \vdots & & \ddots & \\
0 & 0 & \bigcirc & & 1 \\
\end{array}
\right),
\\
L_{n,1} L_{n+1,2} = \left(
\begin{array}{@{}c|ccc|c@{}}
0 & 1 & & \bigcirc & 0 \\
\vdots & & \ddots & & \vdots \\
0 & \bigcirc & & 1 & 0 \\
\end{array}
\right) = L_{n,2} L_{n+1,1}.
\end{gather*}

Let us now turn to deal with moment functionals and orthogonality in several variables. Given a~sequence of real numbers $\{\mu_{\nu}\}_{|\nu|=n\geqslant0}$, the moment functional~$u$ is def\/ined by means of its moments
\begin{align*}
u \colon \ \Pi^d & \longrightarrow \mathbb{R},\\
 \mathbf{x}^\nu & \longmapsto \langle u, \mathbf{x}^\nu \rangle = \mu_\nu,
\end{align*}
and extended to polynomials by linearity, i.e., if $p(\mathbf{x}) = \sum\limits_{|\nu|\leqslant n} a_\nu \mathbf{x}^\nu$, then
\begin{gather*}
\langle u, p(\mathbf{x})\rangle = \sum_{|\nu|\leqslant n} a_\nu \mu_\nu.
\end{gather*}
Now, we recall some basic operations acting over a moment functional $u$. The {\it action of~$u$ over a polynomial matrix} is def\/ined by
\begin{gather*}
\langle u, M \rangle =\left(\langle u, m_{i,j}(\mathbf{x})\rangle \right)_{i,j=1}^{h,k}\in {\mathcal M}_{h\times k}(\mathbb{R}),
\end{gather*}
where $M=(m_{i,j}(\mathbf{x}))_{i,j=1}^{h,k} \in {\mathcal M}_{h\times k}(\Pi^d)$, and the {\it left product of a polynomial $p\in \Pi^d$ times $u$} by
\begin{gather*}
\langle p u, q \rangle = \langle u, p q \rangle,\qquad \forall\, q\in \Pi^d.
\end{gather*}

Using the canonical polynomial system, for $h,k \geqslant 0$, we def\/ine the $r_h^d\times r_k^d$ {\it block of moments}
\begin{gather*}
\textbf{m}_{h,k} = \langle u, \mathbb{X}_h \mathbb{X}^t_k\rangle = \textbf{m}_{k,h}^t.
\end{gather*}
We must remark that $\textbf{m}_{h,k}$ contains all of moments of order $h\times k$. Then, we can see the moment matrix as a block matrix in the form
\begin{gather*}
\textbf{M}_n = \left(\begin{array}{@{}c|c|c|c|c@{}}
\textbf{m}_{0,0} & \textbf{m}_{0,1} & \textbf{m}_{0,2} & \cdots & \textbf{m}_{0,n}\\
\hline
\textbf{m}_{1,0} & \textbf{m}_{1,1} & \textbf{m}_{1,2} & \cdots & \textbf{m}_{1,n}\\
\hline
\textbf{m}_{2,0} & \textbf{m}_{2,1} & \textbf{m}_{2,2} & \cdots & \textbf{m}_{2,n}\\
\hline
\vdots & \vdots & \vdots & & \vdots \\
\hline
\textbf{m}_{n,0} & \textbf{m}_{n,1} & \textbf{m}_{n,2} & \cdots & \textbf{m}_{n,n}
\end{array}\right)
\end{gather*}
of dimension $\mathbf{r}_n^d$ def\/ined in \eqref{rbold}.

\begin{Definition}
A moment functional $u$ is called quasi-def\/inite or regular if and only if
\begin{gather*}
\Delta_n^d = \det \textbf{M}_n \neq 0, \qquad n\geqslant 0.
\end{gather*}
\end{Definition}

Now, we are ready to introduce the orthogonality. Two polynomials $p, q \in \Pi^d$ are said to be orthogonal with respect to $u$ if $\langle u, p q \rangle = 0$. A given polynomial $p \in \Pi^d_n$ of exact degree $n$ is an orthogonal polynomial if it is orthogonal to any polynomial of lower degree.

We can also introduce the orthogonality in terms of a polynomial system. We will say that a~PS $\{\mathbb{P}_n\}_{n\geqslant 0}$ is orthogonal (OPS) with respect to~$u$ if
\begin{gather*}
\langle u, \mathbb{X}_{n} \mathbb{P}_{n}^t \rangle = S_{n},\quad n\geqslant 0, \qquad \langle u, \mathbb{X}_{m} \mathbb{P}_{n}^t \rangle = 0, \quad m < n,
\end{gather*}
where $S_{n}$ is a non-singular matrix of size $r_n^d \times r_n^d$. As a consequence, it is clear that
\begin{gather*}
\langle u, \mathbb{P}_{n} \mathbb{P}_{n}^t \rangle = H_{n}, \quad n\geqslant 0, \qquad \langle u, \mathbb{P}_{m} \mathbb{P}_{n}^t \rangle = 0, \quad m \neq
n,
\end{gather*}
where the matrix $H_{n}$ is symmetric and non-singular. At this point we have to notice that, with this def\/inition, the orthogonality between the polynomials of the same total degree may not hold. In the case when the matrix~$H_n$ is diagonal for all $n\geqslant0$ we say that the OPS is mutually orthogonal.

A moment functional $u$ is \emph{quasi-definite or regular} if and only if there exists an OPS. If $u$ is quasi-def\/inite then there exists a unique \emph{monic OPS}. As usual $u$ is said \emph{positive definite} if $\langle u, p^2\rangle >0$, for all polynomial $p\neq 0$, and a positive
def\/inite moment functional $u$ is quasi-def\/inite. In this case, there exist an \emph{orthonormal basis}
satisfying
\begin{gather*}
\langle u, \mathbb{P}_{n} \mathbb{P}_{n}^t \rangle = H_n = I_{r_n^d}.
\end{gather*}

For $n\geqslant0$, the {\it kernel functions} in several variables are the symmetric functions def\/ined by
\begin{gather}
P_m(u; \mathbf{x}, \mathbf{y}) = \mathbb{P}_m(\mathbf{x})^t H_m^{-1} \mathbb{P}_m(\mathbf{y}), \qquad m\geqslant0,\nonumber\\
K_n(u; \mathbf{x}, \mathbf{y}) = \sum_{m=0}^n \mathbb{P}_m(\mathbf{x})^t H_m^{-1} \mathbb{P}_m(\mathbf{y})
= \sum_{m=0}^n P_m(u; \mathbf{x}, \mathbf{y}). \label{kernelK}
\end{gather}
Both kernels satisfy the usual reproducing property, and they are independent of the particularly chosen orthogonal polynomial system. For $n=0$, we assume $P_{-1}(u;\mathbf{x},\mathbf{y})=K_{-1}(u;\mathbf{x}, \mathbf{y}) = 0$.

To f\/inish this section, we need to recall the \emph{three term relations} satisf\/ied by orthogonal polynomials in several variables. As in the univariate case, the orthogonality can be characterized by means of the \emph{three term relations} \cite[p.~74]{DX14}.

\begin{Theorem}
Let $\{\mathbb{P}_n\}_{n\geqslant 0} =\{P^n_\nu(\mathbf{x})\colon |\nu| = n, n\geqslant0\}$, $\mathbb{P}_0=1$, be an arbitrary sequence in~$\Pi^d$. Then the following statements are equivalent.
\begin{enumerate}\itemsep=0pt
\item[$(1)$] There exists a linear functional $u$ which defines a quasi-definite moment functional on~$\Pi^d$ and which makes $\{\mathbb{P}_n\}_{n\geqslant0}$ an orthogonal basis in~$\Pi^d$.

\item[$(2)$] For $n\geqslant 0$, $1\leqslant i\leqslant d$, there exist matrices $A_{n,i}$, $B_{n,i}$ and $C_{n,i}$ of respective sizes
$r_n^d\times r_{n+1}^d$, $r_n^d\times r_{n}^d$ and $ r_n^d \times r_{n-1}^d$, such that
\begin{enumerate}\itemsep=0pt
\item[$(a)$] the polynomials $\mathbb{P}_n$ satisfy the three term relations
\begin{gather} \label{3TR}
 x_i \mathbb{P}_n = A_{n,i} \mathbb{P}_{n+1} + B_{n,i} \mathbb{P}_{n} +C_{n,i} \mathbb{P}_{n-1}, \qquad 1\leqslant i\leqslant d,
\end{gather}
with $\mathbb{P}_{-1}=0$ and $C_{-1,i} =0$,

\item[$(b)$] for $n\geqslant 0$ and $1\leqslant i\leqslant d$, the matrices $A_{n,i}$ and $C_{n+1,i}$
satisfy the rank conditions
\begin{gather}\label{cond_rank_1}
\operatorname{rank} A_{n,i} = \operatorname{rank} C_{n+1,i}=r_n^d,
\end{gather}
and,
\begin{gather}\label{cond_rank_2}
\operatorname{rank} A_{n} = \operatorname{rank} C_{n+1}^t = r_{n+1}^d,
\end{gather}
where $A_n$ is the joint matrix of $A_{n,i}$, defined as
\begin{gather*}
A_n = \big(A_{n,1}^t,A_{n,2}^t, \ldots, A_{n,d}^t\big)^t\in \mathcal{M}_{dr_n^d\times r_{n+1}^d}(\mathbb{R}),
\end{gather*}
and $C_{n+1}^t$ is the joint matrix of $C_{n+1,i}^t$. Moreover,
\begin{gather}
 A_{n,i} H_{n+1} = \langle u, x_i \mathbb{P}_n \mathbb{P}_{n+1}^t\rangle,\qquad
 B_{n,i} H_n = \langle u, x_i \mathbb{P}_n \mathbb{P}_{n}^t\rangle,\nonumber\\
 C_{n,i} H_{n-1} = \langle u, x_i \mathbb{P}_n \mathbb{P}_{n-1}^t\rangle =H_n A^t_{n-1,i}.\label{C1}
\end{gather}
\end{enumerate}
\end{enumerate}
\end{Theorem}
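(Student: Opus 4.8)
The statement is the several-variables version of Favard's theorem, so the plan is to prove the two implications separately, with $(1)\Rightarrow(2)$ being the routine one. Assuming $u$ is quasi-definite with orthogonal basis $\{\mathbb{P}_n\}_{n\geqslant0}$ and $\langle u,\mathbb{P}_m\mathbb{P}_n^t\rangle=\delta_{m,n}H_n$, for each $i$ the entries of $x_i\mathbb{P}_n$ have total degree at most $n+1$, so there is a unique expansion $x_i\mathbb{P}_n=\sum_{k=0}^{n+1}\Gamma^{(i)}_{n,k}\mathbb{P}_k$ in the basis. Pairing on the right with $\mathbb{P}_m$ and using orthogonality gives $\langle u,x_i\mathbb{P}_n\mathbb{P}_m^t\rangle=\Gamma^{(i)}_{n,m}H_m$; but the same quantity equals $\langle u,\mathbb{P}_n(x_i\mathbb{P}_m)^t\rangle$, which vanishes whenever $\deg(x_i\mathbb{P}_m)=m+1<n$. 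Since $H_m$ is non-singular, $\Gamma^{(i)}_{n,m}=0$ for $m\leqslant n-2$, and setting $A_{n,i}=\Gamma^{(i)}_{n,n+1}$, $B_{n,i}=\Gamma^{(i)}_{n,n}$, $C_{n,i}=\Gamma^{(i)}_{n,n-1}$ yields the three term relation \eqref{3TR}. The formulas \eqref{C1} are then read off from $\Gamma^{(i)}_{n,m}H_m=\langle u,x_i\mathbb{P}_n\mathbb{P}_m^t\rangle$, and the last identity in \eqref{C1} follows by transposing and using the symmetry of the $H_n$.

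For the rank conditions \eqref{cond_rank_1}--\eqref{cond_rank_2} I would compare leading coefficients. Writing $G_n=G_n(\mathbb{P}_n)$ and matching the degree-$(n+1)$ parts of $x_i\mathbb{P}_n=A_{n,i}\mathbb{P}_{n+1}+B_{n,i}\mathbb{P}_n+C_{n,i}\mathbb{P}_{n-1}$ against $x_i\mathbb{X}_n=L_{n,i}\mathbb{X}_{n+1}$ gives $A_{n,i}G_{n+1}=G_nL_{n,i}$, hence $A_{n,i}=G_nL_{n,i}G_{n+1}^{-1}$. Since $G_n,G_{n+1}$ are non-singular, $A_{n,i}$ has the same rank $r_n^d$ as $L_{n,i}$, and the joint matrix $A_n$ has the same rank $r_{n+1}^d$ as the joint shift matrix, which is full because the products $x_i\mathbf{x}^\nu$, $|\nu|=n$, exhaust all monomials of degree $n+1$. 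The corresponding statements for $C_{n+1,i}$ and $C_{n+1}^t$ follow from \eqref{C1}, which expresses $C_{n+1,i}=H_{n+1}A_{n,i}^tH_n^{-1}$, a rank-preserving transformation.

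The implication $(2)\Rightarrow(1)$ is where the work lies. Because the entries of $\{\mathbb{P}_n\}_{n\geqslant0}$ form a basis of $\Pi^d$, I can define $u$ by linear extension of $\langle u,\mathbb{P}_0\rangle=1$ and $\langle u,\mathbb{P}_n\rangle=0$ for $n\geqslant1$. The goal is to show by induction on the degree that $\langle u,\mathbb{P}_m\mathbb{P}_n^t\rangle=\delta_{m,n}H_n$ with $H_n$ symmetric and non-singular, which simultaneously shows that $u$ is quasi-definite and that $\{\mathbb{P}_n\}$ is orthogonal. The engine is the three term relation: multiplying \eqref{3TR} on the right by $\mathbb{P}_m^t$ and applying $u$ gives $A_{n,i}\langle u,\mathbb{P}_{n+1}\mathbb{P}_m^t\rangle=\langle u,x_i\mathbb{P}_n\mathbb{P}_m^t\rangle-B_{n,i}\langle u,\mathbb{P}_n\mathbb{P}_m^t\rangle-C_{n,i}\langle u,\mathbb{P}_{n-1}\mathbb{P}_m^t\rangle$. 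For $m\leqslant n-2$ every term on the right vanishes by the inductive hypothesis (moving $x_i$ onto $\mathbb{P}_m$ lowers its degree below $n$), and stacking over $i$ together with the full column rank of $A_n$ from \eqref{cond_rank_2} forces $\langle u,\mathbb{P}_{n+1}\mathbb{P}_m^t\rangle=0$. The symmetry and non-singularity of the new diagonal block $H_{n+1}$ must then be produced together with the relation $C_{n+1,i}H_n=H_{n+1}A_{n,i}^t$, using the rank conditions \eqref{cond_rank_1}--\eqref{cond_rank_2} and the invertibility of $H_n$.

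The hard part will be the borderline case $m=n-1$, that is, the orthogonality $\langle u,\mathbb{P}_{n+1}\mathbb{P}_{n-1}^t\rangle=0$, together with the consistent emergence of $H_{n+1}$. Carrying out the same reduction leaves $A_{n,i}\langle u,\mathbb{P}_{n+1}\mathbb{P}_{n-1}^t\rangle=H_nA_{n-1,i}^t-C_{n,i}H_{n-1}$, so the vanishing of the left-hand side and the compatibility $C_{n,i}H_{n-1}=H_nA_{n-1,i}^t$ are coupled and cannot be separated -- exactly the place where the one-variable argument is trivial. I expect to resolve this by feeding in the commutativity relations $A_{n,i}A_{n+1,j}=A_{n,j}A_{n+1,i}$ and their lower-order companions, which hold automatically because the given $\mathbb{P}_n$ are genuine polynomials in commuting variables (so that $x_ix_j\mathbb{P}_n=x_jx_i\mathbb{P}_n$), and combining them with the full-rank hypotheses to pin down a unique symmetric non-singular $H_{n+1}$ satisfying the compatibility. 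This is the multivariate analogue of the one-variable step in which the explicit evaluation $\langle u,x^np_n\rangle=\lambda_1\cdots\lambda_n\langle u,1\rangle$ closes the induction.
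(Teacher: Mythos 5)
This theorem is not proved in the paper at all: it is recalled verbatim from Dunkl--Xu \cite[p.~74]{DX14} as a known tool, so the only fair comparison is with the standard proof there. Your direction $(1)\Rightarrow(2)$ is essentially complete and matches that proof: the expansion of $x_i\mathbb{P}_n$, the vanishing of the coefficients below degree $n-1$, the identities \eqref{C1}, and the leading-coefficient computation $A_{n,i}G_{n+1}=G_nL_{n,i}$ giving the rank conditions are all correct.

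The converse, however, contains a genuine gap, and it sits exactly where you flag it. Your induction pairs $\mathbb{P}_{n+1}$ against $\mathbb{P}_m$ and moves $x_i$ onto the \emph{lower}-degree factor; for $m=n-1$ this creates the diagonal term $H_n A_{n-1,i}^t$ and leaves you with $A_{n,i}\langle u,\mathbb{P}_{n+1}\mathbb{P}_{n-1}^t\rangle=H_nA_{n-1,i}^t-C_{n,i}H_{n-1}$, whose right-hand side you cannot show vanishes. Your proposed escape via the commutativity relations $A_{n,i}A_{n+1,j}=A_{n,j}A_{n+1,i}$ is not a proof and is not the mechanism: those relations are automatic consequences of the $\mathbb{P}_n$ being polynomial vectors and carry no new information here. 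The standard argument avoids the borderline case entirely by inducting on the \emph{lower} degree $m$ and testing against the monomial vectors: since the joint matrix $L_{m-1}$ has full column rank, $\mathbb{X}_m$ is a matrix combination of the $x_i\mathbb{X}_{m-1}$, and $\langle u, x_i\mathbb{X}_{m-1}\mathbb{P}_n^t\rangle=\langle u,\mathbb{X}_{m-1}\mathbb{P}_{n+1}^t\rangle A_{n,i}^t+\langle u,\mathbb{X}_{m-1}\mathbb{P}_{n}^t\rangle B_{n,i}^t+\langle u,\mathbb{X}_{m-1}\mathbb{P}_{n-1}^t\rangle C_{n,i}^t$ vanishes for every $m<n$ because $m-1<n-1$, so the shifted variable always lands on the higher-degree factor and no diagonal block is ever produced. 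This gives $\langle u,\mathbb{X}_m\mathbb{P}_n^t\rangle=0$ for all $m<n$, hence full orthogonality. Invertibility of $H_n$ then comes from $L_{n-1,i}S_n=S_{n-1}C_{n,i}^t$ with $S_n=\langle u,\mathbb{X}_n\mathbb{P}_n^t\rangle$: stacking over $i$ and using $\operatorname{rank}C_n^t=r_n^d$ together with the inductive invertibility of $S_{n-1}$ forces $\operatorname{rank}S_n=r_n^d$. (The rank condition on the joint matrix $A_n$ is what guarantees, again by comparing leading coefficients, that each $\mathbb{P}_n$ has invertible leading coefficient, i.e., that the sequence really is a basis --- a point your sketch also leaves implicit, since the hypothesis only says ``arbitrary sequence''.) Without replacing your induction by something of this kind, the proof does not close.
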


In the case when the orthogonal polynomial system is monic, it follows that $A_{n,i} = L_{n,i}$, $n\geqslant0$ for $1\leqslant i\leqslant d$. In this case, the rank conditions for the matrices $A_{n,i} = L_{n,i}$ and $A_{n} = L_{n}$ obviously hold.

\section{Uvarov modif\/ication}\label{section3}

Let $u$ be a quasi-def\/inite moment functional def\/ined on $\Pi^d$, let $\{\xi_1, \xi_2, \ldots, \xi_N\}$ be a f\/ixed set of distinct points in $\mathbb{R}^d$, and let $\{\lambda_1, \lambda_2, \ldots, \lambda_N\}$ be a f\/inite set of non zero real numbers. Then, for $p \in \Pi^d$, the expression
\begin{gather} \label{v-def}
\langle v, p \rangle = \langle u, p \rangle + \lambda_1 p(\xi_1) + \lambda_2 p(\xi_2) + \cdots + \lambda_N p(\xi_N),
\end{gather}
def\/ines a moment functional on $\Pi^d$ which is known as a {\it Uvarov modification} of $u$.

If we def\/ine the diagonal matrix $\Lambda = \operatorname{diag}\{\lambda_1,\ldots,\lambda_N\}$, then, for $p, q \in \Pi^d$, we can write
\begin{gather} \label{sum-mass}
\langle v, p q \rangle = \langle u, p q \rangle + (p(\xi_1), p(\xi_2), \ldots, p(\xi_N)) \Lambda
\begin{pmatrix}
q(\xi_1)\\ q(\xi_2)\\ \vdots\\ q(\xi_N)
\end{pmatrix} = \langle u, p q \rangle + \mathsf{p}(\xi)^t \Lambda \mathsf{q}(\xi),
\end{gather}
where $\mathsf{p}(\xi) = (p(\xi_1), p(\xi_2), \ldots, p(\xi_N))^t$ and $\mathsf{q}(\xi) = (q(\xi_1), q(\xi_2), \ldots, q(\xi_N))^t$, for all $p, q\in\Pi^d$.

If we generalize equation \eqref{sum-mass} using a non-diagonal matrix $\Lambda$, then $v$ is not a~moment functional but it can be seen as a~bilinear form.

If the moment functional $v$ is quasi-def\/inite, our f\/irst result shows that orthogonal polynomials with respect to $v$ can be derived in terms of those with respect to $u$. To simplify the proof of this result we will make use of a vector-matrix notation which we will introduce next.

Throughout this section, we shall f\/ix $\{\mathbb{P}_n\}_{n\geqslant0}$ as an orthogonal polynomial system associated with $u$. We denote by $\mathsf{P}_n(\xi)$ the matrix whose columns are $\mathbb{P}_n(\xi_i)$
\begin{gather} \label{sP}
\mathsf{P}_n(\xi)= \left(\mathbb{P}_n(\xi_1) | \mathbb{P}_n(\xi_2) | \dots | \mathbb{P}_n(\xi_N) \right) \in \mathcal{M}_{r_n^d \times N}(\mathbb{R}),
\end{gather}
denote by $\mathcal{K}_{n}$ the matrix whose entries are the kernels $K_{n}(u;\xi_i,\xi_j)$ def\/ined in~\eqref{kernelK},
\begin{gather} \label{cK}
\mathcal{K}_{n} = \big(K_{n}(u;\xi_i,\xi_j) \big)_{i,j=1}^N \in \mathcal{M}_{N\times N}(\mathbb{R}),
\end{gather}
and, f\/inally, denote by $\mathsf{K}_{n}(\xi,\mathbf{x})$ the vector of polynomials
\begin{gather} \label{sK}
 {\mathsf{K}_{n}(\xi,\mathbf{x})} = \big(K_{n}(u; \xi_1,\mathbf{x}), K_{n}(u;\xi_2,\mathbf{x}), \ldots, K_{n}(u;\xi_N,\mathbf{x})\big)^{t}.
\end{gather}
From the fact that $K_n(u;\mathbf{x},\mathbf{y}) - K_{n-1}(u;\mathbf{x},\mathbf{y}) = P_n(u;\mathbf{x},\mathbf{y}) = \mathbb{P}_n (\mathbf{x})^{t} H_n^{-1} \mathbb{P}_n (\mathbf{y})$, we have immediately the following relations
\begin{gather}
\mathsf{P}_n^{t}(\xi) H_n^{-1} \mathbb{P}_n (\mathbf{x}) = \mathsf{K}_n(\xi,\mathbf{x}) - \mathsf{K}_{n-1}(\xi,\mathbf{x}), \label{P-K1}\\
\mathsf{P}_n^{t}(\xi) H_n^{-1} \mathsf{P}_n (\xi) = \mathcal{K}_n - \mathcal{K}_{n-1}, \label{P-K2}
\end{gather}
which will be used below.

Now we are ready to state and prove the main result in this section. In fact, we give a~necessary and suf\/f\/icient condition in order to ensure the quasi-def\/initeness of the modif\/ied moment functional in terms of the non-singularity of a matrix.

\begin{Theorem} \label{main-theorem}
Let $u$ be a quasi-definite moment functional and assume that the moment functional $v$ defined in~\eqref{v-def} is quasi-definite. Then the matrices
\begin{gather} \label{I-Lambda-K}
I_N + \Lambda {\mathcal{K}_{n-1}}
\end{gather}
are invertible for $n = 1, 2, \ldots$, and any polynomial system $\{\mathbb{Q}_n\}_{n\geqslant 0}$ orthogonal with respect to $v$ can be written in the form
\begin{gather}
\mathbb{Q}_0(\mathbf{x})= \mathbb{P}_0(\mathbf{x}),\nonumber\\
\mathbb{Q}_n(\mathbf{x})= \mathbb{P}_n(\mathbf{x}) - {\mathsf{P}_n(\xi)} (I_N + \Lambda {\mathcal{K}_{n-1}})^{-1} \Lambda
{\mathsf{K}_{n-1}(\xi,\mathbf{x})}, \qquad n\geqslant 1,\label{ex-expl}
\end{gather}
where $\{\mathbb{P}_n\}_{n\geqslant 0}$ is a polynomial system orthogonal with respect to $u$. Moreover, the invertible matrices $\hat{H}_n = \langle v, \mathbb{Q}_n \mathbb{Q}^{t}_n \rangle$ satisfy
\begin{gather} \label{direct}
 \hat{H}_n = H_n + \mathsf{P}_n(\xi) (I_N + \Lambda \mathcal{K}_{n-1})^{-1} \Lambda
 \mathsf{P}_n^{t}(\xi).
\end{gather}

Conversely, if the matrices defined in \eqref{I-Lambda-K} and \eqref{direct} are invertible then the polynomial system $\{\mathbb{Q}_n\}_{n\geqslant 0}$ defined by \eqref{ex-expl} constitutes an orthogonal polynomial system with respect~to $v$, and therefore $v$ is quasi-definite.
\end{Theorem}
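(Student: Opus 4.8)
The plan is to prove both implications by expanding the $v$-orthogonal polynomials in the $u$-orthogonal basis $\{\mathbb{P}_n\}$ and reducing every orthogonality condition to a single matrix equation relating the values $\mathsf{P}_n(\xi)$ and the analogous matrix of values $\mathsf{Q}_n(\xi)=\big(\mathbb{Q}_n(\xi_1)\,|\,\cdots\,|\,\mathbb{Q}_n(\xi_N)\big)$ of $\mathbb{Q}_n$ at the mass points.

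For the direct implication I would first dispose of the invertibility claim \eqref{I-Lambda-K}, which I expect to be the main obstacle, since it is the only place where the hypothesis ``$v$ quasi-definite'' enters essentially. The idea is to compute the Gram matrix of $v$ in the basis $\{\mathbb{P}_0,\dots,\mathbb{P}_n\}$. By \eqref{sum-mass} and the $u$-orthogonality of $\{\mathbb{P}_n\}$ one has $\langle v,\mathbb{P}_j\mathbb{P}_k^{t}\rangle=\delta_{jk}H_j+\mathsf{P}_j(\xi)\Lambda\mathsf{P}_k(\xi)^{t}$, so the full Gram matrix is $\mathbf{H}+\mathcal{P}\Lambda\mathcal{P}^{t}$, where $\mathbf{H}=\operatorname{diag}(H_0,\dots,H_n)$ and $\mathcal{P}=\big(\mathsf{P}_0(\xi)^{t}\,|\,\cdots\,|\,\mathsf{P}_n(\xi)^{t}\big)^{t}$. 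The matrix determinant lemma then gives $\det(\mathbf{H}+\mathcal{P}\Lambda\mathcal{P}^{t})=\det\mathbf{H}\cdot\det(I_N+\Lambda\,\mathcal{P}^{t}\mathbf{H}^{-1}\mathcal{P})$, and the factor $\mathcal{P}^{t}\mathbf{H}^{-1}\mathcal{P}=\sum_{m=0}^{n}\mathsf{P}_m(\xi)^{t}H_m^{-1}\mathsf{P}_m(\xi)$ telescopes by \eqref{P-K2} to $\mathcal{K}_n$ (recall $\mathcal{K}_{-1}=0$). Since this Gram matrix is congruent, through an invertible block-triangular change of basis, to the moment matrix $\mathbf{M}_n$, quasi-definiteness of $v$ forces its determinant to be nonzero; as $u$ is quasi-definite, $\det\mathbf{H}\neq0$, whence $\det(I_N+\Lambda\mathcal{K}_n)\neq0$ for every $n\geqslant0$, which is precisely \eqref{I-Lambda-K}.

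With invertibility secured, the explicit form \eqref{ex-expl} follows by a Fourier-type expansion. Writing any $v$-orthogonal $\mathbb{Q}_n$ with the same leading coefficient as $\mathbb{P}_n$ in the form $\mathbb{Q}_n=\mathbb{P}_n+\sum_{m<n}M_{n,m}\mathbb{P}_m$, the conditions $\langle v,\mathbb{Q}_n\mathbb{P}_m^{t}\rangle=0$ together with \eqref{sum-mass} yield $M_{n,m}H_m=-\mathsf{Q}_n(\xi)\Lambda\mathsf{P}_m(\xi)^{t}$. Substituting and using $\sum_{m=0}^{n-1}\mathsf{P}_m(\xi)^{t}H_m^{-1}\mathbb{P}_m(\mathbf{x})=\mathsf{K}_{n-1}(\xi,\mathbf{x})$ gives $\mathbb{Q}_n=\mathbb{P}_n-\mathsf{Q}_n(\xi)\Lambda\mathsf{K}_{n-1}(\xi,\mathbf{x})$. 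Evaluating this at each $\xi_i$ and reading off columns produces $\mathsf{Q}_n(\xi)(I_N+\Lambda\mathcal{K}_{n-1})=\mathsf{P}_n(\xi)$, so inverting the factor just shown to be invertible yields $\mathsf{Q}_n(\xi)=\mathsf{P}_n(\xi)(I_N+\Lambda\mathcal{K}_{n-1})^{-1}$ and hence \eqref{ex-expl}. Finally \eqref{direct} drops out of $\hat H_n=\langle v,\mathbb{Q}_n\mathbb{Q}_n^{t}\rangle=\langle v,\mathbb{Q}_n\mathbb{P}_n^{t}\rangle=H_n+\mathsf{Q}_n(\xi)\Lambda\mathsf{P}_n(\xi)^{t}$ after inserting this value of $\mathsf{Q}_n(\xi)$.

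For the converse I would \emph{define} $\mathbb{Q}_n$ by \eqref{ex-expl} and verify orthogonality directly. Evaluating \eqref{ex-expl} at the mass points and simplifying gives again $\mathsf{Q}_n(\xi)=\mathsf{P}_n(\xi)(I_N+\Lambda\mathcal{K}_{n-1})^{-1}$, after which the reproducing property of the kernels, namely $\langle u,\mathsf{K}_{n-1}(\xi,\mathbf{x})\mathbb{P}_m(\mathbf{x})^{t}\rangle=\mathsf{P}_m(\xi)^{t}$ for $m\leqslant n-1$ and $0$ for $m=n$, reduces $\langle v,\mathbb{Q}_n\mathbb{P}_m^{t}\rangle$ to a difference of two identical terms, giving $0$ for $m<n$ and $\hat H_n$ for $m=n$. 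Since $\{\mathbb{Q}_0,\dots,\mathbb{Q}_n\}$ spans $\Pi_n^d$ and $\hat H_n$ is assumed invertible, one concludes that $\{\mathbb{Q}_n\}$ is orthogonal with respect to $v$ and that $\langle v,\mathbb{X}_n\mathbb{Q}_n^{t}\rangle$ is non-singular, so $v$ is quasi-definite. The only delicate point throughout is the bookkeeping of transposes and the use of the symmetry $\mathcal{K}_{n-1}^{t}=\mathcal{K}_{n-1}$ when passing between the columns $\mathsf{K}_{n-1}(\xi,\xi_i)$ and the matrix $\mathcal{K}_{n-1}$.
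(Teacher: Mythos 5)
Your proposal is correct, and everything except the invertibility of \eqref{I-Lambda-K} retraces the paper's own argument: the expansion $\mathbb{Q}_n=\mathbb{P}_n+\sum_{m<n}M_{n,m}\mathbb{P}_m$ with $M_{n,m}H_m=-\mathsf{Q}_n(\xi)\Lambda\mathsf{P}_m(\xi)^{t}$, the telescoping via \eqref{P-K1} to $\mathsf{K}_{n-1}(\xi,\mathbf{x})$, the evaluation at the nodes to reach $\mathsf{Q}_n(\xi)(I_N+\Lambda\mathcal{K}_{n-1})=\mathsf{P}_n(\xi)$ (the paper's \eqref{pn(c)}), the computation of $\hat H_n$, and, for the converse, the verification through the reproducing property, which you spell out in more detail than the paper (whose converse simply points back to the direct computation). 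Where you genuinely diverge is the crucial invertibility step, the one place where quasi-definiteness of $v$ enters. The paper argues by contradiction: a null vector $C$ of $I_N+\Lambda\mathcal{K}_{k-1}$ forces $\mathsf{P}_n(\xi)C=0$ for all $n\geqslant k$ via \eqref{pn(c)} and \eqref{P-K2}, so the discrete functional $\mathcal{L}=c_1\delta_{\xi_1}+\cdots+c_N\delta_{\xi_N}$ annihilates every $u$-orthogonal polynomial of degree at least $k$; a duality argument (asserted there without detail) then gives $\mathcal{L}=q(\mathbf{x})u$ with $\deg q = k-1$, and multiplying by a polynomial vanishing at all the nodes contradicts the quasi-definiteness of $u$. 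You instead compute the Gram matrix of $v$ in the basis $\{\mathbb{P}_0,\dots,\mathbb{P}_n\}$, namely $\mathbf{H}+\mathcal{P}\Lambda\mathcal{P}^{t}$ with $\mathbf{H}=\operatorname{diag}(H_0,\dots,H_n)$, apply the determinant lemma $\det(\mathbf{H}+\mathcal{P}\Lambda\mathcal{P}^{t})=\det\mathbf{H}\cdot\det(I_N+\Lambda\mathcal{K}_n)$ after telescoping $\mathcal{P}^{t}\mathbf{H}^{-1}\mathcal{P}=\mathcal{K}_n$, and invoke congruence with the moment matrix of $v$ --- note that this matrix is $\hat{\textbf{M}}_n$, not $\textbf{M}_n$ (which the paper reserves for $u$); this is a slip of notation only, since your argument clearly uses $\det\hat{\textbf{M}}_n\neq 0$. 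Your route is more elementary and self-contained, replacing the paper's duality lemma by pure linear algebra, and it buys something extra: since $\det\hat{\textbf{M}}_n$ is a nonzero multiple of $\det(I_N+\Lambda\mathcal{K}_n)$, one reads off that $v$ is quasi-definite \emph{if and only if} all the matrices \eqref{I-Lambda-K} are invertible, so in your framework the invertibility of the matrices \eqref{direct} in the converse is automatic rather than a separate hypothesis. What the paper's approach buys in exchange is that it stays entirely within the connection-coefficient framework and makes explicit how the quasi-definiteness of $u$ (through the nonexistence of a polynomial $q$ with $pqu=0$) intervenes.
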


\begin{proof}
Let us assume that $v$ is a quasi-def\/inite moment functional and let $\{\mathbb{Q}_n\}_{n\geqslant 0}$ be an OPS with respect to $v$. We can select an OPS $\{\mathbb{P}_n\}_{n\geqslant 0}$ with respect to~$u$ such that $\mathbb{Q}_n$ has the same leading coef\/f\/icient as $\mathbb{P}_n$, for $n\geqslant0$, in particular we have $\mathbb{Q}_0 = \mathbb{P}_0$.

From this assumption, the components of $\mathbb{Q}_n-\mathbb{P}_n$ are polynomials in $\Pi_{n-1}^d$ for $n\geqslant 1$, then, we can express them as linear combinations of orthogonal polynomials $\mathbb{P}_0, \mathbb{P}_1, \ldots, \mathbb{P}_{n-1}$. In vector-matrix notation, this means that
\begin{gather*}
\mathbb{Q}_n(\mathbf{x}) = \mathbb{P}_n(\mathbf{x}) + \sum_{j=0}^{n-1} M_j^n \mathbb{P}_j(\mathbf{x}),
\end{gather*}
where $M_j^n$ are matrices of size $r_n^d\times r_j^d$. These coef\/f\/icient matrices can be determined from the orthogonality of $\mathbb{P}_n$ and $\mathbb{Q}_n$. Indeed, $\langle v, \mathbb{Q}_n \mathbb{P}^{t}_j \rangle =0$ for $0 \leqslant j \leqslant n-1$, which shows, by def\/inition of $v$ and the fact that $\mathbb{P}_j$ is orthogonal,
\begin{gather*}
 M_j^n = \langle u, \mathbb{Q}_n \mathbb{P}^{t}_j \rangle H_j^{-1} = - \mathsf{Q}_n(\xi) \Lambda \mathsf{P}^{t}_j(\xi) H_j^{-1},
\end{gather*}
where $\mathsf{P}_j(\xi)$ is def\/ined as in \eqref{sP} and $\mathsf{Q}_n(\xi)= \big(\mathbb{Q}_n(\xi_1) | \mathbb{Q}_n(\xi_2) | \dots | \mathbb{Q}_n(\xi_N)\big)$ is the analogous matrix with $\mathbb{Q}_n(\xi_i)$ as its column vectors. Consequently, we obtain
\begin{gather}
\mathbb{Q}_n(\mathbf{x}) = \mathbb{P}_n(\mathbf{x}) - \sum_{j=0}^{n-1} \mathsf{Q}_n(\xi) \Lambda
 \mathsf{P}^{t}_j(\xi) H_j^{-1} \mathbb{P}_j(\mathbf{x}) = \mathbb{P}_n(\mathbf{x}) - \mathsf{Q}_n(\xi) \Lambda \mathsf{K}_{n-1}(\xi,\mathbf{x}),
 \label{QnPj}
\end{gather}
where the second equation follows from relation \eqref{P-K1}, which leads to a~telescopic sum that sums up to $\mathsf{K}_{n-1}(\xi,\mathbf{x})$. Setting $\mathbf{x}=\xi_i$, we obtain
\begin{gather*}
\mathbb{Q}_n(\xi_i) = \mathbb{P}_n(\xi_i) - \mathsf{Q}_n(\xi) \Lambda \mathsf{K}_{n-1}(\xi,\xi_i), \qquad 1 \leqslant i \leqslant N,
\end{gather*}
which by the def\/inition of $\mathcal{K}_{n-1}$ at \eqref{cK} leads to
\begin{gather*}
 \mathsf{Q}_n(\xi) = \mathsf{P}_n(\xi) - \mathsf{Q}_n(\xi) \Lambda \mathcal{K}_{n-1},
\end{gather*}
and therefore
\begin{gather} \label{pn(c)}
 \mathsf{Q}_n(\xi) (I_N + \Lambda \mathcal{K}_{n-1}) = \mathsf{P}_n(\xi).
\end{gather}
Next, we are going to show that the matrices $I_N + \Lambda \mathcal{K}_{n-1}$ are invertible. Assume that there exists an index $k$ such that $I_N + \Lambda \mathcal{K}_{k-1}$ is non regular, then there exists a vector $C = (c_1, c_2, \ldots, c_N)^{t}$ satisfying $(I_N + \Lambda \mathcal{K}_{k-1}) C = 0$. From \eqref{pn(c)} we deduce $\mathsf{P}_k(\xi) C = 0$, which implies $(I_N + \Lambda \mathcal{K}_{k}) C$ $= 0$ using~\eqref{P-K2},
and therefore we conclude
\begin{gather} \label{disc_moment}
\mathsf{P}_n(\xi) C = 0, \qquad n = k, k+1, \ldots.
\end{gather}
Let us consider the discrete moment functional $\mathcal{L} = c_1 \delta_{\xi_1} + \dots + c_N \delta_{\xi_N}$, then~\eqref{disc_moment} implies that $\mathcal{L}$ vanishes on every orthogonal polynomial of total degree $n \geqslant k$. Using duality we can deduce the existence of a polynomial $q(\mathbf{x})$ of total degree $k-1$ such that $\mathcal{L} = q(\mathbf{x}) u$. Let $p(\mathbf{x})$ be a non zero polynomial vanishing at $\xi_1, \xi_2, \ldots, \xi_N$, then we get $p(\mathbf{x}) q(\mathbf{x}) u = p(\mathbf{x}) \mathcal{L} = 0$, which contradicts the quasi-def\/inite character of $u$.

Now, solving for $\mathsf{Q}_n (\xi)$ in equation \eqref{pn(c)} we get
\begin{gather}\label{qn(c)}
 \mathsf{Q}_n(\xi) = \mathsf{P}_n(\xi) (I_N + \Lambda \mathcal{K}_{n-1})^{-1},
\end{gather}
and substituting this expression into \eqref{QnPj} establishes (\ref{ex-expl}).

Finally, from \eqref{ex-expl} and \eqref{qn(c)} we obtain
\begin{gather*}
 \hat{H}_n = \langle v, \mathbb{Q}_n \mathbb{Q}_n^{t}\rangle = \langle v, \mathbb{Q}_n \mathbb{P}_n^{t}\rangle =\langle u, \mathbb{Q}_n \mathbb{P}_n^{t}\rangle + \mathsf{Q}_n(\xi) \Lambda \mathsf{P}_n^{t}(\xi) \\
\hphantom{\hat{H}_n}{} = H_{n} + \mathsf{P}_n(\xi) (I_N + \Lambda \mathcal{K}_{n-1})^{-1} \Lambda \mathsf{P}_n^{t}(\xi),
\end{gather*}
which proves (\ref{direct}).

Conversely, if we def\/ine polynomials $\mathbb{Q}_n$ as in \eqref{ex-expl}, then above proof shows that $\mathbb{Q}_n$ is orthogonal with respect to~$v$. Since $\mathbb{Q}_n$ and~$\mathbb{P}_n$ have the same leading coef\/f\/icient, it is evident that~$\{\mathbb{Q}_n \}_{n \geqslant 0}$ is an OPS in~$\Pi^d$.
\end{proof}

From now on, let us assume that $v$ is a quasi-def\/inite moment functional and $\{\mathbb{Q}_n\}_{n\geqslant0}$ is an OPS with respect to $v$ as given in \eqref{ex-expl}. Then, the invertible matrix $\hat{H}_n=\langle v, \mathbb{Q}_n \mathbb{Q}^{t}_n \rangle$ can be expressed in terms of matrices involving only $\{\mathbb{P}_n\}_{n \geqslant 0}$, as we have shown in Theorem~\ref{main-theorem}. It turns out that this happens also for $\hat{H}_n^{-1}$.

\begin{Proposition}
In the conditions of Theorem {\rm \ref{main-theorem}}, for $n \geqslant 0$, the following identity holds,
\begin{gather} \label{inversa}
 \hat{H}^{-1}_n = H^{-1}_{n} - H^{-1}_{n} \mathsf{P}_n(\xi) (I_N + \Lambda \mathcal{K}_{n})^{-1} \Lambda \mathsf{P}_n^{t}(\xi) H^{-1}_{n}.
\end{gather}
\end{Proposition}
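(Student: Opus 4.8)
The plan is to read \eqref{direct} as a low-rank perturbation of $H_n$ and to invert it by the Sherman--Morrison--Woodbury formula. Writing $P=\mathsf{P}_n(\xi)\in\mathcal{M}_{r_n^d\times N}(\mathbb{R})$ and noting that $\Lambda=\operatorname{diag}\{\lambda_1,\ldots,\lambda_N\}$ is invertible because every $\lambda_i\neq0$, formula \eqref{direct} reads $\hat{H}_n=H_n+P\,D\,P^{t}$ with the $N\times N$ matrix $D=(I_N+\Lambda\mathcal{K}_{n-1})^{-1}\Lambda$. Woodbury then gives $\hat{H}_n^{-1}=H_n^{-1}-H_n^{-1}P\,(D^{-1}+P^{t}H_n^{-1}P)^{-1}P^{t}H_n^{-1}$, so the whole matter reduces to simplifying the inner $N\times N$ matrix $D^{-1}+P^{t}H_n^{-1}P$. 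The one nontrivial input is relation \eqref{P-K2}, which identifies $P^{t}H_n^{-1}P=\mathsf{P}_n^{t}(\xi)H_n^{-1}\mathsf{P}_n(\xi)=\mathcal{K}_n-\mathcal{K}_{n-1}$; since $\Lambda$ is invertible one rewrites $D^{-1}=\Lambda^{-1}+\mathcal{K}_{n-1}$, whence $D^{-1}+P^{t}H_n^{-1}P=\Lambda^{-1}+\mathcal{K}_n$. Pulling $\Lambda$ back out via $(\Lambda^{-1}+\mathcal{K}_n)^{-1}=(I_N+\Lambda\mathcal{K}_n)^{-1}\Lambda$ and substituting into the Woodbury expression yields exactly \eqref{inversa}.

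To avoid having to track the precise invertibility hypotheses under which Woodbury applies, I would actually present this as a \emph{direct verification}. Let $G_n$ denote the right-hand side of \eqref{inversa} and check $\hat{H}_n G_n=I_{r_n^d}$; since $\hat{H}_n$ is invertible (it is the Gram matrix of an OPS with respect to $v$, by Theorem~\ref{main-theorem}), a right inverse is the inverse. Abbreviating $E=(I_N+\Lambda\mathcal{K}_n)^{-1}\Lambda=(\Lambda^{-1}+\mathcal{K}_n)^{-1}$ and $W=\mathcal{K}_n-\mathcal{K}_{n-1}$, expansion of $(H_n+PDP^{t})(H_n^{-1}-H_n^{-1}PEP^{t}H_n^{-1})$ produces $I_{r_n^d}+P(D-E-DWE)P^{t}H_n^{-1}$, where the term $-DWE$ comes from the product $PDP^{t}H_n^{-1}PEP^{t}H_n^{-1}$ after inserting $P^{t}H_n^{-1}P=W$. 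So everything collapses provided $D-E=DWE$, and this is immediate from $W=E^{-1}-D^{-1}$, since $D(E^{-1}-D^{-1})E=DE^{-1}E-DD^{-1}E=D-E$.

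The step I expect to be the crux is recognizing that \eqref{P-K2} is precisely the relation that makes the telescoping $\mathcal{K}_{n-1}\to\mathcal{K}_n$ work; once $P^{t}H_n^{-1}P$ is replaced by $\mathcal{K}_n-\mathcal{K}_{n-1}$, every cross term cancels algebraically. The only bookkeeping needed is that $\Lambda$ is invertible (so that $D$ and $E$ may be written as $(\Lambda^{-1}+\mathcal{K}_{n-1})^{-1}$ and $(\Lambda^{-1}+\mathcal{K}_n)^{-1}$) and that $I_N+\Lambda\mathcal{K}_n$ is invertible for $n\geqslant0$, which is exactly \eqref{I-Lambda-K} of Theorem~\ref{main-theorem} read with index $n+1$.
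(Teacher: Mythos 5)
Your proposal is correct and follows exactly the route the paper takes: the paper's proof is the single sentence that \eqref{inversa} is a direct consequence of the Sherman--Morrison--Woodbury identity applied to \eqref{direct}, and your computation (in particular the key substitution $\mathsf{P}_n^{t}(\xi)H_n^{-1}\mathsf{P}_n(\xi)=\mathcal{K}_n-\mathcal{K}_{n-1}$ from \eqref{P-K2}, which turns $\Lambda^{-1}+\mathcal{K}_{n-1}$ into $\Lambda^{-1}+\mathcal{K}_n$) simply fills in the details the paper leaves implicit. The supplementary direct verification and the remark that invertibility of $I_N+\Lambda\mathcal{K}_n$ is \eqref{I-Lambda-K} shifted by one are correct and careful, but do not change the approach.
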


\begin{proof}Formula \eqref{inversa} is a direct consequence of the Sherman--Morrison--Woodbury identity for the inverse of the perturbation of a non singular matrix (see \cite[p.~51]{GvL}).
\end{proof}

Our next result gives explicit formulas for the reproducing kernels associated with~$v$, which we denote by
\begin{gather*}
P_m(v; \mathbf{x},\mathbf{y}) = \mathbb{Q}^{t}_m(\mathbf{x}) \hat{H}^{-1}_m \mathbb{Q}_m(\mathbf{y}),\\
K_n(v; \mathbf{x},\mathbf{y}) = \sum_{m=0}^n P_m(v;\mathbf{x},\mathbf{y}) = \sum_{m=0}^n\mathbb{Q}^{t}_m(\mathbf{x}) \hat{H}^{-1}_m \mathbb{Q}_m(\mathbf{y}).
\end{gather*}

\begin{Lemma}
Let $u$ be a quasi-definite moment functional, $\mathcal{K}_{m}$ defined by \eqref{cK}, and let $\Lambda = \operatorname{diag}\{\lambda_1,\ldots,\lambda_N\}$, with $\lambda_i\neq 0$, $i=1, 2, \ldots, N$. Then, for $m\geqslant 0$, $(I_N + \Lambda \mathcal{K}_{m})^{-1} \Lambda$ is a symmetric matrix.
\end{Lemma}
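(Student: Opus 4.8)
The plan is to exploit the symmetry of the two building blocks and reduce the claim to a one-line matrix identity. Since $\Lambda=\operatorname{diag}\{\lambda_1,\ldots,\lambda_N\}$ is diagonal it is symmetric, $\Lambda^t=\Lambda$; and because the kernel is symmetric in its two arguments (as recalled in Section~\ref{section2}, $K_m(u;\mathbf{x},\mathbf{y})=K_m(u;\mathbf{y},\mathbf{x})$, which follows from the symmetry of each $H_j^{-1}$ in the representation $K_m(u;\mathbf{x},\mathbf{y})=\sum_{j=0}^m\mathbb{P}_j(\mathbf{x})^t H_j^{-1}\mathbb{P}_j(\mathbf{y})$), the matrix $\mathcal{K}_m=\big(K_m(u;\xi_i,\xi_j)\big)_{i,j=1}^N$ defined in~\eqref{cK} satisfies $\mathcal{K}_m^t=\mathcal{K}_m$. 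These two symmetries are the only structural facts I will use.

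First I would set $M=(I_N+\Lambda\mathcal{K}_m)^{-1}\Lambda$ and compute its transpose. Using $(XY)^t=Y^tX^t$, $(X^{-1})^t=(X^t)^{-1}$, and the two symmetries above, together with $(I_N+\Lambda\mathcal{K}_m)^t=I_N+\mathcal{K}_m^t\Lambda^t=I_N+\mathcal{K}_m\Lambda$, I obtain
\begin{gather*}
M^t=\Lambda^t\big((I_N+\Lambda\mathcal{K}_m)^t\big)^{-1}=\Lambda(I_N+\mathcal{K}_m\Lambda)^{-1}.
\end{gather*}
Thus the assertion $M=M^t$ is equivalent to the push-through identity $(I_N+\Lambda\mathcal{K}_m)^{-1}\Lambda=\Lambda(I_N+\mathcal{K}_m\Lambda)^{-1}$. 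This last identity I would establish from the elementary relation $(I_N+\Lambda\mathcal{K}_m)\Lambda=\Lambda+\Lambda\mathcal{K}_m\Lambda=\Lambda(I_N+\mathcal{K}_m\Lambda)$, by left-multiplying with $(I_N+\Lambda\mathcal{K}_m)^{-1}$ and right-multiplying with $(I_N+\mathcal{K}_m\Lambda)^{-1}$; this yields $M=M^t$ at once.

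The only point requiring care—and the one I expect to be the main obstacle—is that the push-through step presupposes the invertibility of $I_N+\mathcal{K}_m\Lambda$, which is not literally part of the hypotheses (those concern $I_N+\Lambda\mathcal{K}_m$, invertible in the setting of Theorem~\ref{main-theorem}). I would dispose of this by the classical determinant identity $\det(I_N+\Lambda\mathcal{K}_m)=\det(I_N+\mathcal{K}_m\Lambda)$, a special case of $\det(I+AB)=\det(I+BA)$, so that invertibility of $I_N+\Lambda\mathcal{K}_m$ forces invertibility of $I_N+\mathcal{K}_m\Lambda$ as well. With both inverses legitimate, the transpose computation and the push-through identity combine to give the symmetry of $(I_N+\Lambda\mathcal{K}_m)^{-1}\Lambda$ for every $m\geqslant 0$.
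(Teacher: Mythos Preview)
Your argument is correct, but it takes a longer route than the paper's. The paper exploits the hypothesis $\lambda_i\neq 0$ (i.e., $\Lambda$ is invertible) directly: since $\Lambda^{-1}$ exists, one has $(I_N+\Lambda\mathcal{K}_m)^{-1}\Lambda=\big(\Lambda^{-1}(I_N+\Lambda\mathcal{K}_m)\big)^{-1}=(\Lambda^{-1}+\mathcal{K}_m)^{-1}$, which is the inverse of a symmetric matrix and hence symmetric. This is a genuine one-liner and avoids any transpose computation, push-through identity, or auxiliary determinant argument. By contrast, your approach never uses the invertibility of $\Lambda$ and would in fact go through for any symmetric $\Lambda$ (provided $I_N+\Lambda\mathcal{K}_m$ is invertible), so it is slightly more general; the price is the extra bookkeeping of the push-through step and the separate justification that $I_N+\mathcal{K}_m\Lambda$ is invertible. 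Both are valid, but the paper's trick of writing the matrix as the inverse of $\Lambda^{-1}+\mathcal{K}_m$ is the cleaner way to see the symmetry here.
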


\begin{proof}
$(I_N + \Lambda \mathcal{K}_{m})^{-1} \Lambda$ is a symmetric matrix as it is the inverse of the symmetric matrix $\Lambda^{-1} (I_N + \Lambda \mathcal{K}_{m}) = \Lambda^{-1} + \mathcal{K}_{m}$.
\end{proof}

Next theorem establishes a relation between the kernels of both families. Similar tools as those used in the proof of Theorem~2.5 in~\cite{DFPPX10} can be applied to obtain this result.

\begin{Theorem}Suppose that we are in the conditions of Theorem~{\rm \ref{main-theorem}}. Then, for $m\geqslant 0$, we get
\begin{gather*}
P_m(v; \mathbf{x},\mathbf{y}) = P_m(u; \mathbf{x},\mathbf{y}) - \mathsf{K}_m^{t}(\xi,\mathbf{x}) (I_N + \Lambda \mathcal{K}_{m})^{-1} \Lambda
 \mathsf{K}_m(\xi,\mathbf{y}) \\
\hphantom{P_m(v; \mathbf{x},\mathbf{y})=}{} + \mathsf{K}_{m-1}^{t}(\xi,\mathbf{x}) (I_N + \Lambda \mathcal{K}_{m-1})^{-1}
 \Lambda \mathsf{K}_{m-1}(\xi,\mathbf{y}),
\end{gather*}
where we assume $\mathsf{K}_{-1}(\mathbf{x},\mathbf{y}) \equiv 0$. Furthermore, for $n \geqslant 0$,
\begin{gather}\label{kernel}
K_n(v; \mathbf{x},\mathbf{y}) = K_n(u; \mathbf{x},\mathbf{y}) - \mathsf{K}_n^{t}(\xi,\mathbf{x}) (I_N + \Lambda \mathcal{K}_{n})^{-1} \Lambda
\mathsf{K}_n(\xi,\mathbf{y}).
\end{gather}
\end{Theorem}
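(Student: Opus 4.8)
The plan is to prove the telescoping relation \eqref{kernel} first, and then recover the formula for $P_m(v;\mathbf{x},\mathbf{y})$ by taking the single-degree difference $K_n(v;\cdot,\cdot)-K_{n-1}(v;\cdot,\cdot)$. This ordering is convenient because the summed kernel is exactly the object whose reproducing property and independence from the chosen OPS are already established, and because the correction term in \eqref{kernel} involves only $\mathcal{K}_n$ rather than a combination of $\mathcal{K}_n$ and $\mathcal{K}_{n-1}$.

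The starting point is the definition $K_n(v;\mathbf{x},\mathbf{y})=\sum_{m=0}^n \mathbb{Q}_m^{t}(\mathbf{x})\hat H_m^{-1}\mathbb{Q}_m(\mathbf{y})$. First I would substitute the explicit expressions from Theorem~\ref{main-theorem}: the connection formula \eqref{ex-expl} for $\mathbb{Q}_m$, together with $\mathsf{Q}_m(\xi)=\mathsf{P}_m(\xi)(I_N+\Lambda\mathcal{K}_{m-1})^{-1}$ from \eqref{qn(c)} wherever the values at the mass points appear, and the inverse $\hat H_m^{-1}$ from \eqref{inversa}. The guiding principle is that every quantity attached to $v$ should be rewritten in terms of $u$-data, namely $\mathbb{P}_m$, $H_m$, $\mathsf{P}_m(\xi)$, the vector kernels $\mathsf{K}_m(\xi,\mathbf{x})$ of \eqref{sK}, and the numerical matrix $\mathcal{K}_m$ of \eqref{cK}. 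The key algebraic identities I expect to use repeatedly are the two relations \eqref{P-K1} and \eqref{P-K2}, which convert $\mathsf{P}_m^{t}(\xi)H_m^{-1}\mathbb{P}_m(\mathbf{x})$ and $\mathsf{P}_m^{t}(\xi)H_m^{-1}\mathsf{P}_m(\xi)$ into single-degree kernel differences $\mathsf{K}_m-\mathsf{K}_{m-1}$ and $\mathcal{K}_m-\mathcal{K}_{m-1}$; the symmetry of $(I_N+\Lambda\mathcal{K}_m)^{-1}\Lambda$ from the preceding Lemma will let me treat the $\mathbf{x}$- and $\mathbf{y}$-blocks symmetrically and combine terms.

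Carrying this out, I would group the expanded single-degree term $\mathbb{Q}_m^{t}(\mathbf{x})\hat H_m^{-1}\mathbb{Q}_m(\mathbf{y})$ into the bare reproducing piece $P_m(u;\mathbf{x},\mathbf{y})=\mathbb{P}_m^{t}(\mathbf{x})H_m^{-1}\mathbb{P}_m(\mathbf{y})$ plus correction terms. Using \eqref{P-K1} to replace each factor $\mathsf{P}_m^{t}(\xi)H_m^{-1}\mathbb{P}_m(\mathbf{x})$ by $\mathsf{K}_m(\xi,\mathbf{x})-\mathsf{K}_{m-1}(\xi,\mathbf{x})$ and \eqref{P-K2} to replace the middle factor $\mathsf{P}_m^{t}(\xi)H_m^{-1}\mathsf{P}_m(\xi)$ by $\mathcal{K}_m-\mathcal{K}_{m-1}$, the corrections should consolidate, after the Sherman--Morrison--Woodbury substitution for $\hat H_m^{-1}$, into a single expression that is a difference of a degree-$m$ term and a degree-$(m-1)$ term — precisely the claimed formula for $P_m(v;\mathbf{x},\mathbf{y})$ with $\mathsf{K}_{-1}\equiv 0$. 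The resolvent identity $(I_N+\Lambda\mathcal{K}_m)^{-1}\Lambda-(I_N+\Lambda\mathcal{K}_{m-1})^{-1}\Lambda=-(I_N+\Lambda\mathcal{K}_m)^{-1}\Lambda(\mathcal{K}_m-\mathcal{K}_{m-1})(I_N+\Lambda\mathcal{K}_{m-1})^{-1}\Lambda$ is what I expect to knit the various middle-factor contributions together, and this is the main obstacle: keeping the bookkeeping between the level $m$ and level $m-1$ resolvents consistent so that the cross terms cancel and only the clean telescoping difference survives. Once the per-degree identity for $P_m(v;\cdot,\cdot)$ is in hand, summing over $m=0,\dots,n$ collapses telescopically, leaving only the $m=n$ term of the subtracted part, which gives \eqref{kernel}.
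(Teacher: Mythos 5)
Your proposal is correct and is essentially the intended argument: the paper itself gives no proof of this theorem, only the remark that the tools of Theorem~2.5 of~\cite{DFPPX10} apply, and your computation is precisely that verification transported to the quasi-definite setting. The ingredients you list --- the connection formula \eqref{ex-expl} with \eqref{qn(c)}, the Sherman--Morrison--Woodbury form \eqref{inversa} of $\hat H_m^{-1}$, the difference relations \eqref{P-K1} and \eqref{P-K2}, the symmetry of $(I_N+\Lambda\mathcal{K}_m)^{-1}\Lambda$, and the resolvent identity $(I_N+\Lambda\mathcal{K}_{m-1})^{-1}\Lambda-(I_N+\Lambda\mathcal{K}_{m})^{-1}\Lambda=(I_N+\Lambda\mathcal{K}_{m})^{-1}\Lambda(\mathcal{K}_m-\mathcal{K}_{m-1})(I_N+\Lambda\mathcal{K}_{m-1})^{-1}\Lambda$ --- are exactly what is needed, and the expansion does collapse to $P_m(u;\mathbf{x},\mathbf{y})-\mathsf{K}_m^{t}(\xi,\mathbf{x})(I_N+\Lambda\mathcal{K}_m)^{-1}\Lambda\mathsf{K}_m(\xi,\mathbf{y})+\mathsf{K}_{m-1}^{t}(\xi,\mathbf{x})(I_N+\Lambda\mathcal{K}_{m-1})^{-1}\Lambda\mathsf{K}_{m-1}(\xi,\mathbf{y})$, after which summing over $m$ telescopes to \eqref{kernel} (your opening sentence reverses this order, but the execution you describe is the per-degree identity first, which is the cleaner route).
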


\section{Christof\/fel modif\/ication}\label{section4}

Christof\/fel modif\/ication of a quasi-def\/inite moment functional $u$ will be studied in this section. We def\/ine the {\it Christoffel modification} of $u$ as the moment functional
\begin{gather*}
v = \lambda(\mathbf{x}) u,
\end{gather*}
acting as follows
\begin{gather*}
\langle v, p(\mathbf{x})\rangle = \langle \lambda(\mathbf{x}) u, p(\mathbf{x}) \rangle = \langle u, \lambda(\mathbf{x}) p(\mathbf{x}) \rangle,
\qquad \forall\, p(\mathbf{x})\in\Pi^d.
\end{gather*}

We will work with the particular case when the polynomial $\lambda(\mathbf{x})$ has total degree~$2$. We must remark that in several variables there exist polynomials of second degree that they can not be factorized as a product of two polynomials of degree~$1$, and then this case is not a trivial extension of the case considered in~\cite{APPR14}. Using a block matrix formalism for the three term relations, this case have been also considered in~\cite{AM2} and~\cite{AM3} for arbitrary degree polynomials.

Let $u$ be a quasi-def\/inite moment functional, and let $\lambda(\mathbf{x})$ be a polynomial in $d$ variables of total degree $2$. In terms of the canonical basis, this polynomial can be written as
\begin{gather}\label{lambda}
\lambda(\mathbf{x}) = \textbf{a}_2 \mathbb{X}_2 +\textbf{a}_1 \mathbb{X}_1+\textbf{a}_0 \mathbb{X}_0= \sum_{i=1}^d \sum_{j=i}^d a^{(2)}_{ij} x_i
x_j + \sum_{i=1}^d a^{(1)}_i x_i +a^{(0)},
\end{gather}
where $\textbf{a}_k\in\mathcal{M}_{1\times r_k^d}(\mathbb{R})$, for $k=0,1,2$, whose explicit expressions are
\begin{gather*}
\textbf{a}_2 = \big(a^{(2)}_{11}, a^{(2)}_{12},\ldots,a^{(2)}_{1d}, a^{(2)}_{22},\ldots,a^{(2)}_{2d},\ldots,a^{(2)}_{dd}\big),\\
\textbf{a}_1= \big(a^{(1)}_1, a^{(1)}_2,\ldots,a^{(1)}_d\big),\\
\textbf{a}_0 = (a^{(0)}),
\end{gather*}
with the conditions $|\textbf{a}_2| = \sum\limits_{i=1}^d \sum\limits_{j=i}^d |a^{(2)}_{ij}| \neq 0$, and
\begin{gather}\label{lambda1}
\langle u, \lambda(\mathbf{x})\rangle\neq 0.
\end{gather}

Observe that the f\/irst moment of $v$ is given by
\begin{gather*}
\hat{\mu}_0 = \langle v,1\rangle=\langle u, \lambda(\mathbf{x})\rangle,
\end{gather*}
and using \eqref{lambda1}, we get $\hat{\mu}_0\neq 0$.

First we are going to describe the relations between the moment matrices of both functionals $u$ and $v$. Taking into account that, for $h\geqslant 0$, we get
\begin{gather*}
x_j \mathbb{X}_h = L_{h,j} \mathbb{X}_{h+1},\qquad x_i x_j \mathbb{X}_h = L_{h,j} L_{h+1,i} \mathbb{X}_{h+2},
\end{gather*}
the $r^d_h\times r^d_k$ block of moments for the functional~$v$, $\hat{\textbf{m}}_{h,k}$, can be expressed in terms of the block of moments for the functional~$u$, $\textbf{m}_{h,k}$, in the following way
\begin{gather*}
\hat{\textbf{m}}_{h,k} = \langle v, \mathbb{X}_h \mathbb{X}_k^t\rangle = \langle \lambda(\mathbf{x}) u, \mathbb{X}_h \mathbb{X}_k^t\rangle =
\langle u, \lambda(\mathbf{x}) \mathbb{X}_h \mathbb{X}_k^t\rangle \\
\hphantom{\hat{\textbf{m}}_{h,k}}{} = a^{(0)} \textbf{m}_{h,k} + A_{h,1} \textbf{m}_{h+1,l} + A_{h,2} \textbf{m}_{h+2,k},
\end{gather*}
where
\begin{gather}\label{A_k}
A_{h,1} = \sum_{i=1}^d a^{(1)}_i L_{h,i}, \qquad A_{h,2} = \sum_{i=1}^d \sum_{j=i}^d a^{(2)}_{ij} L_{h,j} L_{h+1,i}
\end{gather}
are matrices of orders $r_h^d\times r_{h+1}^d$ and $r_h^d\times r_{h+2}^d$, respectively, and $A_{h,2}$ has full rank. If we def\/ine the block matrices
\begin{gather*}
\textbf{A}_{n,1} = \left(\begin{array}{@{}cccc|c@{}}
0 & A_{0,1} & & & 0 \\
 & 0 & A_{1,1} & & 0 \\
 & & \ddots & \ddots & \vdots \\
 & & & 0 & A_{n,1}
\end{array}\right),\qquad
\textbf{L}_{n} = \left(\begin{array}{@{}cccc|c@{}}
I_{r^d_0} & & & & 0 \\
 & I_{r^d_1} & & & 0 \\
 & & \ddots & & \vdots\\
 & & & I_{r^d_n} & 0
\end{array}\right),
\end{gather*}
both of dimension $\textbf{r}^d_n\times \textbf{r}^d_{n+1}$, and
\begin{gather*}
\textbf{A}_{n,2} = \left(\begin{array}{@{}cccccc|c@{}}
0 & 0 & A_{0,2} & & & & 0 \\
 & 0 & 0 & A_{1,2} & & & 0 \\
 & & 0 & 0 & A_{2,2} & & 0 \\
 & & & \ddots & \ddots & \ddots & \vdots \\
 & & & & 0 & 0 & A_{n,2}
\end{array}\right)
\end{gather*}
of dimension $\textbf{r}^d_n\times \textbf{r}^d_{n+2}$, then we can write the {\it moment matrix} for the functional~$v$ as the following perturbation of the original moment matrix
\begin{gather*}
\hat{\textbf{M}}_n = \textbf{a}_0 \textbf{M}_n +\textbf{A}_{n,1} \textbf{M}_{n+1} \textbf{L}_{n}^t + \textbf{A}_{n,2} \textbf{M}_{n+2} \textbf{L}_{n+1}^t \textbf{L}_{n}^t.
\end{gather*}

If $u$ and $v$ are quasi-def\/inite, we want to relate both orthogonal polynomial systems $\{\mathbb{P}_n \}_{n \geqslant 0}$ and $\{\mathbb{Q}_n \}_{n \geqslant 0}$ associated with $u$ and $v$, respectively. As usual in this paper, for $n\geqslant 0$, we denote $H_n=\langle u, \mathbb{P}_n \mathbb{P}_n^t\rangle$, and $\hat{H}_n=\langle u, \mathbb{Q}_n \mathbb{Q}_n^t\rangle$, both symmetric and invertible matrices.

\begin{Theorem}\label{teo2}
Let $u$ and $v$ be two quasi-definite moment functionals, and let $\{\mathbb{P}_n \}_{n \geqslant 0}$ and $\{\mathbb{Q}_n \}_{n \geqslant 0}$ be monic OPS associated with $u$ and $v$, respectively. The following statements are equivalent:
\begin{enumerate}\itemsep=0pt
\item[$(1)$] There exists a polynomial $\lambda(\mathbf{x})$ of exact degree two such that
\begin{gather*}
v =\lambda(\mathbf{x}) u.
\end{gather*}

\item[$(2)$] For $n\geqslant 1$, there exist matrices $M_{n} \in {\mathcal{M}}_{r^d_{n}\times r^d_{n-1}}(\mathbb{R})$, $N_{n} \in {\mathcal{M}}_{r^d_{n}\times r^d_{n-2}}(\mathbb{R})$, with $N_{2}\not\equiv0$, such that
\begin{gather}\label{RelOPS}
\mathbb{P}_n = \mathbb{Q}_n + M_{n} \mathbb{Q}_{n-1} + N_{n} \mathbb{Q}_{n-2}.
\end{gather}
\end{enumerate}
\end{Theorem}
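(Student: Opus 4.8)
The plan is to prove the two implications separately, using the polynomial multiplication rules $x_j \mathbb{X}_h = L_{h,j}\mathbb{X}_{h+1}$ and $x_i x_j \mathbb{X}_h = L_{h,j} L_{h+1,i}\mathbb{X}_{h+2}$ together with the orthogonality characterization of both families.

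For the implication $(1)\Rightarrow(2)$, I would argue by degrees. Since $\{\mathbb{Q}_n\}_{n\geqslant 0}$ is an OPS for $v$, its entries form a basis of $\Pi^d$, so each entry of $\mathbb{P}_n$ admits a unique expansion in $\mathbb{Q}_0,\ldots,\mathbb{Q}_n$; because both systems are monic, the top coefficient is $I_{r^d_n}$, giving
\begin{gather*}
\mathbb{P}_n = \mathbb{Q}_n + \sum_{j=0}^{n-1} M^n_j \mathbb{Q}_j .
\end{gather*}
The coefficients are recovered by orthogonality against $v$: for $0\leqslant j\leqslant n-1$,
\begin{gather*}
M^n_j \hat H_j = \langle v, \mathbb{P}_n \mathbb{Q}_j^t\rangle = \langle u, \lambda(\mathbf{x}) \mathbb{P}_n \mathbb{Q}_j^t\rangle = \langle u, \mathbb{P}_n \big(\lambda(\mathbf{x})\mathbb{Q}_j\big)^t\rangle .
\end{gather*}
The key point is that $\lambda(\mathbf{x})\mathbb{Q}_j$ has total degree $j+2$, so by orthogonality of $\mathbb{P}_n$ with respect to $u$ this pairing vanishes whenever $n>j+2$, i.e.\ whenever $j<n-2$. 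Hence only $M^n_{n-1}$ and $M^n_{n-2}$ survive; renaming them $M_n$ and $N_n$ yields \eqref{RelOPS}. Finally, to see $N_2\not\equiv 0$, I would isolate the leading (degree-two) contribution: the coefficient $N_n$ is governed by $\langle u, \mathbb{P}_n(\lambda\mathbb{Q}_{n-2})^t\rangle \hat H_{n-2}^{-1}$, and the only degree-$n$ part of $\lambda\mathbb{Q}_{n-2}$ comes from multiplying the leading term of $\mathbb{Q}_{n-2}$ by the quadratic part $\mathbf{a}_2\mathbb{X}_2$ of $\lambda$, which is nonzero by the hypothesis $|\mathbf{a}_2|\neq 0$; pairing against $\mathbb{P}_n$ and using $H_n$ invertible shows $N_2$ cannot vanish identically.

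For the converse $(2)\Rightarrow(1)$, I would start from \eqref{RelOPS} and compute $\langle v,\mathbb{P}_n \mathbb{Q}_m^t\rangle$ for $m\leqslant n$. Substituting \eqref{RelOPS} and using that $\{\mathbb{Q}_n\}$ is orthogonal for $v$, I get $\langle v,\mathbb{P}_n \mathbb{Q}_m^t\rangle = 0$ for $m<n-2$ and explicit nonzero values for $m\in\{n-2,n-1,n\}$. The goal is to manufacture the multiplier $\lambda$. The natural candidate is built from the first connection coefficients; concretely, I expect the quadratic polynomial to be $\lambda(\mathbf{x}) = \mathbb{P}_2(\mathbf{x})^t\,\mathbf{c} + (\text{lower degree})$ for a suitable constant vector, chosen so that $\langle u,\lambda \mathbb{P}_0\rangle = \hat\mu_0$ and the degree-two leading part is forced by $N_2$; this is the promised ``polynomial of second degree constructed in terms of the first connection coefficients.'' Defining $\tilde v = \lambda(\mathbf{x})u$, I would then verify that $\{\mathbb{Q}_n\}$ is orthogonal with respect to $\tilde v$ by checking $\langle \tilde v, \mathbb{Q}_n \mathbb{Q}_m^t\rangle = \langle u, \lambda \mathbb{Q}_n\mathbb{Q}_m^t\rangle = 0$ for $m\neq n$, which reduces via \eqref{RelOPS} to the already-established vanishing of the $u$-pairings and the degree count $\deg(\lambda\mathbb{Q}_m)=m+2$. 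Since a quasi-definite functional is determined by its OPS, $\tilde v$ and $v$ share the same monic OPS and hence coincide, giving $v=\lambda(\mathbf{x})u$.

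The main obstacle is the converse direction, specifically the explicit construction of $\lambda$ and the verification that it has exact degree two. The forward direction is a clean degree-counting argument, but in the converse one must reverse-engineer the multiplier from the connection data $M_n$, $N_n$ and confirm both that its leading quadratic coefficient $\mathbf{a}_2$ is nonzero (so the degree is genuinely two, using $N_2\not\equiv 0$) and that the whole of $\{\mathbb{Q}_n\}_{n\geqslant 0}$—not just low degrees—is orthogonal for the reconstructed $\tilde v$. I would expect this to require a consistency check propagating \eqref{RelOPS} across all $n$, likely leaning on the three term relations \eqref{3TR} and the relations $x_i x_j\mathbb{X}_h = L_{h,j}L_{h+1,i}\mathbb{X}_{h+2}$ to keep the bookkeeping of leading coefficients under control.
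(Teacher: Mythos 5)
Your forward implication $(1)\Rightarrow(2)$ is essentially the paper's own argument: expand $\mathbb{P}_n$ in the $\mathbb{Q}_j$'s, recover the coefficients as $M^n_j=\langle v,\mathbb{P}_n\mathbb{Q}_j^t\rangle\hat{H}_j^{-1}$, kill everything with $j<n-2$ by the degree count $\deg\big(\lambda(\mathbf{x})\mathbb{Q}_j\big)=j+2$, and get $N_2\not\equiv0$ from $N_2\hat{H}_0=\langle u,\lambda(\mathbf{x})\mathbb{P}_2\rangle=H_2\textbf{a}_2^t$ with $H_2$ invertible and $|\textbf{a}_2|\neq0$. That part is correct.

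The converse is where the gap lies, and it is twofold. First, you never actually produce $\lambda$: ``$\mathbb{P}_2^t\mathbf{c}$ plus lower degree for a suitable $\mathbf{c}$'' is not a construction, and the verification cannot start without the explicit choice $\lambda(\mathbf{x})=\hat{H}_0\big(N_2^tH_2^{-1}\mathbb{P}_2+M_1^tH_1^{-1}\mathbb{P}_1+H_0^{-1}\mathbb{P}_0\big)$. Second, and more seriously, your plan to verify that $\{\mathbb{Q}_n\}_{n\geqslant0}$ is orthogonal for $\tilde v=\lambda(\mathbf{x})u$ ``by the degree count $\deg(\lambda\mathbb{Q}_m)=m+2$'' does not work: that degree count controls pairings of $\lambda\mathbb{Q}_m$ against the $u$-orthogonal family $\{\mathbb{P}_n\}_{n\geqslant0}$, but $\mathbb{Q}_n$ is not $u$-orthogonal, so writing $\mathbb{Q}_n=\mathbb{P}_n-M_n\mathbb{Q}_{n-1}-N_n\mathbb{Q}_{n-2}$ and iterating leaves $\langle u,\lambda\mathbb{P}_j\mathbb{Q}_m^t\rangle$ terms with $j\leqslant m+2$ that do not vanish for degree reasons; the induction you would set up has no base case at $n=m+1,m+2$. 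The missing idea --- which is the entire content of the paper's converse --- is the dual-basis expansion $v=\sum_{n\geqslant0}\mathbb{P}_n^tH_n^{-1}E_n\,u$ with $E_n^t=\langle v,\mathbb{P}_n^t\rangle$ (as in Lemma~1 of~\cite{APPR14}): substituting \eqref{RelOPS} and using $\langle v,\mathbb{Q}_j^t\rangle=0$ for $j\geqslant1$ gives $E_n=0$ for $n\geqslant3$, so the series truncates after three terms and exhibits $v=\lambda(\mathbf{x})u$ with the $\lambda$ above, of exact degree two precisely because $N_2\not\equiv0$. Equivalently, one can match moments by checking $\langle\lambda(\mathbf{x})u-v,\mathbb{P}_n^t\rangle=0$ for every $n$ --- but the check must be run against the $\mathbb{P}$-basis and uses the explicit coefficients of $\lambda$, not a degree count. (Your closing appeal to ``a quasi-definite functional is determined by its OPS,'' normalized via $\langle u,\lambda\rangle=\hat{\mu}_0$, is fine in principle, but it sits downstream of the step that is missing.)
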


\begin{proof} First, we prove (1) $\Rightarrow$ (2). Let us assmue that $v=\lambda(\mathbf{x}) u$ where $\lambda(\mathbf{x})=\textbf{a}_2 \mathbb{X}_2 +\textbf{a}_1 \mathbb{X}_1+\textbf{a}_0 \mathbb{X}_0$, and $|\textbf{a}_2|\neq 0$.

Since $\{\mathbb{Q}_n \}_{n \geqslant 0}$ is a basis of the space of polynomials, and $\mathbb{P}_n$ and $\mathbb{Q}_n$ are monic, then
\begin{gather*}
\mathbb{P}_n=\mathbb{Q}_n + \sum_{j=0}^{n-1} M^n_{j} \mathbb{Q}_{j},
\end{gather*}
where $M^n_{j}\in\mathcal{M}_{r^d_n\times r^d_j}(\mathbb{R})$, and
\begin{gather*}
M_j^n = \langle v, \mathbb{P}_{n} \mathbb{Q}_{j}^t \rangle \hat{H}_j^{-1}, \qquad 0\leqslant j \leqslant n-1.
\end{gather*}
Given that the degree of $\lambda(\mathbf{x})$ is $2$, from the orthogonality of $\mathbb{P}_{n}$ we get
\begin{gather*}
\langle v, \mathbb{P}_{n} \mathbb{Q}_{j}^t \rangle = \langle \lambda(\mathbf{x})u, \mathbb{P}_{n}
\mathbb{Q}_{j}^t \rangle = \langle u, \mathbb{P}_{n} \lambda(\mathbf{x}) \mathbb{Q}_{j}^t \rangle =0,
\qquad \text{for } j<n-2,
\end{gather*}
and then $M_j^n=0$ for $j<n-2$. Therefore, \eqref{RelOPS} holds with $M^n_{n-1} = M_n$ and $M^n_{n-2} = N_n$.

To compute $N_2$, we use $\mathbb{P}_2 = \mathbb{Q}_2 + M_{2} \mathbb{Q}_{1} + N_{2} \mathbb{Q}_{0}$, and thus
\begin{gather*}
\langle v, \mathbb{P}_2 \mathbb{Q}_0^t\rangle = \langle v, (\mathbb{Q}_2 + M_{2} \mathbb{Q}_{1} + N_{2} \mathbb{Q}_{0})\mathbb{Q}_0^t\rangle = N_2 \hat{H}_{0}.
\end{gather*}
On the other hand,
\begin{gather*}
\langle v, \mathbb{P}_2 \mathbb{Q}_0^t\rangle = \langle u, \lambda(\mathbf{x}) \mathbb{P}_2\rangle =
\langle u, \mathbb{P}_2(\mathbb{X}_2^t \textbf{a}^t_2 + \mathbb{X}_1^t \textbf{a}^t_1 +
\mathbb{X}_0^t \textbf{a}^t_0) \rangle = H_2 \textbf{a}^t_2,
\end{gather*}
and therefore
\begin{gather*}
N_2 = H_2 \textbf{a}^t_2 \hat{H}_{0}^{-1}\in \mathcal{M}_{r_2^d\times 1}(\mathbb{R}),
\end{gather*}
has full rank since $|\textbf{a}_2|\neq 0$. So, $N_2\not\equiv 0$ since $N_2$ is a column matrix.

Conversely, we see (2)~$\Rightarrow$~(1). Using the dual basis, and the same reasoning as in the proof of Lemma~1 in~\cite{APPR14}, we obtain
\begin{gather*}
v=\sum_{n=0}^{+\infty} \mathbb{P}_n^t H_n^{-1} E_n u,
\end{gather*}
where $E_n^t = \langle v,\mathbb{P}_n^t \rangle$, $n\geqslant 0$. By \eqref{RelOPS}, we get
\begin{gather*}
E_0^t = \langle v, \mathbb{Q}_0^t \rangle = \hat{H}_0 \neq 0, \\
E_1^t = \langle v, \mathbb{Q}_1^t + \mathbb{Q}_{0}^t M_1^t \rangle =\hat{H}_0 M_1^t, \\
E_2^t = \langle v, \mathbb{Q}_2^t + \mathbb{Q}_{1}^t M_2^t+ \mathbb{Q}_{0}^t N_2^t \rangle = \hat{H}_0 N_2^t,\\
E_n^t = \langle v, \mathbb{Q}_n^t + \mathbb{Q}_{n-1}^t M_{n}^t+ \mathbb{Q}_{n-2}^t N_{n}^t \rangle = 0, \qquad n\geqslant3.
\end{gather*}
Then,
\begin{gather*}
v = \big(\mathbb{P}_2^t H_2^{-1} N_2 \hat{H}_0 + \mathbb{P}_1^t H_1^{-1} M_1 \hat{H}_0 + \mathbb{P}_0^t H_0^{-1} \hat{H}_0\big) u,
\end{gather*}
or equivalently, there exists a polynomial
\begin{gather*}
\lambda(\mathbf{x}) = \hat{H}_0 \big(N_2^t H_2^{-1} \mathbb{P}_2 + M_1^t H_1^{-1} \mathbb{P}_1 + H_0^{-1} \mathbb{P}_0\big),
\end{gather*}
such that $v=\lambda(\mathbf{x}) u$. Since $N_2\not\equiv 0$, then $\lambda(\mathbf{x})$ has exact degree~2.

Moreover, we will prove that $N_n$ has full rank for $n\geqslant 2$. In fact, using \eqref{RelOPS}, we get
\begin{gather*}
\langle v, \mathbb{P}_n \mathbb{Q}_{n-2}^t \rangle = \langle v,
[\mathbb{Q}_n + M_{n} \mathbb{Q}_{n-1} + N_{n} \mathbb{Q}_{n-2}]\mathbb{Q}_{n-2}^t \rangle =
 N_{n} \hat{H}_{n-2}.
\end{gather*}
On the other hand,
\begin{gather*}
\langle v, \mathbb{P}_n \mathbb{Q}_{n-2}^t \rangle = \langle u, \mathbb{P}_n \lambda(\mathbf{x})
\mathbb{Q}_{n-2}^t \rangle = \sum_{i=1}^d \sum_{j=i}^d
a^{(2)}_{ij} \langle u, \mathbb{P}_n x_i x_j \mathbb{Q}_{n-2}^t \rangle\\
\hphantom{\langle v, \mathbb{P}_n \mathbb{Q}_{n-2}^t \rangle}{} = \sum_{i=1}^d \sum_{j=i}^d
a^{(2)}_{ij} \langle u, \mathbb{P}_n x_i x_j \mathbb{X}_{n-2}^t \rangle = \sum_{i=1}^d \sum_{j=i}^d
a^{(2)}_{ij} \langle u, \mathbb{P}_n \mathbb{X}_{n}^t \rangle L_{n-1,i}^t L_{n-2,j}^t \\
\hphantom{\langle v, \mathbb{P}_n \mathbb{Q}_{n-2}^t \rangle}{} = \sum_{i=1}^d \sum_{j=i}^d a^{(2)}_{ij} H_n L_{n-1,i}^t L_{n-2,j}^t = H_n A_{n-2,2}^t,
\end{gather*}
where $A_{n-2,2}$ was def\/ined in \eqref{A_k}. Then,
\begin{gather}\label{Ndef}
N_n \hat{H}_{n-2} = H_n A_{n-2,2}^t, \qquad n\geqslant 2.
\end{gather}
Therefore $N_n$ is full rank for $n \geqslant 2$ since $H_n$ and $\hat{H}_{n-2}$ are invertible matrices, and the rank of a matrix is invariant by multiplication times non-singular matrices \cite[p.~13]{HJ85}.
\end{proof}

\begin{Remark} When both moment functionals are quasi-def\/inite, that is, when both OPS $\{\mathbb{P}_n\}_{n\geqslant 0}$ and $\{\mathbb{Q}_n\}_{n\geqslant 0}$ exist, the orthogonality condition of the second family with respect to $v = \lambda(\mathbf{x}) u$ trivially implies that the polynomial entries in $\lambda(\mathbf{x}) \mathbb{Q}_n$ are quasi-orthogonal with respect to the f\/irst moment functional~$u$. In fact, there exist matrices of adequate size such that
\begin{gather*}
\lambda(\mathbf{x}) \mathbb{Q}_n = \sum_{k=0}^{n+\deg\lambda(\mathbf{x})} A_k^n \mathbb{P}_k,
\end{gather*}
where
\begin{gather*}
A_k^n H_k = \langle u, \lambda(\mathbf{x}) \mathbb{Q}_n \mathbb{P}_k\rangle = \langle \lambda(\mathbf{x}) u, \mathbb{Q}_n \mathbb{P}_k\rangle =
\langle v, \mathbb{Q}_n \mathbb{P}_k\rangle.
\end{gather*}
Then, $A_k^n =0$, for $0\leqslant k \leqslant n-1$, and therefore
\begin{gather*}
\lambda(\mathbf{x}) \mathbb{Q}_n = \sum_{k=n}^{n+\deg\lambda(\mathbf{x})} A_k^n \mathbb{P}_k.
\end{gather*}
The matrix version of this relation is the f\/irst identity in Proposition~2.7 of~\cite{AM2}.
\end{Remark}

Now, we assume that $u$ is a quasi-def\/inite moment functional, and $\{\mathbb{P}_n \}_{n \geqslant 0}$ is the monic OPS associated with $u$. Then $\{\mathbb{P}_n \}_{n \geqslant 0}$ satisfy the three term relations \eqref{3TR} with the rank condi\-tions~\eqref{cond_rank_1},~\eqref{cond_rank_2}. Def\/ining recursively the monic polynomial system $\{\mathbb{Q}_n \}_{n \geqslant 0}$ by means of~\eqref{RelOPS}, we want to deduce its relation with $u$ as well as conditions for its quasi-def\/initeness.

\begin{Theorem}\label{T42}
Let $\{\mathbb{P}_n \}_{n \geqslant 0}$ be a monic OPS associated with the quasi-definite moment functional $u$, and let $\{M_n\}_{n \geqslant 1}$ and $\{N_n\}_{n \geqslant 2}$ be two sequences of matrices of orders $r_n^d\times r_{n-1}^d$ and $r_n^d\times r_{n-2}^d$ respectively, such that $N_2\not\equiv 0$. Define recursively the monic polynomial system
\begin{gather*}
\mathbb{Q}_0 = \mathbb{P}_0,\\
\mathbb{Q}_1 = \mathbb{P}_1 - M_1 \mathbb{P}_{0},\\
\mathbb{Q}_n = \mathbb{P}_n - M_n \mathbb{Q}_{n-1} - N_n \mathbb{Q}_{n-2}, \qquad n\geqslant 2.
\end{gather*}
Then $\{\mathbb{Q}_n \}_{n \geqslant 0}$ is a monic OPS associated with a quasi-definite moment functional~$v$, satisfying the three term relation
\begin{gather}\label{RR3TQ}
x_i \mathbb{Q}_n(\mathbf{x}) = L_{n,i} \mathbb{Q}_{n+1}(\mathbf{x}) + \hat{B}_{n,i} \mathbb{Q}_n(\mathbf{x}) + \hat{C}_{n,i} \mathbb{Q}_{n-1}(\mathbf{x}),\qquad 1 \leqslant i \leqslant d,
\end{gather}
with initial conditions $\mathbb{Q}_{-1}(\mathbf{x}) = 0$, $\hat{C}_{-1,i} = 0$, if and only if
\begin{gather}
\hat{B}_{n,i} = B_{n,i} - M_n L_{n-1,i} + L_{n,i} M_{n+1}, \label{RR1}\\
\hat{C}_{n,i} = C_{n,i} - M_n \hat{B}_{n-1,i} + B_{n,i} M_n - N_n L_{n-2,i} + L_{n,i} N_{n+1}, \label{RR2}\\
M_n \hat{C}_{n-1,i} + N_n \hat{B}_{n-2,i} = C_{n,i} M_{n-1} + B_{n,i} N_n,\label{RR3}\\
C_{n,i} N_{n-1} = N_n \hat{C}_{n-2,i}. \label{RR3b}
\end{gather}
In such a case, there exists a polynomial of exact degree two given by
\begin{gather*}
\lambda(\mathbf{x}) = N_2^t H_2^{-1}\mathbb{P}_2 + M_1^t H_1^{-1} \mathbb{P}_1 + H_0^{-1} \mathbb{P}_0,
\end{gather*}
satisfying
\begin{gather*}
v = \lambda(\mathbf{x}) u.
\end{gather*}
\end{Theorem}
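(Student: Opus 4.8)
Theorem \ref{T42} gives a converse-type construction: starting from a monic OPS $\{\mathbb{P}_n\}$ for a quasi-definite $u$ and prescribing matrix sequences $\{M_n\}$, $\{N_n\}$ (with $N_2\not\equiv0$), the recursively defined family $\{\mathbb{Q}_n\}$ is orthogonal with respect to some quasi-definite $v$ precisely when the four matrix identities \eqref{RR1}--\eqref{RR3b} hold, and in that case $v=\lambda(\mathbf{x})u$ for an explicit second-degree $\lambda$.

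The plan is to prove the biconditional by propagating the known three term relations \eqref{3TR} for $\{\mathbb{P}_n\}$ through the defining recursion $\mathbb{Q}_n=\mathbb{P}_n-M_n\mathbb{Q}_{n-1}-N_n\mathbb{Q}_{n-2}$. First I would use Favard's theorem (the Theorem in Section \ref{section2}) as the organizing principle: by that result, $\{\mathbb{Q}_n\}$ is a monic OPS for some quasi-definite functional \emph{if and only if} it satisfies a three term relation of the form \eqref{RR3TQ} with $A_{n,i}=L_{n,i}$ (forced by monicity) and with the rank conditions \eqref{cond_rank_1}--\eqref{cond_rank_2} holding. Since $A_{n,i}=L_{n,i}$ automatically has full rank, the substantive content is just the \emph{existence} of matrices $\hat B_{n,i}$, $\hat C_{n,i}$ making \eqref{RR3TQ} hold. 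So the proof reduces to: show that $x_i\mathbb{Q}_n$, when expanded, lies in the span of $\mathbb{Q}_{n+1},\mathbb{Q}_n,\mathbb{Q}_{n-1}$ with leading coefficient $L_{n,i}$ exactly under \eqref{RR1}--\eqref{RR3b}.

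Concretely, I would compute $x_i\mathbb{Q}_n$ two ways and match. Starting from $\mathbb{Q}_n=\mathbb{P}_n-M_n\mathbb{Q}_{n-1}-N_n\mathbb{Q}_{n-2}$, multiply by $x_i$ and substitute the three term relation \eqref{3TR} for $x_i\mathbb{P}_n$ and (inductively) the relations \eqref{RR3TQ} for $x_i\mathbb{Q}_{n-1}$ and $x_i\mathbb{Q}_{n-2}$. Rewriting everything back in terms of $\mathbb{P}_{n+1},\mathbb{P}_n,\dots$ via the recursion (or equivalently collecting all terms on the $\mathbb{Q}$-basis), I obtain an expression $x_i\mathbb{Q}_n=L_{n,i}\mathbb{Q}_{n+1}+\hat B_{n,i}\mathbb{Q}_n+\hat C_{n,i}\mathbb{Q}_{n-1}+(\text{lower-order remainder})$. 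The remainder must vanish identically for \eqref{RR3TQ} to hold. Collecting the coefficient of each $\mathbb{Q}_{n+1},\mathbb{Q}_n,\mathbb{Q}_{n-1},\mathbb{Q}_{n-2},\mathbb{Q}_{n-3}$ produces exactly five matching conditions: the $\mathbb{Q}_{n+1}$ coefficient gives $L_{n,i}$ (automatic), the $\mathbb{Q}_n$ coefficient gives the \emph{definition} \eqref{RR1} of $\hat B_{n,i}$, the $\mathbb{Q}_{n-1}$ coefficient gives \eqref{RR2}, and the vanishing of the $\mathbb{Q}_{n-2}$ and $\mathbb{Q}_{n-3}$ coefficients gives the two \emph{constraints} \eqref{RR3} and \eqref{RR3b}. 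Thus \eqref{RR1}--\eqref{RR2} are read off as definitions while \eqref{RR3}--\eqref{RR3b} are the genuine compatibility requirements equivalent to the existence of a clean three term relation; this establishes both directions of the biconditional simultaneously.

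Once \eqref{RR3TQ} is in hand, Favard's theorem yields a quasi-definite $v$ with monic OPS $\{\mathbb{Q}_n\}$. To identify $v$ with $\lambda(\mathbf{x})u$, I would invoke the implication (2)$\Rightarrow$(1) already proven in Theorem~\ref{teo2}: the relation \eqref{RelOPS} is exactly $\mathbb{P}_n=\mathbb{Q}_n+M_n\mathbb{Q}_{n-1}+N_n\mathbb{Q}_{n-2}$, which is the inverted form of our recursion, and the dual-basis argument there produces the stated $\lambda(\mathbf{x})=N_2^tH_2^{-1}\mathbb{P}_2+M_1^tH_1^{-1}\mathbb{P}_1+H_0^{-1}\mathbb{P}_0$ of exact degree two (exactness following from $N_2\not\equiv0$). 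I expect the main obstacle to be purely bookkeeping: correctly tracking the five coefficient-collections in the recursive expansion, especially using the commutation identity \eqref{L-i-j} for the $L_{n,i}$ blocks and the relation \eqref{C1} linking $C_{n,i}$ to $A_{n-1,i}^t=L_{n-1,i}^t$, so that the cross terms $M_nL_{n-1,i}$, $L_{n,i}M_{n+1}$, $N_nL_{n-2,i}$, $L_{n,i}N_{n+1}$ land in the right places. The delicate point is ensuring the induction on $n$ closes — that the $\hat B$, $\hat C$ produced at stage $n$ feed correctly into stages $n+1$ and $n+2$ so that conditions \eqref{RR3} and \eqref{RR3b} are simultaneously consistent across all $n$.
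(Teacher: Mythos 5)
Your treatment of the equivalence between \eqref{RR3TQ} and \eqref{RR1}--\eqref{RR3b} is essentially the paper's argument: expand $x_i(\mathbb{Q}_n+M_n\mathbb{Q}_{n-1}+N_n\mathbb{Q}_{n-2})$ once through the three term relation \eqref{3TR} for $\{\mathbb{P}_n\}$ and once through \eqref{RR3TQ}, match coefficients in the $\mathbb{Q}$-basis (with the sufficiency direction closed by induction on $n$), and finally invoke Theorem~\ref{teo2}, $(2)\Rightarrow(1)$, to produce $\lambda(\mathbf{x})$. That part is sound modulo the bookkeeping you acknowledge.

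The genuine gap is in the claims ``Since $A_{n,i}=L_{n,i}$ automatically has full rank, the substantive content is just the existence of matrices $\hat B_{n,i}$, $\hat C_{n,i}$'' and ``Once \eqref{RR3TQ} is in hand, Favard's theorem yields a quasi-definite $v$.'' Favard's theorem as stated in Section~\ref{section2} requires, besides the three term relation, the rank conditions \eqref{cond_rank_1}--\eqref{cond_rank_2} on \emph{both} families of matrices, in particular $\operatorname{rank}\hat C_{n+1,i}=r_n^d$ and $\operatorname{rank}\hat C_{n+1}^t=r_{n+1}^d$. These are not forced by monicity and are nowhere verified in your argument. A three term relation with full-rank $A_{n,i}=L_{n,i}$ alone yields, for the functional $v$ defined by $\langle v,1\rangle=1$ and $\langle v,\mathbb{Q}_n\rangle=0$ for $n\geqslant1$, only the block orthogonality $\langle v,\mathbb{Q}_n\mathbb{Q}_m^t\rangle=0$ for $n\neq m$; it does not give the invertibility of $\hat H_n=\langle v,\mathbb{Q}_n\mathbb{Q}_n^t\rangle$, which is exactly what quasi-definiteness means. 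The paper closes this hole with a separate argument: it observes that the identity \eqref{Ndef}, namely $N_n\hat H_{n-2}=H_nA_{n-2,2}^t$, still holds in this setting, where $A_{n-2,2}=\sum_{i\leqslant j}a^{(2)}_{ij}L_{n-2,j}L_{n-1,i}$ has full rank $r_{n-2}^d$ precisely because $\lambda$ has exact degree two (equivalently $N_2\not\equiv0$); the rank inequality $r_{n-2}^d=\operatorname{rank}(N_n\hat H_{n-2})\leqslant\operatorname{rank}\hat H_{n-2}\leqslant r_{n-2}^d$ then forces $\hat H_{n-2}$ to be invertible. Without this step (or an equivalent direct verification of the $\hat C$ rank conditions, which would anyway presuppose invertibility of the $\hat H_n$ via \eqref{C1}), the quasi-definiteness of $v$ is unproved; note this is also where the hypothesis $N_2\not\equiv0$ does its real work.
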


\begin{proof}
Replacing \eqref{RelOPS} in \eqref{3TR} we get
\begin{gather*}
x_i(\mathbb{Q}_n + M_n \mathbb{Q}_{n-1} + N_n \mathbb{Q}_{n-2}) = L_{n,i} \mathbb{Q}_{n+1} + (L_{n,i} M_{n+1} + B_{n,i}) \mathbb{Q}_{n} \\
\hphantom{x_i(\mathbb{Q}_n + M_n \mathbb{Q}_{n-1} + N_n \mathbb{Q}_{n-2}) =}{} + (L_{n,i} N_{n+1} + B_{n,i} M_n + C_{n,i}) \mathbb{Q}_{n-1} \\
\hphantom{x_i(\mathbb{Q}_n + M_n \mathbb{Q}_{n-1} + N_n \mathbb{Q}_{n-2}) =}{}
 + (B_{n,i} N_n + C_{n,i} M_{n-1})\mathbb{Q}_{n-2} +C_{n,i} N_{n-1} \mathbb{Q}_{n-3}.
\end{gather*}
On the other hand, if $\{\mathbb{Q}_n \}_{n \geqslant 0}$ satisfy \eqref{RR3TQ} then
\begin{gather*}
x_i(\mathbb{Q}_n + M_n \mathbb{Q}_{n-1} + N_n \mathbb{Q}_{n-2}) =
L_{n,i} \mathbb{Q}_{n+1} + (M_n L_{n-1,i} + \hat{B}_{n,i})\mathbb{Q}_{n}\\
\hphantom{x_i(\mathbb{Q}_n + M_n \mathbb{Q}_{n-1} + N_n \mathbb{Q}_{n-2}) =}{} +(\hat{C}_{n,i} + M_n \hat{B}_{n-1,i} + N_n L_{n-2,i}) \mathbb{Q}_{n-1}\\
 \hphantom{x_i(\mathbb{Q}_n + M_n \mathbb{Q}_{n-1} + N_n \mathbb{Q}_{n-2}) =}{} + (M_n \hat{C}_{n-1,i} + N_n \hat{B}_{n-2,i}) \mathbb{Q}_{n-2} + N_n\hat{C}_{n-2,i} \mathbb{Q}_{n-3}.
\end{gather*}
Subtracting both expressions we get \eqref{RR1}--\eqref{RR3b}.

Conversely, we will prove the three term relation for $\{\mathbb{Q}_n \}_{n \geqslant 0}$ using induction. In fact, for $n=1$, multiplying relation \eqref{RelOPS} times $L_{0,i}$, we get
\begin{gather*}
x_i \mathbb{Q}_0 = L_{0,i} \mathbb{Q}_1 + (B_{0,i}+ L_{0,i} M_1) \mathbb{Q}_0.
\end{gather*}
Let us suppose that \eqref{RR3TQ} is satisf\/ied for $n-1$. Multiplying \eqref{RelOPS} for $n+1$ times $L_{n,i}$ and applying the three term relation for $\{\mathbb{P}_n\}_{n \geqslant 0}$, we get
\begin{gather*}
x_i \mathbb{P}_n - B_{n,i} \mathbb{P}_n -C_{n,i} \mathbb{P}_{n-1} = L_{n,i} \mathbb{Q}_{n+1} + L_{n,i} M_{n+1} \mathbb{Q}_{n} + L_{n,i} N_{n+1} \mathbb{Q}_{n-1}.
\end{gather*}
Replacing again \eqref{RelOPS} in the left hand side, using induction hypotheses and relations \eqref{RR1}--\eqref{RR3b} we get the announced three term relations for $\{\mathbb{Q}_n \}_{n \geqslant 0}$.

Now, we def\/ine the moment functional $v$ as
\begin{gather*}
\langle v, 1\rangle = 1, \qquad \langle v, \mathbb{Q}_n\rangle = 0, \qquad n\geqslant 1.
\end{gather*}
Since $\{\mathbb{Q}_n \}_{n \geqslant 0}$ is a basis of $\Pi^d$, then $v$ is well def\/ined. Following \cite[p.~74]{DX14}, since~$L_{n,i}$ and~$L_n$ have full rank, then
\begin{gather*}
\langle v, \mathbb{Q}_n \mathbb{Q}^t_m\rangle =0, \qquad n\neq m.
\end{gather*}
Finally, we need to prove that $\hat{H}_n = \langle v, \mathbb{Q}_n \mathbb{Q}^t_n\rangle$, is an invertible matrix, for $n\geqslant 0$.

From the def\/inition of $v$, $\hat{H}_0 = \langle v, \mathbb{Q}_0 \mathbb{Q}^t_0\rangle = \langle v, 1\rangle =1$. On the other hand, since expression~\eqref{Ndef} still holds in this case,
\begin{gather*}
N_n \hat{H}_{n-2} = H_n A_{n-2,2}^t, \qquad n\geqslant 2,
\end{gather*}
using the properties of the rank of a product of matrices, we get
\begin{gather*}
\operatorname{rank}\big(N_n \hat{H}_{n-2}\big) = \operatorname{rank}\big(H_n A_{n-2,2}^t\big) = \operatorname{rank} A_{n-2,2}^t = r_{n-2}^d.
\end{gather*}
Therefore
\begin{gather*}r_{n-2}^d = \operatorname{rank}\big(N_n \hat{H}_{n-2}\big)\leqslant \min\big\{\operatorname{rank}N_n, \operatorname{rank}\hat{H}_{n-2}\big\} \leqslant \operatorname{rank}\hat{H}_{n-2} \leqslant r_{n-2}^d,
\end{gather*}
and then $\operatorname{rank}\hat{H}_{n-2} = r^d_{n-2}$. In this way, the moment functional $v$ is quasi-def\/inite and $\{\mathbb{Q}_n \}_{n \geqslant 0}$ is a monic OPS associated with~$v$. Using Theorem~\ref{teo2}, both moment functionals are related by means of a~Christof\/fel modif\/ication
\begin{gather*}
v = \lambda(\mathbf{x}) u,
\end{gather*}
where $ \lambda(\mathbf{x}) = N_2^t H_2^{-1} \mathbb{P}_2 + M_1^t H_1^{-1} \mathbb{P}_1 + H_0^{-1} \mathbb{P}_0$.
\end{proof}

\begin{Remark} Observe that relation \eqref{RR3b} always holds when $\{\mathbb{Q}_n \}_{n \geqslant 0}$ is an OPS. In fact, using~\eqref{C1}, we get
\begin{gather*}
C_{n,i} H_{n-1} = H_n L^t_{n-1,i} \qquad \text{and} \qquad \hat{C}_{n,i} \hat{H}_{n-1} = \hat{H}_n L^t_{n-1,i},
\end{gather*}
and jointly with \eqref{Ndef}, it follows
\begin{gather*}
C_{n,i} N_{n-1} - N_n \hat{C}_{n-2,i} = H_n\big[L^t_{n-1,i} A^t_{n-3,2} - A^t_{n-2,2} L^t_{n-3,i}\big]\hat{H}^{-1}_{n-3}.
\end{gather*}
On the other hand, from \eqref{A_k},
\begin{gather*}
L_{n-3,i} A_{n-2,2} - A_{n-3,2} L_{n-1,i} = L_{n-3,i} \left(\sum_{k=1}^d\sum_{j=k}^d a^{(2)}_{kj} L_{n-2,j}L_{n-1,k}\right) \\
\hphantom{L_{n-3,i} A_{n-2,2} - A_{n-3,2} L_{n-1,i} =}{} - \left(\sum_{k=1}^d\sum_{j=k}^d a^{(2)}_{kj} L_{n-3,j} L_{n-2,k}\right)L_{n-1,i}= 0,
\end{gather*}
from property \eqref{L-i-j}. However, if $\{\mathbb{Q}_n \}_{n \geqslant 0}$ is not orthogonal, then we can not assume {\it a priori} that $\hat{H}_{n-3}$ is non singular, and so \eqref{RR3b} does not necessarily hold.
\end{Remark}

\begin{Remark} In the case when both functionals $u$ and $v$ are quasi-def\/inite, Theorem~\ref{T42} can be rewritten by using a matrix formalism, as it is done in \cite{AM2, AM3}. For $1\leqslant i\leqslant d$, we denote by
\begin{gather*}
\mathcal{J}_i = \begin{pmatrix}
B_{0,i} & L_{0,i} & & & \bigcirc \\
C_{1,i} & B_{1,i} & L_{1,i} & & \\
 & C_{2,i} & B_{2,i} & \ddots & \\
 & & \ddots & \ddots & \\
\bigcirc & & & &
\end{pmatrix}, \qquad
\hat{\mathcal{J}}_i = \begin{pmatrix}
\hat{B}_{0,i} & L_{0,i} & & & \bigcirc \\
\hat{C}_{1,i} & \hat{B}_{1,i} & L_{1,i} & & \\
 & \hat{C}_{2,i} & \hat{B}_{2,i} & \ddots & \\
 & & \ddots & \ddots & \\
\bigcirc & & & &
\end{pmatrix},
\end{gather*}
the respective \emph{block Jacobi matrices} associated with the three term relations \cite[p.~82]{DX14}. Also we def\/ine the \emph{lower triangular block matrix} with identity matrices as diagonal blocks
\begin{gather*}
\mathcal{M} =
\begin{pmatrix}
I & & & & &\bigcirc \\
M_1 & I & & & & \\
N_2 & M_2 & I & & & \\
 & N_3 & M_3 & I & & \\
 & & \ddots & \ddots & \ddots & \\
\bigcirc & & & & &
\end{pmatrix},
\end{gather*}
where $M_n$, $N_n$ are def\/ined in Theorem~\ref{teo2}. Then, formulas \eqref{RR1}--\eqref{RR3b} can be expressed as the matrix product
\begin{gather*}
\mathcal{J}_i \mathcal{M} = \mathcal{M} \hat{\mathcal{J}}_i, \qquad i=1, 2, \ldots, d.
\end{gather*}
The explicit expressions of the matrices $M_n$ and $N_n$ given in Theorem \ref{teo2} lead to matrix relations of Proposition~2.4 in \cite{AM2}.
\end{Remark}

\subsection{Centrally symmetric functionals}

Following \cite[p.~76]{DX14}, a moment functional $u$ is called \emph{centrally symmetric} if it satisf\/ies
\begin{gather*}
\langle u, x^\nu\rangle = 0, \qquad \nu \in \mathbb{N}^d, \qquad |\nu| \quad \textrm{is an odd integer}.
\end{gather*}
This def\/inition constitutes the multivariate extension of the symmetry for a moment functional.

Quasi-def\/inite centrally symmetric moment functionals can be characterized in terms of the matrix coef\/f\/icients of the three term relations~\eqref{3TR}. In fact, $u$ is centrally symmetric if and only if $B_{n,i} = 0$ for all $n \geqslant0$ and $1\leqslant i \leqslant d$.

As a consequence, an orthogonal polynomial of degree $n$ with respect to $u$ is a sum of monomials of even degree if $n$ is even and a sum of monomials of odd degree if $n$ is odd.

Let us suppose that $u$ is a quasi-def\/inite centrally symmetric moment functional, and we def\/ine its Christof\/fel modif\/ication by
\begin{gather*} v = \lambda(\mathbf{x}) u,
\end{gather*}
where $\lambda(\mathbf{x})$ is a polynomial of second degree as \eqref{lambda}. Then

\begin{Proposition}
$v$ is centrally symmetric if and only if $\textbf{a}_1=0$, that is,
\begin{gather*}
\lambda(\mathbf{x}) = \textbf{a}_2 \mathbb{X}_2 +\textbf{a}_0 \mathbb{X}_0= \sum_{i=1}^d \sum_{j=i}^d a^{(2)}_{ij} x_i x_j + a^{(0)}.
\end{gather*}
\end{Proposition}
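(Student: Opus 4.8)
The plan is to test central symmetry of $v$ directly on monomials and to exploit a parity decomposition of $\lambda(\mathbf{x})$. By definition $v$ is centrally symmetric precisely when $\langle v,\mathbf{x}^\nu\rangle=0$ for every $\nu$ with $|\nu|$ odd, and since $\langle v,\mathbf{x}^\nu\rangle=\langle u,\lambda(\mathbf{x})\mathbf{x}^\nu\rangle$, everything reduces to understanding this last quantity. First I would split $\lambda$ into its even and odd parts, $\lambda=\lambda_{\mathrm{ev}}+\lambda_{\mathrm{odd}}$ with $\lambda_{\mathrm{ev}}=\textbf{a}_2\mathbb{X}_2+\textbf{a}_0\mathbb{X}_0$ (the terms of degree $0$ and $2$) and $\lambda_{\mathrm{odd}}=\textbf{a}_1\mathbb{X}_1$ (the degree $1$ part). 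When $|\nu|$ is odd, every monomial occurring in $\lambda_{\mathrm{ev}}\mathbf{x}^\nu$ has odd total degree, so $\langle u,\lambda_{\mathrm{ev}}\mathbf{x}^\nu\rangle=0$ by the central symmetry of $u$, whereas every monomial in $\lambda_{\mathrm{odd}}\mathbf{x}^\nu$ has even total degree. Thus the whole question collapses to
\[
\langle v,\mathbf{x}^\nu\rangle=\langle u,(\textbf{a}_1\mathbb{X}_1)\mathbf{x}^\nu\rangle=\sum_{i=1}^d a_i^{(1)}\langle u,x_i\mathbf{x}^\nu\rangle,\qquad |\nu|\ \text{odd}.
\]
The implication $\Leftarrow$ is then immediate: if $\textbf{a}_1=0$ the right-hand side vanishes for every odd $\nu$, so $v$ is centrally symmetric.

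For the implication $\Rightarrow$ I would not need the full family of odd $\nu$, only the degree-one monomials. Taking $\mathbf{x}^\nu=x_j$ for $j=1,\ldots,d$, the displayed identity gives $\sum_i a_i^{(1)}\langle u,x_ix_j\rangle=0$, that is, in block-moment notation, $\textbf{a}_1\,\textbf{m}_{1,1}=0$, where $\textbf{m}_{1,1}=\langle u,\mathbb{X}_1\mathbb{X}_1^t\rangle$. The key step, and the only place where the hypotheses on $u$ genuinely enter, is to show that $\textbf{m}_{1,1}$ is non-singular. Here central symmetry of $u$ forces the first-order moments to vanish, so $\textbf{m}_{0,1}=\textbf{m}_{1,0}^t=0$ and the moment matrix $\textbf{M}_1$ is block diagonal,
\[
\textbf{M}_1=\begin{pmatrix}\textbf{m}_{0,0}&0\\0&\textbf{m}_{1,1}\end{pmatrix},\qquad \Delta_1^d=\det\textbf{M}_1=\textbf{m}_{0,0}\,\det\textbf{m}_{1,1}.
\]
Since $u$ is quasi-definite we have $\Delta_1^d\neq 0$, whence $\det\textbf{m}_{1,1}\neq0$. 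Multiplying $\textbf{a}_1\textbf{m}_{1,1}=0$ on the right by $\textbf{m}_{1,1}^{-1}$ then yields $\textbf{a}_1=0$, which completes the argument.

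I expect the parity bookkeeping in the first paragraph to be entirely routine; the one substantive point is the recognition that central symmetry together with quasi-definiteness guarantees invertibility of the $1$-block $\textbf{m}_{1,1}$, which is exactly what upgrades the linear relation $\textbf{a}_1\textbf{m}_{1,1}=0$ into the desired conclusion $\textbf{a}_1=0$. No deeper structural input about $\lambda(\mathbf{x})$ beyond its degree-two form is required.
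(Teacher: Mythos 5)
Your proof is correct and complete. The paper actually states this Proposition without any proof, so there is nothing to compare against; your argument — the parity split of $\lambda$ giving the easy direction, plus the observation that central symmetry forces $\textbf{m}_{0,1}=0$ so that quasi-definiteness of $u$ (via $\det\textbf{M}_1=\textbf{m}_{0,0}\det\textbf{m}_{1,1}\neq 0$) makes $\textbf{m}_{1,1}$ invertible and upgrades $\textbf{a}_1\textbf{m}_{1,1}=0$ to $\textbf{a}_1=0$ — is exactly the right level of generality, since it does not need $v$ to be quasi-definite.
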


If $v$ is quasi-def\/inite and centrally symmetric, then relation \eqref{RelOPS} is given by
\begin{gather}
\mathbb{P}_0 = \mathbb{Q}_0,\qquad \mathbb{P}_1 = \mathbb{Q}_1,\qquad \mathbb{P}_n = \mathbb{Q}_n + N_{n} \mathbb{Q}_{n-2}, \qquad n\ge2,\label{sim}
\end{gather}
where $N_n = H_n A_{n-2,2}^t \hat{H}_{n-2}^{-1}$, $n\geqslant 2$.

\section{Examples}\label{section5}

\subsection{Two modif\/ications on the ball}

Let us denote by
\begin{gather*}
\mathbb{B}^d =\big\{\mathbf{x}\in \mathbb{R}^d\colon \|\mathbf{x}\| \leqslant 1\big\} \qquad \textrm{and}
\qquad \mathbb{S}^{d-1}= \big\{\xi\in \mathbb{R}^d\colon \|\xi\| = 1\big\},
\end{gather*}
the unit ball and the unit sphere on $\mathbb{R}^d$, respectively, where
\begin{gather*}
\|\mathbf{x}\| = \sqrt{x_1^2 + x_2^2 + \cdots + x_d^2},
\end{gather*}
denotes the usual Euclidean norm. Consider the weight function
\begin{gather*}
W_\mu(\mathbf{x}) = \big(1-\|\mathbf{x}\|^2\big)^{\mu-1/2},\qquad \mu > -1/2, \qquad \mathbf{x} \in \mathbb{B}^d.
\end{gather*}
Associated with $W_\mu(\mathbf{x})$, we def\/ine the usual inner product on the unit ball
\begin{gather*}
\langle f,g\rangle_\mu = \omega_\mu \int_{\mathbb{B}^d} f(\mathbf{x}) g(\mathbf{x}) W_\mu(\mathbf{x}) d\mathbf{x},
\end{gather*}
where the normalizing constant
\begin{gather*}
\omega_\mu = \left[\int_{\mathbb{B}^d} W_\mu(\mathbf{x}) d\mathbf{x}\right]^{-1} = \frac{\Gamma(\mu + (d+1)/2)}{\pi^{d/2}\Gamma(\mu+1/2)}
\end{gather*}
is chosen in order to have $\langle 1,1\rangle_\mu = 1$.

The associated moment functional is def\/ined by means of its moments
\begin{gather*}
\langle u_\mu, \mathbf{x}^\nu\rangle = \omega_\mu \int_{\mathbb{B}^d} \mathbf{x}^\nu
W_\mu(\mathbf{x}) d\mathbf{x} = \omega_\mu \int_{\mathbb{B}^d} \mathbf{x}^\nu \big(1-\|\mathbf{x}\|^2\big)^{\mu-1/2} d\mathbf{x}.
\end{gather*}
Observe that $u_\mu$ is a centrally symmetric positive-def\/inite moment functional, that is,
\begin{gather*}
\langle u_\mu, \mathbf{x}^\nu\rangle =0, \qquad \text{whenever} \quad |\nu| \quad \text{is an odd integer}.
\end{gather*}

Let us denote by $\{\mathbb{P}^{(\mu)}_n\}_{n\geqslant0}$ a ball OPS. In this case, several explicit bases are known, and we will describe one of them. An orthogonal basis in terms of classical Jacobi polynomials and spherical harmonics is presented in~\cite{DX14}.

\textit{Harmonic polynomials} (see \cite[p.~114]{DX14}) are homogeneous polynomials in $d$ variables $Y(\mathbf{x})$ satisfying the Laplace equation
\begin{gather*}
\Delta Y = 0.
\end{gather*}
Let $\mathcal{H}_n^d$ denote the space of harmonic polynomials of degree $n$. It is well known that
\begin{gather*}
\sigma_n = \dim \mathcal{H}_n^d = \binom{n+d-1}{n} - \binom{n+d-3}{n}, \qquad n\geqslant 3,
\end{gather*}
and $\sigma_0=\dim \mathcal{H}_0^d = 1$, $\sigma_1=\dim \mathcal{H}_1^d = d$.

The restriction of $Y(\mathbf{x}) \in \mathcal{H}_n^d$ to $\mathbb{S}^{d-1}$ is called a \textit{spherical harmonic}. We will use spherical polar coordinates $\mathbf{x}=r \xi$, for $\mathbf{x}\in \mathbb{R}^d$, $r\geqslant 0$, and $\xi\in \mathbb{S}^{d-1}$. If $Y \in \mathcal{H}_n^d$, we use the notation $Y(\mathbf{x})$ to denote the harmonic polynomial, and $Y(\xi)$ to denote the spherical harmonic. This notation is coherent with the fact that if $\mathbf{x} = r \xi$ then $Y(\mathbf{x}) = r^n Y(\xi)$. Moreover, spherical harmonics of dif\/ferent degree are orthogonal with respect to the surface measure on $\mathbb{S}^{d-1}$, and we can choose an orthonormal basis.

Then, an orthonormal basis for the classical ball inner product (see \cite[p.~142]{DX14}) is given by the polynomials
\begin{gather}\label{baseP}
P_{j,k}^{n}(\mathbf{x}; \mu) = h_{j,n}^{-1} P_{j}^{(\mu-\frac{1}{2}, n-2j + \frac{d-2}{2})}\big(2 \|\mathbf{x}\|^2 -1\big) Y_k^{n-2j}(\mathbf{x})
\end{gather}
for $0\leqslant j \leqslant n/2$ and $1\leqslant k\leqslant \sigma_{n-2j}$. Here $P_{j}^{(\alpha, \beta)}(t)$ denotes the $j$-th classical Jacobi polynomial, $\{Y_k^{n-2j}\colon 1\leqslant k\leqslant \sigma_{n-2j}\}$ is an orthonormal basis for $\mathcal{H}_{n-2j}^d$ and the constants $h_{j,n}$ are def\/ined by
\begin{gather*}
[h_{j,n}]^2 = [h_{j,n}(\mu)]^2=\frac{\big(\mu+\frac{1}{2}\big)_{j}\big(\frac{d}{2}\big)_{n-j} \big(n-j+\mu+\frac{d-1}{2}\big)}
{j! \big(\mu+\frac{d+1}{2}\big)_{n-j}\big(n+\mu+\frac{d-1}{2}\big)},
\end{gather*}
where $(a)_m = a (a+1)\cdots(a+m-1)$ denotes the Pochhammer symbol.

This basis can be written as an orthonormal polynomial system in the form
\begin{gather*}
\mathbb{P}_n^{(\mu)} = \big(P^n_{[\frac{n}{2}],\sigma_{n-2[\frac{n}{2}]}}, \ldots, P^n_{[\frac{n}{2}],1}; \dots; P^n_{1,\sigma_{n-2}}, \ldots, P^n_{1,2}, P^n_{1,1}; P^n_{0,\sigma_n}, \ldots, P^n_{0,2}, P^n_{0,1}\big)^t,
\end{gather*}
where we order the entries by reverse lexicographical order of their indexes. Observe that for~$n$ odd, then $n-2[\frac{n}{2}] = 1$, and $\sigma_1 = d$, while for $n$ even, $n-2[\frac{n}{2}] = 0$, and $\sigma_0 = 1$.

\subsubsection{Connection properties for adjacent families of ball polynomials}

Christof\/fel modif\/ication allows us to relate two families of ball polynomials for two values of the parameter $\mu$ dif\/fering by one unity. In fact,
\begin{gather*}
W_{\mu+1}(\mathbf{x}) = \big(1-\|\mathbf{x}\|^2\big)^{\mu+1} = \big(1-\|\mathbf{x}\|^2\big) W_\mu(\mathbf{x}).
\end{gather*}
Then, if we def\/ine $\lambda(\mathbf{x}) = 1-\|\mathbf{x}\|^2 = 1-x_1^2 - x_2^2 - \cdots - x_d^2$, using the matrix formalism \eqref{sim}, we get the following relation
\begin{gather*}
\mathbb{P}_n^{(\mu)}(\mathbf{x}) = F_n \mathbb{P}_n^{(\mu+1)}(\mathbf{x}) +
N_n \mathbb{P}_{n-2}^{(\mu+1)}(\mathbf{x}),
\end{gather*}
where $F_n$ is the non-singular matrix needed to change the leading coef\/f\/icients, and $N_n$ has full rank. However, in this case, we can explicitly give both matrices using the previously described basis.

Here we use the relation for Jacobi polynomials (formula~(22.7.18) in~\cite{AS72})
\begin{gather*}
P_n^{(\alpha,\beta)}(t) = \frac{n+\alpha+\beta+1}{2 n + \alpha + \beta+1} P_n^{(\alpha+1,\beta)}(t)
- \frac{n+\beta}{2 n + \alpha + \beta+1} P_{n-1}^{(\alpha+1,\beta)}(t),
\end{gather*}
for $n\geqslant 0$, in \eqref{baseP} and we can relate ball polynomials corresponding to adjacent families
\begin{gather*}
P_{j,k}^{n}(\mathbf{x}; \mu) = a^n_j P_{j,k}^{n}(\mathbf{x}; \mu+1) - b^n_j P_{j-1,k}^{n-2}(\mathbf{x}; \mu+1),
\end{gather*}
where
\begin{gather*}
a^n_j = \frac{h_{j,n}(\mu+1)}{h_{j,n}(\mu)}\frac{n-j+\mu+(d-1)/2}{n+\mu+(d-1)/2}, \qquad
b^n_j = \frac{h_{j-1,n-2}(\mu+1)}{h_{j,n}(\mu)}\frac{n-j-1+d/2}{n+\mu+(d-1)/2},
\end{gather*}
for $n\geqslant 2$ and $1\leqslant j\leqslant n/2$.

Then, the matrix $F_n$ is a diagonal and invertible matrix given by
\begin{gather*}
F_n = \operatorname{diag} \big(a^n_{[\frac{n}{2}]}, \ldots, a^n_{[\frac{n}{2}]}; \dots; a^n_1, \ldots, a^n_1;a^n_0, \ldots, a^n_0\big),
\end{gather*}
that is, we repeat every $a^n_i$ in the diagonal $\sigma_{n-2i}$ times, for $0\leqslant i \leqslant [\frac{n}{2}]$.

Moreover, the $r^d_n\times r^d_{n-2}$ block matrix $N_n$ has full rank, and
\begin{gather*}
N_n = - L^t_{n-1,1} L^t_{n-2,1} \operatorname{diag}\big(b^n_{[\frac{n}{2}]}, \ldots, b^n_{[\frac{n}{2}]}; \dots; b^n_1, \ldots, b^n_1\big).
\end{gather*}

\subsubsection{Ball--Uvarov polynomials}

Let $u_\mu$ be the classical ball moment functional def\/ined as above, and we add a mass point at the origin and consider the moment functional
\begin{gather*}
\langle v, p(\mathbf{x}) \rangle = \langle u_\mu, p(\mathbf{x}) \rangle + \lambda p(\mathbf{0}).
\end{gather*}
Using Theorem~\ref{main-theorem} we deduce the quasi-def\/inite character of the moment functional $v$ except for a denumerable set of negative values of $\lambda$. Of course, $v$ is positive def\/inite for every positive value of $\lambda$.

Let $\{P_{j,k}^{n}(\mathbf{x})\}$ be the mutually orthogonal basis for the ball polynomials given in \eqref{baseP}. Since spherical harmonics are homogeneous polynomials, we get $Y_k^{n-2j}(\mathbf{0}) = 0$ whenever $n-2j > 0$. Hence, it follows
\begin{gather} \label{pn(0)}
P_{j,k}^{n}(\mathbf{0}) = \begin{cases}
h^{-1}_{{\lfloor\frac{n}{2}\rfloor},n} P_{{\lfloor\frac{n}{2}\rfloor}}^{(\mu-\frac{1}{2},\frac{d-2}{2})}
(-1) &\text{if $n$ is even, $j = {\lfloor\frac{n}{2}\rfloor}$ and $k=1$,}\\
0 &\text{in any other case.}
\end{cases}
\end{gather}

As a consequence we get

\begin{Lemma} For $n \geqslant 0$
\begin{gather}
K_n(u_\mu;\mathbf{x},\mathbf{0}) =
\frac{\big(\mu + \frac{d+1}{2}\big)_{{\lfloor\frac{n}{2}\rfloor}}}{\big(\mu + \frac{1}{2}\big)_{{\lfloor\frac{n}{2}\rfloor}}}
P_{{\lfloor\frac{n}{2}\rfloor}}^{(\frac{d-2}{2},\mu-\frac{1}{2})}\big(1 - 2 \|\mathbf{x}\|^2\big), \nonumber\\
K_n(u_\mu;\mathbf{0},\mathbf{0}) =
\frac{\big(\mu + \frac{d+1}{2}\big)_{{\lfloor\frac{n}{2}\rfloor}}}{\big(\mu + \frac{1}{2}\big)_{{\lfloor\frac{n}{2}\rfloor}}}
\binom{{\lfloor\frac{n}{2}\rfloor} + \frac{d}{2}}{{\lfloor\frac{n}{2}\rfloor}}. \label{Kn0}
\end{gather}
\end{Lemma}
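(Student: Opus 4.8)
The plan is to expand $K_n(u_\mu;\mathbf x,\mathbf 0)$ from its definition \eqref{kernelK} and then collapse the resulting one-dimensional series by telescoping. Because the basis \eqref{baseP} is mutually orthonormal (so every $H_m$ is the identity), \eqref{kernelK} gives $K_n(u_\mu;\mathbf x,\mathbf 0)=\sum_{m=0}^n\sum_{j,k}P_{j,k}^m(\mathbf x)P_{j,k}^m(\mathbf 0)$. By \eqref{pn(0)} the factor $P_{j,k}^m(\mathbf 0)$ vanishes unless $m=2\ell$ is even, $j=\ell$ and $k=1$, so only the terms carrying a degree-zero spherical harmonic survive and the double sum collapses to $\sum_{\ell=0}^{\lfloor n/2\rfloor}P_{\ell,1}^{2\ell}(\mathbf x)P_{\ell,1}^{2\ell}(\mathbf 0)$. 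For these indices $n-2j=0$, so the harmonic factor in \eqref{baseP} is the constant $1$ (consistent with $h_{0,0}=1$ and $P_{0,1}^0\equiv1$), whence $P_{\ell,1}^{2\ell}(\mathbf x)=h_{\ell,2\ell}^{-1}P_\ell^{(\mu-1/2,(d-2)/2)}(2\|\mathbf x\|^2-1)$.

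Writing $\alpha=\tfrac{d-2}2$ and $\beta=\mu-\tfrac12$, the next step is to make each summand explicit. Using the reflection formula $P_\ell^{(\beta,\alpha)}(-t)=(-1)^\ell P_\ell^{(\alpha,\beta)}(t)$ and the endpoint value $P_\ell^{(\beta,\alpha)}(-1)=(-1)^\ell\binom{\ell+\alpha}{\ell}=(-1)^\ell(\tfrac d2)_\ell/\ell!$, the two signs cancel and, after simplifying $[h_{\ell,2\ell}]^2$, each term becomes $c_\ell\,P_\ell^{(\alpha,\beta)}(1-2\|\mathbf x\|^2)$ with $c_\ell=\dfrac{(\alpha+\beta+2)_\ell\,(2\ell+\alpha+\beta+1)}{(\beta+1)_\ell\,(\ell+\alpha+\beta+1)}$. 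Thus $K_n(u_\mu;\mathbf x,\mathbf 0)$ is exactly a one-variable Jacobi Christoffel--Darboux kernel evaluated at the boundary point $t=1$, and the whole problem is reduced to summing $\sum_{\ell=0}^{\lfloor n/2\rfloor}c_\ell P_\ell^{(\alpha,\beta)}(s)$ in closed form, where $s=1-2\|\mathbf x\|^2$.

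This summation is the heart of the argument and the step I expect to be the main obstacle. I would prove it by telescoping: set $T_\ell(s)=\dfrac{(\alpha+\beta+2)_\ell}{(\beta+1)_\ell}P_\ell^{(\alpha+1,\beta)}(s)$ (with $T_{-1}\equiv0$) and show that $c_\ell P_\ell^{(\alpha,\beta)}(s)=T_\ell(s)-T_{\ell-1}(s)$ for every $\ell\ge0$. This follows from the same contiguous relation (22.7.18) of \cite{AS72} already used in the previous subsection, $P_\ell^{(\alpha,\beta)}=\tfrac{\ell+\alpha+\beta+1}{2\ell+\alpha+\beta+1}P_\ell^{(\alpha+1,\beta)}-\tfrac{\ell+\beta}{2\ell+\alpha+\beta+1}P_{\ell-1}^{(\alpha+1,\beta)}$: multiplying through by $c_\ell$ collapses the coefficient of $P_\ell^{(\alpha+1,\beta)}$ to $\tfrac{(\alpha+\beta+2)_\ell}{(\beta+1)_\ell}$, while the Pochhammer identities $(\alpha+\beta+2)_\ell=(\ell+\alpha+\beta+1)(\alpha+\beta+2)_{\ell-1}$ and $(\beta+1)_\ell=(\ell+\beta)(\beta+1)_{\ell-1}$ turn the coefficient of $P_{\ell-1}^{(\alpha+1,\beta)}$ into exactly $\tfrac{(\alpha+\beta+2)_{\ell-1}}{(\beta+1)_{\ell-1}}$, i.e.\ $T_{\ell-1}$. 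The telescoping sum then leaves only $T_{\lfloor n/2\rfloor}(s)$; since $\alpha+1=\tfrac d2$, $\beta+1=\mu+\tfrac12$ and $\alpha+\beta+2=\mu+\tfrac{d+1}2$, this is the announced closed form, with first Jacobi parameter $\tfrac d2$.

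Finally, the second formula in \eqref{Kn0} is the specialization $\mathbf x=\mathbf 0$, i.e.\ $s=1$: inserting $P_{\lfloor n/2\rfloor}^{(d/2,\mu-1/2)}(1)=\binom{\lfloor n/2\rfloor+d/2}{\lfloor n/2\rfloor}$ into the first formula yields $K_n(u_\mu;\mathbf 0,\mathbf 0)$ and checks the internal consistency of the two expressions. The only genuinely delicate points are the bookkeeping of the normalizing constants $h_{\ell,2\ell}$ and the recognition that the contiguous relation (22.7.18) produces the telescoping above; everything else is routine simplification.
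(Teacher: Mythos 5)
Your argument is correct and follows essentially the same route as the paper's proof: both reduce $K_n(u_\mu;\mathbf{x},\mathbf{0})$, via the vanishing of $P_{j,k}^m(\mathbf{0})$ for all but the purely radial terms, to the $\lfloor\frac n2\rfloor$-th Christof\/fel--Darboux kernel of the univariate Jacobi polynomials $P_j^{(\mu-\frac12,\frac{d-2}2)}$ evaluated at an endpoint. The one genuine difference is the last step: the paper simply invokes the closed form (4.5.3) of Szeg\H{o} for the Jacobi kernel at $t=\pm1$, whereas you re-derive that identity from scratch by telescoping with the contiguous relation (22.7.18) of Abramowitz--Stegun. Your version is self-contained at the cost of some Pochhammer bookkeeping, and the bookkeeping checks out: your $c_\ell$ agrees with $[h_{\ell,2\ell}]^{-2}\binom{\ell+(d-2)/2}{\ell}$, and the telescoped sum $T_{\lfloor n/2\rfloor}$ is exactly Szeg\H{o}'s formula. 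One point worth making explicit: your final answer carries first Jacobi parameter $\frac d2$, while the Lemma as printed shows $\frac{d-2}{2}$ in the first display. You are right and the printed statement contains a typo; the paper's own proof also lands on $P_{\lfloor\frac n2\rfloor}^{(\frac d2,\mu-\frac12)}$, and only the parameter $\frac d2$ is consistent with the value $\binom{\lfloor\frac n2\rfloor+\frac d2}{\lfloor\frac n2\rfloor}$ obtained at $\mathbf{x}=\mathbf{0}$ and with the way the Lemma is used in the subsequent kernel relation \eqref{kernels-u-v}.
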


\begin{proof}
Let $h_n^{(\alpha,\beta)}$ be the norm of the Jacobi polynomial $P_n^{(\alpha,\beta)}(t)$ as given in \cite[(4.3.3)]{Sz78}
\begin{gather*}
h_n^{(\alpha,\beta)} = \frac{2^{\alpha+\beta+1}}{2n+\alpha+\beta+1} \frac{\Gamma(n+\alpha+1) \Gamma(n+\beta+1)}{n! \Gamma(n+\alpha+\beta+1)}.
\end{gather*}
Then we have $h^2_{{\lfloor\frac{n}{2}\rfloor},n} = d_{\mu} h_{\lfloor\frac{n}{2}\rfloor}^{(\mu-\frac{1}{2},\frac{d-2}{2})}$, where
\begin{gather*}
d_{\mu} =
\frac{1}{2^{\mu+\frac{d-1}{2}}} \frac{\Gamma\big(\mu+\frac{d+1}{2}\big)}{\Gamma\big(\mu+\frac{1}{2}\big) \Gamma\big(\frac{d}{2}\big)}.
\end{gather*}
From \eqref{pn(0)} we get
\begin{gather*}
K_n(u_\mu;\mathbf{x},\mathbf{0}) = d_{\mu}^{-1} \sum_{j=0}^{{\lfloor\frac{n}{2}\rfloor}}
\big[h_j^{(\mu-\frac{1}{2},\frac{d-2}{2})}\big]^{-1}
P_{j}^{(\mu-\frac{1}{2},\frac{d-2}{2})}\big(2 \|\mathbf{x}\|^2-1\big)
P_{j}^{(\mu-\frac{1}{2},\frac{d-2}{2})}(-1).
\end{gather*}
Here, we can recognize the ${\lfloor\frac{n}{2}\rfloor}$-th kernel of the Jacobi polynomials with parameters $(\mu-\frac{1}{2},\frac{d-2}{2})$, using $P_{j}^{(\alpha,\beta)}(t) = (-1)^j P_{j}^{(\beta, \alpha)}(-t)$ and relation (4.5.3) in \cite{Sz78}, we get
\begin{gather*}
K_n(u_\mu;\mathbf{x},\mathbf{0}) = \frac{\big(\mu + \frac{d+1}{2}\big)_{{\lfloor\frac{n}{2}\rfloor}}}{\big(\mu + \frac{1}{2}\big)_{{\lfloor\frac{n}{2}\rfloor}}}
 P_{{\lfloor\frac{n}{2}\rfloor}}^{(\frac{d}{2},\mu-\frac{1}{2})}\big(1 - 2 \|\mathbf{x}\|^2\big).
\end{gather*}
In particular, setting $\mathbf{x} = \mathbf{0}$ we obtain
\begin{gather*}
K_n(u_\mu;\mathbf{0},\mathbf{0}) =
\frac{\big(\mu + \frac{d+1}{2}\big)_{{\lfloor\frac{n}{2}\rfloor}}}{\big(\mu + \frac{1}{2}\big)_{{\lfloor\frac{n}{2}\rfloor}}}
 P_{{\lfloor\frac{n}{2}\rfloor}}^{(\frac{d}{2},\mu-\frac{1}{2})}(1) =
\frac{\big(\mu + \frac{d+1}{2}\big)_{{\lfloor\frac{n}{2}\rfloor}}}{\big(\mu + \frac{1}{2}\big)_{{\lfloor\frac{n}{2}\rfloor}}}
\binom{{\lfloor\frac{n}{2}\rfloor} + \frac{d}{2}}{{\lfloor\frac{n}{2}\rfloor}}.\tag*{\qed}
\end{gather*}
\renewcommand{\qed}{}
\end{proof}

Summarizing the previous results, in next theorem we get the connection formulas for the orthogonal polynomials and kernels corresponding to the moment functionals $u_{\mu}$ and $v$.

\begin{Theorem}
Let us assume $v$ is quasi-definite and define the polynomials
\begin{gather*}
Q_{j,k}^{n}(\mathbf{x}) = \begin{cases}
P_{{\lfloor\frac{n}{2}\rfloor},1}^{n}(\mathbf{x})- a_{n} K_{n-1}(u_\mu;\mathbf{x},\mathbf{0})
&\text{if $n$ is even, $j = {\lfloor\frac{n}{2}\rfloor}$ and $k=1$,}\\
P_{j,k}^{n}(\mathbf{x}) &\text{in any other case,}
\end{cases}
\end{gather*}
where
\begin{gather*}
a_n = \frac{\lambda P_{{\lfloor\frac{n}{2}\rfloor},1}^{n}(\mathbf{0})}{1 + \lambda K_{n-1}(u_\mu;\mathbf{0},\mathbf{0})}.
\end{gather*}
Then according to Theorem~{\rm \ref{main-theorem}}, $\{Q_{j,k}^{n}(\mathbf{x})\}$ constitutes an orthogonal basis with respect to the moment functional~$v$. Moreover, the corresponding kernels are related by the expression
\begin{gather} \label{kernels-u-v}
K_n(v;\mathbf{x},\mathbf{y}) = K_n(u_\mu;\mathbf{x},\mathbf{y}) -
b_n P_{{\lfloor\frac{n}{2}\rfloor}}^{(\frac{d}{2},\mu-\frac{1}{2})}\big(1 - 2 \|\mathbf{x}\|^2\big)
 P_{{\lfloor\frac{n}{2}\rfloor}}^{(\frac{d}{2},\mu-\frac{1}{2})}\big(1 - 2 \|\mathbf{y}\|^2\big),
\end{gather}
where
\begin{gather*}
b_n = \frac{\lambda}{1 + \lambda K_n(u_\mu;\mathbf{0},\mathbf{0})} \left[\frac{\big(\mu + \frac{d+1}{2}\big)_{{\lfloor\frac{n}{2}\rfloor}}}{\big(\mu +
\frac{1}{2}\big)_{{\lfloor\frac{n}{2}\rfloor}}}\right]^2.
\end{gather*}
\end{Theorem}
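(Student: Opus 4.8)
The plan is to read both formulas off as the $N=1$ specialization of the general Uvarov theory of Section~\ref{section3}, feeding in the explicit data computed just above. Here the modification adds a single Dirac mass, so $N=1$, $\xi_1=\mathbf{0}$ and $\Lambda=\lambda$; consequently every matrix in Theorem~\ref{main-theorem} collapses to a scalar: $\mathsf{P}_n(\xi)$ reduces to the column vector $\mathbb{P}_n^{(\mu)}(\mathbf{0})$, the matrix $\mathcal{K}_{n-1}$ to the number $K_{n-1}(u_\mu;\mathbf{0},\mathbf{0})$, and the vector $\mathsf{K}_{n-1}(\xi,\mathbf{x})$ to the scalar function $K_{n-1}(u_\mu;\mathbf{0},\mathbf{x})$. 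The invertibility of $I_N+\Lambda\mathcal{K}_{n-1}$ required by Theorem~\ref{main-theorem} is precisely the nonvanishing of $1+\lambda K_{n-1}(u_\mu;\mathbf{0},\mathbf{0})$, which holds on the quasi-definite range of $\lambda$ assumed in the hypothesis.

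First I would write out \eqref{ex-expl} in this scalar form,
\begin{gather*}
\mathbb{Q}_n(\mathbf{x}) = \mathbb{P}_n^{(\mu)}(\mathbf{x}) - \frac{\lambda}{1+\lambda K_{n-1}(u_\mu;\mathbf{0},\mathbf{0})}\, \mathbb{P}_n^{(\mu)}(\mathbf{0})\, K_{n-1}(u_\mu;\mathbf{0},\mathbf{x}),
\end{gather*}
and then invoke \eqref{pn(0)}. That computation shows the vector $\mathbb{P}_n^{(\mu)}(\mathbf{0})$ vanishes identically for $n$ odd and, for $n$ even, has a single nonzero entry $P_{\lfloor\frac{n}{2}\rfloor,1}^{n}(\mathbf{0})$ sitting in the slot $j=\lfloor\frac{n}{2}\rfloor$, $k=1$. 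Hence for odd $n$ the whole vector is unchanged, $Q_{j,k}^n=P_{j,k}^n$, while for even $n$ only the distinguished component moves, and by the symmetry $K_{n-1}(u_\mu;\mathbf{0},\mathbf{x})=K_{n-1}(u_\mu;\mathbf{x},\mathbf{0})$ it becomes $P_{\lfloor\frac{n}{2}\rfloor,1}^{n}(\mathbf{x})-a_n K_{n-1}(u_\mu;\mathbf{x},\mathbf{0})$ with $a_n$ exactly the stated scalar. This reproduces the piecewise definition of $Q_{j,k}^n$, and orthogonality with respect to $v$ is then inherited directly from Theorem~\ref{main-theorem}.

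For the kernel identity I would specialize the kernel relation \eqref{kernel} at $N=1$, giving
\begin{gather*}
K_n(v;\mathbf{x},\mathbf{y}) = K_n(u_\mu;\mathbf{x},\mathbf{y}) - \frac{\lambda}{1+\lambda K_n(u_\mu;\mathbf{0},\mathbf{0})}\, K_n(u_\mu;\mathbf{x},\mathbf{0})\, K_n(u_\mu;\mathbf{0},\mathbf{y}),
\end{gather*}
and then substitute the closed form for $K_n(u_\mu;\mathbf{x},\mathbf{0})$ established in the previous Lemma (with parameter $(\tfrac{d}{2},\mu-\tfrac{1}{2})$, as in its proof). Replacing both the $\mathbf{x}$- and $\mathbf{y}$-factors and collecting the resulting squared Pochhammer ratio into the constant $b_n$ yields \eqref{kernels-u-v} verbatim, the value of $K_n(u_\mu;\mathbf{0},\mathbf{0})$ being supplied by \eqref{Kn0}.

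I expect no serious obstacle: the analytic work was already carried out in the Lemma, and what remains is bookkeeping of the $N=1$ case. The only points demanding care are confirming, via \eqref{pn(0)}, that $\mathbb{P}_n^{(\mu)}(\mathbf{0})$ is supported on a single coordinate so that the vector correction in \eqref{ex-expl} degenerates to a scalar multiple of one entry, and keeping straight that the polynomial correction involves the $(n-1)$-th kernel whereas the kernel identity involves the $n$-th.
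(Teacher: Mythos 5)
Your proposal is correct and follows exactly the route the paper intends: the paper gives no separate proof but presents this theorem as a summary obtained by specializing Theorem~\ref{main-theorem} and the kernel identity~\eqref{kernel} to $N=1$, $\xi_1=\mathbf{0}$, $\Lambda=\lambda$, using~\eqref{pn(0)} to see that only the $(\lfloor n/2\rfloor,1)$ component of $\mathbb{P}_n^{(\mu)}(\mathbf{0})$ survives and the preceding Lemma for the closed form of $K_n(u_\mu;\mathbf{x},\mathbf{0})$. You also correctly note that the relevant Jacobi parameter is $\tfrac{d}{2}$ as derived in the Lemma's proof (and used in \eqref{kernels-u-v}), so there is nothing to add.
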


As a consequence of \eqref{kernels-u-v} we can easily deduce the asymptotic behaviour of the Christof\/fel functions. From now on we will assume $\lambda>0$, also we have to impose the restriction \mbox{$\mu \geqslant 0$} because existing asymptotics for Christof\/fel functions in the classical case have only been established for this range of~$\mu$. The symbol $C$ will represent a constant but it does not have always the same value.

First, we get the asymptotics for the interior of the ball. For this purpose, we will need the following classical estimate for Jacobi polynomials
(see \cite[equations~(4.1.3) and (7.32.5)]{Sz78}):

\begin{Lemma} \label{szego-est}
For arbitrary real numbers $\alpha > -1$, $\beta >-1$ and $t \in [0, 1]$,
\begin{gather*}
\big| P_n^{(\alpha, \beta)}(t) \big| \leqslant C n^{-\frac{1}{2}}\big(1 - t + n^{-2}\big)^{-(\alpha+1/2)/2}.
\end{gather*}
And an analogous estimate on $[-1, 0]$ follows from $P_n^{(\alpha, \beta)}(t) = (-1)^n P_n^{(\beta, \alpha)}(-t)$.
\end{Lemma}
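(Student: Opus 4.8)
The plan is to reduce the stated bound to the two classical regime estimates for Jacobi polynomials and to patch them together by exploiting the regularizing role of the $n^{-2}$ term inside the parenthesis. First I would substitute $t=\cos\theta$ with $\theta\in[0,\pi/2]$, which is exactly the range corresponding to $t\in[0,1]$, and record the elementary two-sided comparison
\begin{gather*}
\tfrac{2}{\pi^2}\,\theta^2 \leqslant 1-\cos\theta = 2\sin^2\tfrac{\theta}{2} \leqslant \tfrac12\,\theta^2,\qquad \theta\in[0,\tfrac{\pi}{2}],
\end{gather*}
so that $1-t$ is comparable to $\theta^2$ throughout. Writing $\gamma=\alpha+\tfrac12>-\tfrac12$, the target becomes $|P_n^{(\alpha,\beta)}(\cos\theta)|\leqslant C n^{-1/2}(1-\cos\theta+n^{-2})^{-\gamma/2}$.

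Then I would split $[0,\pi/2]$ at $\theta=c/n$, with $c>0$ the fixed constant appearing in Szeg\H{o}'s estimates. In the \emph{oscillatory regime} $c/n\leqslant\theta\leqslant\pi/2$, equation~(7.32.5) of~\cite{Sz78} gives $|P_n^{(\alpha,\beta)}(\cos\theta)|\leqslant C n^{-1/2}\theta^{-\gamma}$. Here $\theta\geqslant c/n$ forces $n^{-2}\leqslant c^{-2}\theta^2$, so together with the comparison above there are constants $0<c_1<c_2$ with $c_1\theta^2\leqslant 1-\cos\theta+n^{-2}\leqslant c_2\theta^2$; raising this to the power $-\gamma/2$ (distinguishing $\gamma\geqslant0$ from $\gamma<0$ only to track which constant is used) yields $\theta^{-\gamma}\leqslant C(1-\cos\theta+n^{-2})^{-\gamma/2}$, which is exactly the desired inequality. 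In the \emph{boundary regime} $0\leqslant\theta\leqslant c/n$, the near-endpoint estimate~(4.1.3) of~\cite{Sz78} gives $|P_n^{(\alpha,\beta)}(\cos\theta)|\leqslant C n^{\alpha}$; on this range $n^{-2}\leqslant 1-\cos\theta+n^{-2}\leqslant(\tfrac{c^2}{2}+1)n^{-2}$, hence $(1-\cos\theta+n^{-2})^{-\gamma/2}$ is comparable to $(n^{-2})^{-\gamma/2}=n^{\gamma}=n^{\alpha+1/2}$, so that $C n^{\alpha}=C n^{-1/2}n^{\alpha+1/2}\leqslant C' n^{-1/2}(1-\cos\theta+n^{-2})^{-\gamma/2}$, again the desired inequality. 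Combining the two regimes and undoing the substitution $1-\cos\theta=1-t$ proves the estimate on $[0,1]$; the claim on $[-1,0]$ then follows from the symmetry $P_n^{(\alpha,\beta)}(t)=(-1)^n P_n^{(\beta,\alpha)}(-t)$ already recorded in the statement.

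The step I expect to demand the most care is the matching at the transition $\theta\approx c/n$: the sole purpose of the additive $n^{-2}$ inside the parenthesis is to interpolate between the interior decay rate $\theta^{-\gamma}$ and the endpoint growth rate $n^{\gamma}$, so I must verify that the two Szeg\H{o} estimates glue into a single inequality with a constant independent of both $n$ and $\theta$, and in particular that the argument is uniform in the sign of $\gamma=\alpha+\tfrac12$ --- the case $-1<\alpha<-\tfrac12$, where $\gamma<0$ and the comparison of powers reverses, being the delicate one. Once the comparabilities $1-\cos\theta+n^{-2}\sim\theta^2$ in the interior and $1-\cos\theta+n^{-2}\sim n^{-2}$ at the boundary are in place, the remainder is routine bookkeeping with constants.
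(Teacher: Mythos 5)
Your argument is correct, and it is precisely the standard derivation from the two Szeg\H{o} estimates that the paper itself merely cites without proof: the lemma is stated there as a known classical bound with a pointer to equations (4.1.3) and (7.32.5) of Szeg\H{o}, and your splitting at $\theta\asymp c/n$, the comparabilities $1-\cos\theta+n^{-2}\asymp\theta^2$ (interior) and $\asymp n^{-2}$ (boundary), and the care with the sign of $\gamma=\alpha+\tfrac{1}{2}$ are exactly what is needed to glue them into the single displayed inequality. The only quibble is attribution: the bound $O(n^{\alpha})$ on $0\leqslant\theta\leqslant c/n$ is the second branch of (7.32.5) rather than a consequence of (4.1.3), which only gives the value $P_n^{(\alpha,\beta)}(1)=\binom{n+\alpha}{n}\sim n^{\alpha}/\Gamma(\alpha+1)$ at the endpoint itself.
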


\begin{Theorem}
Let $r=\|\mathbf{x}\|$. For $0 < r \leqslant 1/2$ we have
\begin{gather} \label{left_as}
0 < K_n(u_\mu;\mathbf{x},\mathbf{x}) - K_n(v;\mathbf{x},\mathbf{x}) \leqslant C n^{-1} \left( 2r^{2}+\frac{4}{n^{2}}\right)^{-\frac{d+1}{2}}.
\end{gather}
For $1/2 \leqslant r < 1$ we have
\begin{gather} \label{right-as}
0 < K_n(u_\mu;\mathbf{x},\mathbf{x}) - K_n(v;\mathbf{x},\mathbf{x}) \leqslant C n^{-1} \left( 2\big(1-r^{2}\big)+\frac{4}{n^{2}}\right)^{-\mu}.
\end{gather}
Here $C$ is independent of $n$ and $x$. Consequently if $\mu \geqslant 0$, uniformly for $\mathbf{x}$ in compact subsets of $\{\mathbf{x}\colon 0<\|\mathbf{x}\| <1\} $,
\begin{gather}\label{asymp1}
\lim_{n\rightarrow \infty }K_n(v;\mathbf{x},\mathbf{x}) / \binom{n+d}{d} =\frac{1}{\sqrt{\pi }} \frac{\Gamma \big( \mu +\frac{1}{2}\big) \Gamma \big( \frac{d+1}{2}\big) }{\Gamma \big( \mu +\frac{d+1}{2}\big) } \big(1-\|\mathbf{x}\|^{2}\big)^{-\mu }.
\end{gather}
\end{Theorem}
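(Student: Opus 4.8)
The plan is to evaluate the kernel connection formula \eqref{kernels-u-v} on the diagonal $\mathbf{y}=\mathbf{x}$. Writing $r=\|\mathbf{x}\|$ and $m=\lfloor n/2\rfloor$, this reduces the whole statement to the single identity
\[
K_n(u_\mu;\mathbf{x},\mathbf{x}) - K_n(v;\mathbf{x},\mathbf{x}) = b_n \big[P_{m}^{(\frac{d}{2},\mu-\frac{1}{2})}\big(1-2r^2\big)\big]^2 .
\]
Since $\lambda>0$ and $K_n(u_\mu;\mathbf{0},\mathbf{0})>0$ by \eqref{Kn0}, the coefficient $b_n$ is strictly positive, so the right-hand side is a nonnegative multiple of a square; this furnishes the lower bounds in \eqref{left_as} and \eqref{right-as}. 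It then remains to bound the product $b_n\,[P_m^{(\frac{d}{2},\mu-\frac{1}{2})}(1-2r^2)]^2$ from above, and, for the limit, to control its size after normalization by $\binom{n+d}{d}$.

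First I would show that $b_n$ is bounded uniformly in $n$. Substituting the value of $K_n(u_\mu;\mathbf{0},\mathbf{0})$ from \eqref{Kn0} into the definition of $b_n$ and using $1+\lambda K_n(u_\mu;\mathbf{0},\mathbf{0})\geqslant \lambda K_n(u_\mu;\mathbf{0},\mathbf{0})$ gives
\[
0 < b_n \leqslant \frac{(\mu+\frac{d+1}{2})_m}{(\mu+\frac{1}{2})_m}\Big/\binom{m+\frac{d}{2}}{m}.
\]
By the standard Pochhammer/Gamma asymptotics $(a)_m/(b)_m\sim \Gamma(b)\Gamma(a)^{-1}m^{a-b}$, both the ratio $\frac{(\mu+(d+1)/2)_m}{(\mu+1/2)_m}$ and $\binom{m+d/2}{m}$ grow like a constant times $m^{d/2}$, so the right-hand side converges to $\Gamma(\mu+\frac{1}{2})\Gamma(\frac{d}{2}+1)/\Gamma(\mu+\frac{d+1}{2})$; in particular $b_n\leqslant C$ with $C$ independent of $n$ and $\mathbf{x}$.

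Next I would estimate the Jacobi factor by means of Lemma~\ref{szego-est}, treating the two ranges of $r$ separately according to the sign of the argument $t=1-2r^2$. For $0<r\leqslant\frac{1}{2}$ we have $t\in[\frac{1}{2},1)\subset[0,1]$ and $1-t=2r^2$, so the lemma with $\alpha=\frac{d}{2}$, $\beta=\mu-\frac{1}{2}$ yields
\[
\big[P_m^{(\frac{d}{2},\mu-\frac{1}{2})}(1-2r^2)\big]^2 \leqslant C\, m^{-1}\big(2r^2+m^{-2}\big)^{-\frac{d+1}{2}} .
\]
For $\frac{1}{2}\leqslant r<1$ we have $t\in(-1,-\frac{1}{2}]\subset[-1,0]$; applying the reflection $P_m^{(\alpha,\beta)}(t)=(-1)^mP_m^{(\beta,\alpha)}(-t)$ and then the lemma with the parameters exchanged (argument $-t=2r^2-1\in[0,1)$, so $1-(-t)=2(1-r^2)$ and exponent $\mu$) gives
\[
\big[P_m^{(\frac{d}{2},\mu-\frac{1}{2})}(1-2r^2)\big]^2 \leqslant C\, m^{-1}\big(2(1-r^2)+m^{-2}\big)^{-\mu} .
\]
Since $m=\lfloor n/2\rfloor$ satisfies $m^{-1}\asymp n^{-1}$ and $m^{-2}\asymp n^{-2}$, replacing $m$ by $n$ only changes the constant, and combining these estimates with the uniform bound on $b_n$ produces \eqref{left_as} and \eqref{right-as}.

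Finally, for \eqref{asymp1} I would divide the diagonal identity by $\binom{n+d}{d}=\mathbf{r}_n^d$. On a compact subset of $\{0<\|\mathbf{x}\|<1\}$ the radius $r$ stays bounded away from $0$ and $1$, so the right-hand sides of \eqref{left_as} and \eqref{right-as} are $O(n^{-1})$ uniformly; dividing by $\binom{n+d}{d}=O(n^d)$ shows that $[K_n(u_\mu;\mathbf{x},\mathbf{x})-K_n(v;\mathbf{x},\mathbf{x})]/\binom{n+d}{d}\to0$ uniformly there. Hence $K_n(v;\mathbf{x},\mathbf{x})/\binom{n+d}{d}$ has the same limit as $K_n(u_\mu;\mathbf{x},\mathbf{x})/\binom{n+d}{d}$, and \eqref{asymp1} follows from the known classical Christoffel-function asymptotics for $K_n(u_\mu;\mathbf{x},\mathbf{x})$ on the ball. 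The main obstacle is precisely this last input: the limit for $v$ is only as good as the corresponding limit for $u_\mu$, which forces the hypothesis $\mu\geqslant0$ (the classical asymptotics being available only in that range), whereas the two-sided upper bounds hold for all admissible $\mu$. The remaining work is bookkeeping: matching the exponents $(d+1)/2$ and $\mu$ coming out of Lemma~\ref{szego-est} to the two regimes and tracking the index shift $m=\lfloor n/2\rfloor$.
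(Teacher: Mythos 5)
Your proposal is correct and follows essentially the same route as the paper: restrict \eqref{kernels-u-v} to the diagonal, bound $b_n$ uniformly (the paper invokes Stirling's formula where you use Pochhammer asymptotics, which amounts to the same computation), apply Lemma~\ref{szego-est} with the reflection formula in the two ranges of $r$, and obtain \eqref{asymp1} from Xu's Christoffel-function asymptotics on the ball. You supply more of the bookkeeping (the explicit bound on $b_n$ and the index shift $m=\lfloor n/2\rfloor$) than the paper does, but the argument is the same.
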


\begin{proof}
From \eqref{kernels-u-v} we get
\begin{gather*}
0 < K_n(u_\mu;\mathbf{x},\mathbf{x}) - K_n(v;\mathbf{x},\mathbf{x}) =
b_n \left(P_{{\lfloor\frac{n}{2}\rfloor}}^{(\frac{d}{2},\mu-\frac{1}{2})}\big(1 - 2 \|\mathbf{x}\|^2\big)\right)^2
\end{gather*}
with
\begin{gather*}
0 < b_n = \frac{\lambda}{1 + \lambda \frac{\big(\mu + \frac{d+1}{2}\big)_{{\lfloor\frac{n}{2}\rfloor}}}{\big(\mu +
\frac{1}{2}\big)_{{\lfloor\frac{n}{2}\rfloor}}}
\left(\begin{matrix}\lfloor\frac{n}{2}\rfloor + \frac{d}{2}\\ \lfloor\frac{n}{2}\rfloor\end{matrix}\right)}
\left[\frac{\big(\mu + \frac{d+1}{2}\big)_{{\lfloor\frac{n}{2}\rfloor}}}{\big(\mu + \frac{1}{2}\big)_{{\lfloor
\frac{n}{2}\rfloor}}}\right]^2.
\end{gather*}
Using Stirling's formula we can easily deduce the convergence of the sequence $\{b_n\}$ to a positive value. Therefore we can f\/ind a constant $C$ such that $0 < b_n < C$, $n = 0, 1, 2, \dots$, that is
\begin{gather*}
0 < K_n(u_\mu;\mathbf{x},\mathbf{x}) - K_n(v;\mathbf{x},\mathbf{x}) < C
\left(P_{{\lfloor\frac{n}{2}\rfloor}}^{(\frac{d}{2},\mu-\frac{1}{2})}\big(1 - 2 \|\mathbf{x}\|^2\big)\right)^2,
\end{gather*}
and Lemma~\ref{szego-est} gives \eqref{left_as} and \eqref{right-as}. Finally, \eqref{asymp1} follows from the corresponding asymptotic result for Christof\/fel functions on the ball obtained by Y.~Xu in~\cite{X96}.
\end{proof}

At the origin the asymptotic is completely dif\/ferent, in fact, we recover the value of the mass~$\lambda$ as the limit of the Christof\/fel functions evaluated at $\mathbf{x} = \mathbf{0}$, as we show in our next result.

\begin{Theorem}
\begin{gather*}
\lim_{n\rightarrow \infty }K_n(v;\mathbf{0},\mathbf{0}) = \frac{1}{\lambda}.
\end{gather*}
\end{Theorem}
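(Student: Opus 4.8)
The plan is to compute the limit directly from the explicit kernel identity \eqref{kernels-u-v} specialized to $\mathbf{x}=\mathbf{y}=\mathbf{0}$, rather than to appeal to any asymptotic estimate. Setting $\mathbf{x}=\mathbf{y}=\mathbf{0}$ in \eqref{kernels-u-v} and using that $P_{\lfloor n/2\rfloor}^{(d/2,\mu-1/2)}(1)=\binom{\lfloor n/2\rfloor + d/2}{\lfloor n/2\rfloor}$ together with the second formula in \eqref{Kn0}, I would obtain a completely explicit scalar expression for $K_n(v;\mathbf{0},\mathbf{0})$ in terms of $\lambda$, the ratio of Pochhammer symbols $R_n := \big(\mu+\frac{d+1}{2}\big)_{\lfloor n/2\rfloor}/\big(\mu+\frac{1}{2}\big)_{\lfloor n/2\rfloor}$, and the binomial $\binom{\lfloor n/2\rfloor+d/2}{\lfloor n/2\rfloor}$.

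The key algebraic step is that both $K_n(v;\mathbf{0},\mathbf{0})$ and the subtracted correction term are built from the \emph{same} quantity $K_n(u_\mu;\mathbf{0},\mathbf{0}) = R_n\binom{\lfloor n/2\rfloor+d/2}{\lfloor n/2\rfloor}$. Writing $\kappa_n := K_n(u_\mu;\mathbf{0},\mathbf{0})$, the identity \eqref{kernels-u-v} at the origin collapses to
\begin{gather*}
K_n(v;\mathbf{0},\mathbf{0}) = \kappa_n - \frac{\lambda}{1+\lambda\kappa_n} \kappa_n^2 = \frac{\kappa_n}{1+\lambda\kappa_n},
\end{gather*}
where the coefficient $b_n$ supplies exactly the factor $R_n^2$ needed to turn the product of the two Jacobi values into $\kappa_n^2$. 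This is the natural collapse one expects from a rank-one Uvarov perturbation, and it mirrors the univariate Christoffel-function behaviour under addition of a point mass.

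With this closed form in hand the limit is immediate: I would show that $\kappa_n\to+\infty$ as $n\to\infty$, since $R_n$ grows (its numerator Pochhammer has the larger argument) and the binomial factor grows polynomially, so the denominator of $\kappa_n/(1+\lambda\kappa_n)$ is dominated by $\lambda\kappa_n$ and
\begin{gather*}
\lim_{n\to\infty} K_n(v;\mathbf{0},\mathbf{0}) = \lim_{n\to\infty}\frac{\kappa_n}{1+\lambda\kappa_n} = \frac{1}{\lambda}.
\end{gather*}
The only point requiring care — and the main obstacle, though a mild one — is verifying that $\kappa_n\to\infty$ rather than converging to a finite value; this follows from Stirling's formula applied to the Pochhammer ratio $R_n$ exactly as in the previous theorem's proof, where the convergence of $\{b_n\}$ was already established by the same asymptotic analysis. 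Once divergence of $\kappa_n$ is secured, the result drops out without any further estimation.
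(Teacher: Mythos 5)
Your proposal is correct and follows essentially the same route as the paper: the paper also collapses the Uvarov kernel identity at the origin to $K_n(v;\mathbf{0},\mathbf{0})=\kappa_n/(1+\lambda\kappa_n)$ with $\kappa_n=K_n(u_\mu;\mathbf{0},\mathbf{0})$, and then lets $\kappa_n\to\infty$ via \eqref{Kn0} and Stirling's formula. The only cosmetic difference is that you start from the ball-specific formula \eqref{kernels-u-v} while the paper invokes the general relation \eqref{kernel}; the algebra is identical.
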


\begin{proof}
From \eqref{kernel} we get
\begin{align*}
K_n(v;\mathbf{0},\mathbf{0}) &=
K_n(u_{\mu};\mathbf{0},\mathbf{0}) - \frac{\lambda \left[K_n(u_{\mu};\mathbf{0},\mathbf{0})\right]^2}
{1 + \lambda K_n(u_{\mu};\mathbf{0},\mathbf{0})}
= \frac{K_n(u_{\mu};\mathbf{0},\mathbf{0})}
{1 + \lambda K_n(u_{\mu};\mathbf{0},\mathbf{0})},
\end{align*}
and the result follows from \eqref{Kn0} and Stirling's formula.
\end{proof}

\subsection[Uvarov modif\/ication of bivariate Bessel--Laguerre orthogonal polynomials]{Uvarov modif\/ication of bivariate Bessel--Laguerre orthogonal\\ polynomials}

As usual, let $\{L_n^{(\alpha)}(t)\}_{n\geqslant 0}$ denote the classical Laguerre polynomials orthogonal with respect to the moment functional
\begin{gather*}
\big\langle \ell^{(\alpha)}, f \big\rangle = \int_{0}^{+\infty} f(t) t^\alpha e^ {-t} dt, \qquad \alpha >-1,
\end{gather*}
normalized by the condition (equation~(22.2.12) in \cite{AS72})
\begin{gather*}
h_n^{(\alpha)} = \big\langle \ell^{(\alpha)},(L_n^{(\alpha)}(t))^2\big\rangle = \frac{\Gamma(\alpha+n+1)}{n!}.
\end{gather*}
Following \cite{KF49}, let $\{B_n^{(a,b)}(z)\}_{n\geqslant0}$ denote the univariate classical Bessel polynomials orthogonal with respect to the non positive-def\/inite moment functional
\begin{gather*}
\langle b^{(a,b)}, f \rangle = \int_{c} f(z) (2\pi i)^{-1} z^{a-2} e^{-b/z} dz, \qquad a\neq 0, -1,-2, \ldots, \qquad b\ne 0,
\end{gather*}
where $c$ is the unit circle oriented in the counter-clockwise direction, normalized by the condition
\begin{gather*}
B_n^{(a,b)}(0)=1,
\end{gather*}
and \cite[equation~(58), p.~113]{KF49}
\begin{gather*}
h_{n}^{(a,b)}=\big\langle b^{(a,b)}, (B_{n}^{(a,b)}(z))^2\big\rangle =\frac{(-1)^{n+1} n! b}{(2n+a-1) (a)_{n-1}}.
\end{gather*}

In \cite{KLL01}, the Krall and Shef\/fer's partial dif\/ferential equation (5.55)
\begin{gather*}
x^2 w_{xx} + 2 x y w_{xy} + \big(y^2-y\big) w_{yy} +g(x-1) w_x + g(y-\gamma)w_y = n (n+g-1)w,
\end{gather*}
was considered and the authors proved that it has an OPS $\big\{P^{(g, \gamma)}_{n,m}(x,y)\colon 0\leqslant m\leqslant n\big\}_{n\geqslant0}$ for $g \gamma+n\neq 0$, for $n\geqslant 0$. The corresponding moment functional is not positive-def\/inite.

Moreover, they proved that an explicit expression for the polynomial solutions is given by
\begin{gather}\label{BL}
P^{(g, \gamma)}_{n,m}(x,y) = B_{n-m}^{(g+2m, -g)}(x) x^m L_{m}^{(g\gamma-1)}\left(\frac{g y}{x}\right),
\end{gather}
for $g\neq 0$, $g\gamma >-2$, $g+n\neq 0$ and $g \gamma+n\neq 0$, when $n\geqslant 0$. These polynomials are orthogonal with respect to
the moment functional $u^{(g,\gamma)}$ def\/ined as (see Theorem 3.4 in \cite{KLL01})
\begin{gather*}
\big\langle u^{(g,\gamma)}, x^h y^k\big\rangle = \big\langle x^{k} \ell_x^{(g,-g)},x^h\big\rangle \big\langle b_y^{(g\gamma-1)}, y^k\big\rangle, \qquad
h,k\geqslant 0.
\end{gather*}

For the bivariate polynomials \eqref{BL}, we get
\begin{gather*}
h^{(g,\gamma)}_{n,m} = \big\langle u^{(g,\gamma)},\big(P^{(g,\gamma)}_{n,m}(x,y)\big)^2\big\rangle\\
\hphantom{h^{(g,\gamma)}_{n,m}}{} =
\big\langle b_x^{(g+2m,-g)}, \big(B_{n-m}^{(g+2m, -g)}(x)\big)^2\big\rangle
\big\langle \ell^{(g\gamma-1)}, \big(L_{m}^{(g\gamma-1)}(y)\big)^2\big\rangle\\
\hphantom{h^{(g,\gamma)}_{n,m}}{}= h_{n-m}^{(g+2m, -g)} h_m^{(g\gamma-1)}.
\end{gather*}

Now, we will modify the above moment functional adding a mass at the point $\mathbf{0}=(0,0)$. Thus, we def\/ine
\begin{gather*}
\big\langle v^{(g,\gamma)}, p(x,y)\big\rangle = \big\langle u^{(g,\gamma)},p(x,y)\big\rangle + \lambda p(0,0), \qquad \forall\, p\in\Pi^2.
\end{gather*}
Following Section \ref{section3}, since the polynomial $x^m L_{m}^{(g\gamma-1)}\left(\frac{g y}{x}\right)$ vanishes at $(0,0)$ except for $m=0$, we get
\begin{gather*}
P^{(g, \gamma)}_{n,0}(0,0) = B_{n}^{(g, -g)}(0) = 1,\qquad
P^{(g, \gamma)}_{n,m}(0,0) = 0,\qquad 0<m\leqslant n.
\end{gather*}
Therefore, \eqref{sP} becomes
\begin{gather*}
\mathsf{P}_n((0,0))= (1, 0, \dots, 0)^t,
\end{gather*}
and \eqref{sK} can be explicitly computed as follows
\begin{gather*}
\mathsf{K}_{n}((0,0),(x,y)) = K_{n}(u^{(g,\gamma)};(0,0),(x,y))\\
\hphantom{\mathsf{K}_{n}((0,0),(x,y))}{} = \sum_{m=0}^n\sum_{k=0}^m
\frac{P^{(g, \gamma)}_{m,k}(0,0) P^{(g, \gamma)}_{m,k}(x,y)}{h^{(g, \gamma)}_{m,k}} =
\sum_{m=0}^n \frac{B_{m}^{(g, -g)}(0)B_{m}^{(g, -g)}(x)}{h^{(g,-g)}_{m} h_0^{(g\gamma-1)}}\\
\hphantom{\mathsf{K}_{n}((0,0),(x,y))}{} = \frac{1}{h_0^{(g\gamma-1)}} K_n(b^{(g,-g)}; 0,x)
= \frac{1}{g \Gamma(g\gamma)}\sum_{m=0}^n \frac{(-1)^m(2m+g-1)(g)_{m-1}}{m!}\\
\hphantom{\mathsf{K}_{n}((0,0),(x,y))}{} = \frac{1}{g \Gamma(g\gamma)}\frac{(-1)^n (g)_n}{n!} =
\frac{(-1)^n}{g \Gamma(g\gamma)}\binom{g+n-1}{n}.
\end{gather*}
Then, using Theorem \ref{main-theorem}, $v^{(g,\gamma)}$ is quasi-def\/inite if and only if we choose $\lambda\in\mathbb{R}$ such that the value
\begin{gather*}
\lambda_n = 1 + \lambda \mathsf{K}_{n}((0,0),(x,y))= 1 + \lambda \frac{(-1)^n}{g \Gamma(g\gamma)}\binom{g+n-1}{n}
\end{gather*}
is dif\/ferent from zero for $n\geqslant 0$. Using the above computations, there exists only a numerable set of values of $\lambda\in \mathbb{R}$ such that the modif\/ied moment functional $v^{(g,\gamma)}$ is not quasi-def\/inite.

Therefore, let $\lambda \in\mathbb{R}$ such that $\lambda_n\neq 0$, $n\geqslant 0$ and consider the quasi-def\/inite moment functional $v^{(g,\gamma)}$. The modif\/ied polynomials are given by
\begin{gather*}
Q^{(g, \gamma)}_{n,m}(x,y) = P^{(g, \gamma)}_{n,m}(x,y), \qquad 0 < m \leqslant n,\\
Q^{(g, \gamma)}_{n,0}(x,y) = P^{(g, \gamma)}_{n,0}(x,y) - \frac{\lambda}{\lambda_{n-1}}\mathsf{K}_{n-1}((0,0),(x,y)).
\end{gather*}
We can compute explicitly $Q^{(g, \gamma)}_{n,0}(x,y)$. If $\tilde{\lambda} = \lambda/h_0^{(g\gamma-1)}$, then
\begin{gather*}
Q^{(g, \gamma)}_{n,0}(x,y) = B_n^{(g, -g)}(x) - \frac{\tilde{\lambda}}{1+\tilde{\lambda} K_n(b^{(g,-g)}; 0,0)} K_{n-1}\big(b^{(g,-g)}; 0,x\big),
\end{gather*}
that is, $Q^{(g, \gamma)}_{n,0}(x,y)$ coincides with the (univariate) orthogonal polynomial associated with the univariate modif\/ication of the Bessel moment functional given by
\begin{gather*}
\big\langle \tilde{b}^{(g,-g)}, p(x)\big\rangle = \big\langle b^{(g,-g)}, p(x)\big\rangle + \tilde{\lambda} p(0).
\end{gather*}
As a conclusion, the perturbation $v^{(g,\gamma)}$ of the bivariate moment functional $u^{(g,\gamma)}$ only af\/fects the bivariate polynomials $Q^{(g, \gamma)}_{n,0}(x,y)$, $n\geqslant 0$.

\subsection*{Acknowledgements}

The authors are really grateful to the anonymous referees for their valuable suggestions and comments which led us to improve this paper. This work has been partially supported by MINECO of Spain and the European Regional Development Fund (ERDF) through grant MTM2014--53171--P, and Jun\-ta de Andaluc\'{\i}a grant P11--FQM--7276 and research group FQM--384.

\pdfbookmark[1]{References}{ref}
\LastPageEnding


\begin{thebibliography}{99}
\footnotesize\itemsep=0pt

\bibitem{AS72}
Abramowitz M., Stegun I.A., Handbook of mathematical functions, with formulas,
 graphs, and mathematical tables, Dover Publications, New York, 1972.

\bibitem{APPR14}
Alfaro M., Pe{\~n}a A., P{\'e}rez T.E., Rezola M.L., On linearly related
 orthogonal polynomials in several variables, \href{http://dx.doi.org/10.1007/s11075-013-9747-2}{\textit{Numer. Algorithms}}
 \textbf{66} (2014), 525--553, \href{http://arxiv.org/abs/1307.5999}{arXiv:1307.5999}.

\bibitem{AM16}
Ariznabarreta G., Ma\~nas M., Multivariate orthogonal polynomial and integrable
 systems, \href{http://dx.doi.org/10.1016/j.aim.2016.06.029}{\textit{Adv. Math.}} \textbf{302} (2016), 628--739,
 \href{http://arxiv.org/abs/1409.0570}{arXiv:1409.0570}.

\bibitem{AM2}
Ariznabarreta G., Ma\~nas M., Darboux transformations for multivariate
 orthogonal polynomials, \href{http://arxiv.org/abs/1503.04786}{arXiv:1503.04786}.

\bibitem{AM3}
Ariznabarreta G., Ma\~nas M., Linear spectral transformations for multivariate
 orthogonal polynomials and multispectral Toda hierarchies,
 \href{http://arxiv.org/abs/1511.09129}{arXiv:1511.09129}.

\bibitem{BSX95}
Berens H., Schmid H.J., Xu Y., Multivariate {G}aussian cubature formulae,
 \href{http://dx.doi.org/10.1007/BF01193547}{\textit{Arch. Math. (Basel)}} \textbf{64} (1995), 26--32.

\bibitem{BM04}
Bueno M.I., Marcell{\'a}n F., Darboux transformation and perturbation of linear
 functionals, \href{http://dx.doi.org/10.1016/j.laa.2004.02.004}{\textit{Linear Algebra Appl.}} \textbf{384} (2004), 215--242.

\bibitem{Ch78}
Chihara T.S., An introduction to orthogonal polynomials, \textit{Mathematics
 and its Applications}, Vol.~13, Gordon and Breach Science Publishers, New
 York~-- London~-- Paris, 1978.

\bibitem{Ch1858}
Christof\/fel E.B., \"{U}ber die {G}au\ss ische {Q}uadratur und eine
 {V}erallgemeinerung derselben, \href{http://dx.doi.org/10.1515/crll.1858.55.61}{\textit{J.~Reine Angew. Math.}} \textbf{55}
 (1858), 61--82.

\bibitem{DFPP12}
Delgado A.M., Fern{\'a}ndez L., P{\'e}rez T.E., Pi{\~n}ar M.A., On the {U}varov
 modif\/ication of two variable orthogonal polynomials on the disk,
 \href{http://dx.doi.org/10.1007/s11785-011-0192-8}{\textit{Complex Anal. Oper. Theory}} \textbf{6} (2012), 665--676.

\bibitem{DFPPX10}
Delgado A.M., Fern{\'a}ndez L., P{\'e}rez T.E., Pi{\~n}ar M.A., Xu Y.,
 Orthogonal polynomials in several variables for measures with mass points,
 \href{http://dx.doi.org/10.1007/s11075-010-9391-z}{\textit{Numer. Algorithms}} \textbf{55} (2010), 245--264, \href{http://arxiv.org/abs/0911.2818}{arXiv:0911.2818}.

\bibitem{DX14}
Dunkl C.F., Xu Y., Orthogonal polynomials of several variables,
 \href{http://dx.doi.org/10.1017/CBO9781107786134}{\textit{Encyclopedia of Mathematics and its Applications}}, Vol.~155, 2nd ed.,
 Cambridge University Press, Cambridge, 2014.

\bibitem{FPPX10}
Fern{\'a}ndez L., P{\'e}rez T.E., Pi{\~n}ar M.A., Xu Y., Krall-type orthogonal
 polynomials in several variables, \href{http://dx.doi.org/10.1016/j.cam.2009.02.067}{\textit{J.~Comput. Appl. Math.}}
 \textbf{233} (2010), 1519--1524, \href{http://arxiv.org/abs/0801.0065}{arXiv:0801.0065}.

\bibitem{GvL}
Golub G.H., Van~Loan C.F., Matrix computations, 3rd ed., \textit{Johns Hopkins Studies
 in the Mathematical Sciences}, Johns Hopkins University Press, Baltimore, MD,
 1996.

\bibitem{HJ85}
Horn R.A., Johnson C.R., Matrix analysis, \href{http://dx.doi.org/10.1017/CBO9780511810817}{Cambridge University Press},
 Cambridge, 1985.

\bibitem{Kr80}
Krall A.M., Orthogonal polynomials satisfying fourth order dif\/ferential
 equations, \href{http://dx.doi.org/10.1017/S0308210500015213}{\textit{Proc. Roy. Soc. Edinburgh Sect.~A}} \textbf{87} (1981),
 271--288.

\bibitem{KF49}
Krall H.L., Frink O., A new class of orthogonal polynomials: {T}he {B}essel
 polynomials, \href{http://dx.doi.org/10.1090/S0002-9947-1949-0028473-1}{\textit{Trans. Amer. Math. Soc.}} \textbf{65} (1949), 100--115.

\bibitem{KS67}
Krall H.L., Shef\/fer I.M., Orthogonal polynomials in two variables, \href{http://dx.doi.org/10.1007/BF02412238}{\textit{Ann.
 Mat. Pura Appl.~(4)}} \textbf{76} (1967), 325--376.

\bibitem{KLL01}
Kwon K.H., Lee J.K., Littlejohn L.L., Orthogonal polynomial eigenfunctions of
 second-order partial dif\/ferential equations, \href{http://dx.doi.org/10.1090/S0002-9947-01-02784-2}{\textit{Trans. Amer. Math. Soc.}}
 \textbf{353} (2001), 3629--3647.

\bibitem{Ma85}
Maroni P., Sur quelques espaces de distributions qui sont des formes
 lin\'eaires sur l'espace vectoriel des polyn\^omes, in Orthogonal Polynomials
 and Applications ({B}ar-le-{D}uc, 1984), \href{http://dx.doi.org/10.1007/BFb0076543}{\textit{Lecture Notes in Math.}},
 Vol.~1171, Springer, Berlin, 1985, 184--194.

\bibitem{Ma88}
Maroni P., Le calcul des formes lin\'eaires et les polyn\^omes orthogonaux
 semi-classiques, in Orthogonal Polynomials and their Applications ({S}egovia,
 1986), \href{http://dx.doi.org/10.1007/BFb0083367}{\textit{Lecture Notes in Math.}}, Vol.~1329, Springer, Berlin, 1988,
 279--290.

\bibitem{Ma91}
Maroni P., Une th\'eorie alg\'ebrique des polyn\^omes orthogonaux.
 {A}pplication aux polyn\^omes orthogonaux semi-classiques, in Orthogonal
 Polynomials and their Applications ({E}rice, 1990), \textit{IMACS Ann.
 Comput. Appl. Math.}, Vol.~9, Baltzer, Basel, 1991, 95--130.

\bibitem{MP16}
Mart{\'{\i}}nez C., Pi{\~n}ar M.A., Orthogonal polynomials on the unit ball and
 fourth-order partial dif\/ferential equations, \href{http://dx.doi.org/10.3842/SIGMA.2016.020}{\textit{SIGMA}} \textbf{12}
 (2016), 020, 11~pages, \href{http://arxiv.org/abs/1511.07056}{arXiv:1511.07056}.

\bibitem{M81}
Mysovskikh I.P., Interpolation cubature formulas, Nauka, Moscow, 1981.

\bibitem{SX94}
Schmid H.J., Xu Y., On bivariate {G}aussian cubature formulae, \href{http://dx.doi.org/10.2307/2160762}{\textit{Proc.
 Amer. Math. Soc.}} \textbf{122} (1994), 833--841.

\bibitem{Sz78}
Szeg\H{o} G., Orthogonal polynomials, \textit{American Mathematical Society,
 Colloquium Publications}, Vol.~23, 4th~ed., Amer. Math. Soc., Providence,
 R.I., 1975.

\bibitem{Uv69}
Uvarov V.B., The connection between systems of polynomials that are orthogonal
 with respect to dif\/ferent distribution functions, \href{http://dx.doi.org/10.1016/0041-5553(69)90124-4}{\textit{USSR Comput. Math.
 Math. Phys.}} \textbf{9} (1969), no.~6, 25--36.

\bibitem{X92}
Xu Y., Gaussian cubature and bivariate polynomial interpolation, \href{http://dx.doi.org/10.2307/2153073}{\textit{Math.
 Comp.}} \textbf{59} (1992), 547--555.

\bibitem{X94}
Xu Y., On zeros of multivariate quasi-orthogonal polynomials and {G}aussian
 cubature formulae, \href{http://dx.doi.org/10.1137/S0036141092237200}{\textit{SIAM~J. Math. Anal.}} \textbf{25} (1994),
 991--1001.

\bibitem{X96}
Xu Y., Asymptotics for orthogonal polynomials and {C}hristof\/fel functions on a
 ball, \href{http://dx.doi.org/10.4310/MAA.1996.v3.n2.a6}{\textit{Methods Appl. Anal.}} \textbf{3} (1996), 257--272.

\bibitem{Z97}
Zhedanov A., Rational spectral transformations and orthogonal polynomials,
 \href{http://dx.doi.org/10.1016/S0377-0427(97)00130-1}{\textit{J.~Comput. Appl. Math.}} \textbf{85} (1997), 67--86.

\bibitem{Z99}
Zhedanov A., A method of constructing {K}rall's polynomials, \href{http://dx.doi.org/10.1016/S0377-0427(99)00070-9}{\textit{J.~Comput.
 Appl. Math.}} \textbf{107} (1999), 1--20.

\end{thebibliography}
\end{document}